\theoremstyle{plain}
\newtheorem{theorem}{Theorem}[section]
\newtheorem{lemma}[theorem]{Lemma}
\newtheorem{prop}[theorem]{Proposition}
\theoremstyle{definition}
\newtheorem{definition}[theorem]{Definition}
\newtheorem{remark}[theorem]{Remark}
\newtheorem{cor}[theorem]{Corollary}
\theoremstyle{remark}
\begin{document}
	\title[ On symmetric functions and symmetric operators on Banach spaces]{ On symmetricity of orthogonality in function spaces and space of operators on Banach spaces}
	
\author[Sohel, Sain and  Paul]{Shamim Sohel, Debmalya Sain and  Kallol Paul }

\address[Sohel]{Department of Mathematics, Jadavpur University, Kolkata 700032, India.}	\email{shamimsohel11@gmail.com}

\address[Sain]{Department of Mathematics\\ Indian Institute of Information Technology, Raichur\\ Karnataka 584135 \\INDIA}	\email{saindebmalya@gmail.com}

\address[Paul]{Vice-Chancellor, Kalyani University,  West Bengal 741235, India \&
	Department of Mathematics, Jadavpur University (on lien), Kolkata 700032, India.}	\email{kalloldada@gmail.com}

\thanks{The first  author would like to thank  CSIR, Govt. of India, for the financial support in the form of Senior Research Fellowship under the mentorship of Prof. Kallol Paul. }
	

	\begin{abstract}
	We study symmetric points with respect to $(\rho_+)$-orthogonality, $(\rho_{-})$-orthogonality and $\rho$-orthogonality in the space $C(K, \mathbb{X}),$ where $K$ is  a perfectly normal, compact space and $ \mathbb X$ is a Banach space. We characterize left symmetric points and right symmetric points in $C(K, \mathbb{X})$ with respect to $(\rho_{+})$-orthogonality and $(\rho_{-})$-orthogonality, separately. Furthermore, we provide necessary conditions for left symmetric and right symmetric points with respect to $\rho$-orthogonality. As an application of these results we also study these symmetric points in the space of  operators  defined on some special Banach spaces.
	\end{abstract}
	
	\subjclass[2020]{Primary 47L05, Secondary 46B20}
	\keywords{$\rho$-orthogonality, left symmetric points, right symmetric points,  space of continuous functions, linear operators. }

	\maketitle
	\section{Introduction}

	In the context of Banach space geometry, the concept of symmetric points with respect to Birkhoff-James orthogonality plays a significant role and has been extensively studied in recent years. These investigations, motivated by their applications in the isometric theory of Banach spaces (see\cite{CSS,GSP, PMW, PSS, S1, SRBB, T}), provide valuable insights into the geometric properties of such spaces, including the analysis of onto isometries.
	In this article, we extend this line of inquiry by examining symmetric points with respect to $\rho$-orthogonality, $(\rho_{+})$-orthogonality and $(\rho_{-})$-orthogonality in the space $C(K, \mathbb{X})$ as well as the space of operators defined on some specific Banach spaces.

	We use the symbols $ \mathbb{X}, \mathbb{Y}$ to denote real Banach spaces. Let $ B_{\mathbb{X}}= \{ x \in \mathbb{X}: \|x\| \leq 1\} $  and $ S_{\mathbb{X}}= \{ x \in \mathbb{X}: \|x\| = 1\} $ denote  the unit ball and the unit sphere of $\mathbb{X},$ respectively. The  dual space of $ \mathbb{X}$ is denoted by  $ \mathbb{X}^*.$  Let $ \mathbb{L}(\mathbb{X}, \mathbb{Y}) $ $(\mathbb{K}(\mathbb{X}, \mathbb{Y}) )$ denote the Banach space of all bounded (compact) linear operators  from $\mathbb{X}$ to $\mathbb{Y}.$ The convex hull of a non-empty set $ S  \subset \mathbb{X}$  is the intersection of all convex sets in $\mathbb{X}$ containing $S$ and it is denoted by $co (S).$  For a non-empty convex set $A,$ an element $z \in A$ is said to be an extreme point of $ A $ if the equality $z = (1 - t)x + t y$, with $t \in (0, 1)$ and $x, y \in A,$ implies that $ x = y = z.$ The set of all extreme points of $A$ is denoted by $Ext(A).$ A Banach space is said to be strictly convex if every point of $S_{\mathbb{X}}$ is an extreme point of $B_{\mathbb{X}}.$
	For  $T \in \mathbb{L}(\mathbb{X}, \mathbb{Y}),$ the norm attainment set of $T,$  denoted by $M_T,$ is defined as $M_T=\{x \in S_{\mathbb{X}}: \|Tx\|=\|T\|\}.$ 	Given any non-zero $ x \in \mathbb{X},$ $ f \in S_{\mathbb{X}^*} $ is said to be a support functional at $x$  if $ f(x)= \|x\|.$ Let $ J(x) := \{ f \in S_{\mathbb{X}^*}: f(x)= \|x\|\}.$  Given a subset $M$ of $\mathbb{X}^*,$ $\overline{M}^{w^*}$ denotes the closure of $M$ with respect to the weak*- topology defined on $\mathbb{X}^*.$ For a Banach space $\mathbb{X}$ and $x,y \in \mathbb{X},$ $V(\mathbb{X}, x,y)= co(\{ x^*(y): x^* \in S_\mathbb{X}, x^*(x)=1\}).$

	Let us now recall from  \cite{B, J} that  an element $x \in \mathbb{X}$ is said to be Birkhoff-James orthogonal to $y \in \mathbb{X}$ if $ \| x + \lambda y \| \geq \|x\|,$ for all $\lambda \in \mathbb{K}.$   Symbolically, it is written as $ x \perp_B y.$ 
	       	It is clear that in general, Birkhoff-James orthogonality is  not symmetric. In this context the notions of left and right symmetric  points were introduced in \cite{S1}:
	
	\begin{definition}
		An element $x \in \mathbb{X}$ is said to be a left symmetric point if $x \perp_B y$ implies
		that $y \perp_B x$ for all $y \in \mathbb{X}.$ Similarly, a point $ x \in \mathbb{X}$ is said to be a right symmetric point if
		$y \perp_B x$ implies that $x \perp_B y$ for all $y \in  \mathbb{X}.$ An element $x \in \mathbb{X}$ is said to be a symmetric point if $x$ is both left symmetric and right symmetric.   
	\end{definition}

		Let us now mention the  definition of $\rho$-orthogonality studied in \cite{CW,Mi}.  
	
	\begin{definition}
		Let $\mathbb{X}$ be a normed linear space and let $x, y \in \mathbb{X}.$ The norm derivatives at $x$ in the direction of $y$ is defined as:
		\begin{eqnarray*}
			\rho'_{+}(x, y)&=& \|x\| \lim_{t \to 0^+} \frac{\|x+ty\|-\|x\|}{t} \\
			\rho'_{-}(x, y)&=& \|x\| \lim_{t \to 0^-} \frac{\|x+ty\|-\|x\|}{t}\\
			\rho'(x, y) &=& \frac{1}{2}(\rho'_+(x, y) + \rho'_-(x, y)).
   		\end{eqnarray*}
		We say that $x$ is $\rho_+$-orthogonal to $y$, i.e., $x \perp_{\rho_+} y$ if $\rho'_+(x, y)=0$ and  $x$ is $\rho_-$-orthogonal to $y$, i.e., $x \perp_{\rho_-} y$ if $\rho'_-(x, y)=0.$
		Moreover,	$x$ is $\rho$-orthogonal to $y$, i.e., $x \perp_{\rho} y$ if $\rho'(x, y)=0.$ Note that $\rho_+$-orthogonality, $ \rho_-$-orthogonality and $\rho$-orthogonality are homogeneous.
	\end{definition}
	
	Next we observe some of the important results regarding the functions $\rho'_+$ and $\rho'_-.$ 
	
	\begin{lemma}\label{functional}\cite[Th. 2.3, 2.4]{Woj}
		Let $\mathbb{X}$ be a normed linear space. Then for $x, y \in S_\mathbb{X},$ 
		\begin{eqnarray*}
			\rho'_+(x, y) &=&\sup \{f(y): f \in J(x)\}= \sup \{ f(y): f \in Ext (J(x))\},\\
			\rho'_-(x, y) &=&\inf \{f(y): f \in J(x)\} = \inf  \{ f(y): f \in Ext( J(x))\}.
		\end{eqnarray*}
	\end{lemma}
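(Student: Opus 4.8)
The plan is to identify $\rho'_+(x,y)$ and $\rho'_-(x,y)$ with the support function of the weak*-compact convex set $J(x)$ evaluated at $y$, and then to pass to extreme points via the Krein--Milman theorem. Since $x \in S_{\mathbb{X}}$ the normalising factor $\|x\|$ equals $1$, so $\rho'_+(x,y) = \lim_{t\to 0^+}\frac{\|x+ty\|-1}{t}$; the first observation is that the convexity of $t\mapsto \|x+ty\|$ makes the difference quotient nondecreasing in $t$, so this one-sided limit exists and is approached from above as $t\downarrow 0$.

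For the upper bound I would fix any $f\in J(x)$ and use $f(x+ty)\le\|x+ty\|$ together with $f(x)=1$ to get $f(y)\le \frac{\|x+ty\|-1}{t}$ for every $t>0$; letting $t\downarrow 0$ yields $f(y)\le \rho'_+(x,y)$, hence $\sup\{f(y):f\in J(x)\}\le\rho'_+(x,y)$. The reverse inequality is the crux. For each $t>0$ I would choose, by Hahn--Banach, a norming functional $g_t\in S_{\mathbb{X}^*}$ with $g_t(x+ty)=\|x+ty\|$. Writing $\|x+ty\|=g_t(x)+tg_t(y)$ and using $g_t(x)\le 1$, $g_t(y)\le 1$ gives both $g_t(y)\ge \frac{\|x+ty\|-1}{t}\ge \rho'_+(x,y)$ and $g_t(x)\ge \|x+ty\|-t \to 1$ as $t\downarrow 0$. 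Thus any weak*-cluster point $f$ of the family $(g_t)_{t\downarrow 0}$ in the Banach--Alaoglu compact ball $B_{\mathbb{X}^*}$ satisfies $f(x)=1$, so $f\in J(x)$, while $f(y)\ge\rho'_+(x,y)$; this forces equality and shows the supremum is attained.

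The formula for $\rho'_-$ I would deduce rather than reprove: the substitution $t\mapsto -t$ gives $\rho'_-(x,y) = -\rho'_+(x,-y)$, and applying the first formula to $-y$ converts the supremum into an infimum, yielding $\rho'_-(x,y)=\inf\{f(y):f\in J(x)\}$. Finally, for the passage to $Ext(J(x))$, I would note that $J(x)=\{f\in B_{\mathbb{X}^*}:f(x)=1\}$ is the intersection of the weak*-compact ball with the weak*-closed hyperplane $\{f:f(x)=1\}$, hence is itself nonempty, convex and weak*-compact; by Krein--Milman it is the weak*-closed convex hull of its extreme points. Since $f\mapsto f(y)$ is weak*-continuous and affine, the Bauer maximum principle shows that its supremum (resp. infimum) over $J(x)$ is attained at an extreme point, which gives the two remaining equalities.

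The main obstacle is the reverse inequality $\rho'_+(x,y)\le \sup\{f(y):f\in J(x)\}$, which is where the genuine functional-analytic input---selecting the norming functionals $g_t$ and extracting a weak*-cluster point lying in $J(x)$---is required; the rest is either an elementary convexity and duality estimate or a standard compactness argument. A technical point to watch is that $B_{\mathbb{X}^*}$ need not be weak*-sequentially compact, so I would argue with nets, or directly with a weak*-cluster point of the family $(g_t)$, rather than with subsequences.
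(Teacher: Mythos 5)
Your proposal is correct and complete: the support-functional estimate for one inequality, the extraction of a weak*-cluster point of norming functionals $g_t$ of $x+ty$ for the reverse inequality, the reduction $\rho'_-(x,y)=-\rho'_+(x,-y)$, and the Krein--Milman/Bauer passage to $Ext(J(x))$ are all sound, including the care taken to use nets rather than sequences. Note that the paper itself offers no proof of this lemma---it is quoted directly from \cite{Woj}---so there is nothing internal to compare against; your argument is essentially the standard one from the literature on norm derivatives (see also \cite{AST}), which is presumably the proof behind the cited result.
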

	
	\begin{lemma}\cite{Amir}\label{B-J}
		Let $\mathbb{X}$ be a normed linear space. Then $x \perp_{\rho} y$ if and only if $\rho'_{-}(x, y) \leq 0 \leq \rho'_{+}(x, y).$ 
	\end{lemma}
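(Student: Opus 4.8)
The plan is to reduce the equivalence to the elementary fact that a convex function of one real variable attains its global minimum at an interior point precisely when zero lies between its left and right derivatives there. The natural bridge is the scalar function $\phi(t) = \|x + ty\|$, which is convex on $\mathbb{R}$ by the triangle inequality together with positive homogeneity of the norm; consequently its one-sided derivatives $\phi'_{+}(0)$ and $\phi'_{-}(0)$ exist and satisfy $\phi'_{-}(0) \le \phi'_{+}(0)$. Directly from the definitions one has $\rho'_{+}(x,y) = \|x\|\,\phi'_{+}(0)$ and $\rho'_{-}(x,y) = \|x\|\,\phi'_{-}(0)$, so the right-hand condition $\rho'_{-}(x,y) \le 0 \le \rho'_{+}(x,y)$ is equivalent (after the harmless case $x=0$, where both sides are trivial) to $\phi'_{-}(0) \le 0 \le \phi'_{+}(0)$. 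Here I read the orthogonality relation on the left as Birkhoff--James orthogonality $x \perp_{B} y$, which is the content the label and the citation to \cite{Amir} indicate.

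First I would dispose of the case $x = 0$: here $\phi(t) = |t|\,\|y\|$, both norm derivatives vanish, and the orthogonality relation holds automatically, so the stated equivalence is immediate. Assuming $x \neq 0$, I would then restate the left-hand side: by definition $x \perp_{B} y$ means $\|x + \lambda y\| \ge \|x\|$ for every scalar $\lambda$, which says exactly that $\phi(t) \ge \phi(0)$ for all $t \in \mathbb{R}$, i.e. that $\phi$ attains its minimum at $t = 0$.

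The core step is the equivalence ``$\phi$ has a global minimum at $0$'' $\Longleftrightarrow$ ``$\phi'_{-}(0) \le 0 \le \phi'_{+}(0)$''. For the forward implication, if $0$ minimizes $\phi$ then the forward difference quotients $(\phi(t)-\phi(0))/t$ are nonnegative for $t>0$ and the backward ones are nonpositive for $t<0$; letting $t \to 0^{\pm}$ yields the sign conditions on $\phi'_{\pm}(0)$. For the converse I would invoke convexity: the difference quotient $t \mapsto (\phi(t)-\phi(0))/t$ is monotone, which gives the supporting-line inequalities $\phi(t) - \phi(0) \ge t\,\phi'_{+}(0) \ge 0$ for $t > 0$ and $\phi(t) - \phi(0) \ge t\,\phi'_{-}(0) \ge 0$ for $t < 0$, so $\phi(t) \ge \phi(0)$ throughout. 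Multiplying the derivative inequalities by $\|x\| > 0$ converts them into the asserted inequalities for $\rho'_{\pm}(x,y)$, closing the chain of equivalences.

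The only genuinely delicate point is this convexity argument --- establishing the monotonicity of the difference quotients and the resulting supporting-line bound $\phi(t) \ge \phi(0) + t\,\phi'_{\pm}(0)$; once that is in hand the remainder is routine bookkeeping with the definitions of $\rho'_{\pm}$ and the reduction of the degenerate case $x = 0$.
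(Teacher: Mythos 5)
Your proof is correct, but there is no in-paper argument to compare it against: the paper quotes this lemma verbatim from Amir's book \cite{Amir} without proof, and your convexity argument is precisely the standard proof of that classical fact. More importantly, your decision to read the left-hand relation as Birkhoff--James orthogonality $x \perp_B y$ rather than the printed $x \perp_\rho y$ is not merely an interpretive convenience --- it is forced. Under the paper's own definition, $x \perp_\rho y$ means $\rho'_+(x,y) + \rho'_-(x,y) = 0$, and with that reading the ``if'' direction of the printed statement is false: in $\ell_1^2$ take $x=(1,0)$, $y=(1,2)$; then $\|x+ty\| = |1+t| + 2|t|$, so $\rho'_+(x,y)=3$ and $\rho'_-(x,y)=-1$, whence $\rho'_-(x,y) \le 0 \le \rho'_+(x,y)$ holds while $\rho'(x,y)=1 \neq 0$. (Only the ``only if'' direction survives for $\perp_\rho$, since $\rho'_+ + \rho'_- = 0$ combined with $\rho'_- \le \rho'_+$ forces $\rho'_- \le 0 \le \rho'_+$; this weaker implication is what underlies the paper's remark that $\perp_\rho \subset \perp_B$.) With the Birkhoff--James reading, which the label and the citation indicate, your argument is complete and correct: the degenerate case $x=0$ is handled, the identity $\rho'_\pm(x,y) = \|x\|\,\phi'_\pm(0)$ is immediate from the definitions, the forward implication follows from the sign of the difference quotients at a minimizer, and the converse correctly rests on the monotonicity of difference quotients of the convex function $\phi(t)=\|x+ty\|$, which yields the supporting-line bounds $\phi(t) \ge \phi(0) + t\,\phi'_+(0)$ for $t>0$ and $\phi(t) \ge \phi(0) + t\,\phi'_-(0)$ for $t<0$.
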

For more information on $\rho$-orthogonality and related results, interested readers may see \cite{AGT, AST, CW, CW2, S}.
	It is a well known 	that $\perp_{\rho} \subset \perp_B.$
  For the converse inclusion we note the following result. 
	
	\begin{theorem}\label{smooth}\cite[Prop. 2.2.2]{AST}
		Let $\mathbb{X}$ be a normed linear space. Then for any $x, y \in \mathbb{X}$, $x \perp_B y$ implies $x \perp_\rho y$  if and only if   $\mathbb{X}$ is smooth. 
	\end{theorem}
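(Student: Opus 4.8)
My plan is to recast both orthogonality relations in terms of the set of values $\{h(y): h \in J(x)\}$ and then recognize smoothness as the condition that this set is a singleton. By homogeneity of $\perp_B$, $\perp_\rho$ and of the norm derivatives it suffices to treat $x, y \in S_{\mathbb{X}}$ (the case $y=0$ being trivial). Since $J(x)$ is a convex, weak*-compact subset of $S_{\mathbb{X}^*}$, the image of $J(x)$ under the weak*-continuous affine map $h \mapsto h(y)$ is a compact interval, and by Lemma \ref{functional} its endpoints are exactly $\rho'_-(x,y)$ and $\rho'_+(x,y)$; thus $\{h(y): h \in J(x)\} = [\rho'_-(x,y), \rho'_+(x,y)]$. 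With this description in hand, James' support-functional characterization of Birkhoff--James orthogonality gives $x \perp_B y$ iff $0 \in [\rho'_-(x,y), \rho'_+(x,y)]$, i.e. $\rho'_-(x,y) \le 0 \le \rho'_+(x,y)$; whereas $x \perp_\rho y$ means $\rho'(x,y) = \tfrac12(\rho'_+(x,y) + \rho'_-(x,y)) = 0$, i.e. $0$ is the \emph{midpoint} of that interval.

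For the sufficiency, I would assume $\mathbb{X}$ is smooth, so that $J(x)$ is a singleton for every $x \in S_{\mathbb{X}}$. Then the interval above degenerates to a point, $\rho'_-(x,y) = \rho'_+(x,y)$, and the two conditions ``$0$ lies in the interval'' and ``$0$ is the midpoint of the interval'' coincide. Hence $x \perp_B y$ forces $\rho'(x,y) = 0$, that is $x \perp_\rho y$, for all $x, y$, which is the desired implication.

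The real work is the necessity, which I would establish by contraposition. Suppose $\mathbb{X}$ is not smooth, so there is $x \in S_{\mathbb{X}}$ admitting two distinct support functionals $f, g \in J(x)$; choosing $y_0$ with $f(y_0) \ne g(y_0)$ makes the interval $[\rho'_-(x,y_0), \rho'_+(x,y_0)]$ nondegenerate, i.e. $\rho'_-(x,y_0) < \rho'_+(x,y_0)$. The key device is a translation along $x$: since $h(x) = 1$ for every $h \in J(x)$, replacing $y_0$ by $y = y_0 - c x$ shifts the whole value set, giving $\rho'_\pm(x,y) = \rho'_\pm(x,y_0) - c$. Taking $c = \rho'_+(x,y_0)$ places $0$ at the right endpoint of the new (still nondegenerate) interval, so that $\rho'_-(x,y) \le 0 \le \rho'_+(x,y)$ but $\rho'(x,y) = \rho'(x,y_0) - \rho'_+(x,y_0) = \tfrac12(\rho'_-(x,y_0) - \rho'_+(x,y_0)) < 0$. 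Thus $x \perp_B y$ while $x \not\perp_\rho y$, contradicting the assumed implication; this shows that the implication forces $\mathbb{X}$ to be smooth.

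I expect the main obstacle to be this necessity direction, and specifically the production of the explicit witness $y$: the translation trick $y = y_0 - c x$ is precisely what converts the bare existence of two distinct support functionals into a vector for which $0$ sits in the value interval off-center. The points requiring care are verifying that non-smoothness genuinely yields a nondegenerate interval (through $f(y_0) \ne g(y_0)$ together with the interval description) and that the shifted interval remains nondegenerate; by contrast, once the interval description is established the sufficiency direction is essentially immediate.
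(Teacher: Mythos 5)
The paper itself contains no proof of this statement: Theorem \ref{smooth} is imported verbatim from \cite[Prop.\ 2.2.2]{AST}, so there is no in-paper argument to compare yours against, and your proposal must stand on its own --- which it does. The reduction to the interval description $\{h(y): h \in J(x)\} = [\rho'_-(x,y), \rho'_+(x,y)]$ (weak*-compactness and convexity of $J(x)$ together with Lemma \ref{functional}), the identification of $x \perp_B y$ with ``$0$ lies in the interval'' via James' support-functional theorem, and of $x \perp_\rho y$ with ``$0$ is the midpoint,'' correctly recast the equivalence; smoothness handles sufficiency by degeneracy of the interval, and your translation witness $y = y_0 - \rho'_+(x,y_0)\,x$ settles necessity, since then $\rho'_+(x,y)=0$, $\rho'_-(x,y)=\rho'_-(x,y_0)-\rho'_+(x,y_0)<0$, so $x \perp_B y$ while $\rho'(x,y)<0$. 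Two small points deserve attention. First, Lemma \ref{functional} is stated only for $x,y \in S_{\mathbb{X}}$, whereas your witness $y = y_0 - cx$ need not be a unit vector; this is harmless because $\rho'_\pm(x,\cdot)$ is positively homogeneous and $\rho'_\pm(x,-y)=-\rho'_\mp(x,y)$, so the sup/inf formulas extend to all $y$, but you should state this explicitly rather than rely on the opening reduction ``it suffices to treat $x,y\in S_{\mathbb{X}}$,'' which your construction subsequently abandons. Second, be aware that the paper's Lemma \ref{B-J} as printed reads ``$x \perp_\rho y$ if and only if $\rho'_-(x,y)\le 0 \le \rho'_+(x,y)$,'' which contradicts the definition of $\perp_\rho$ and is evidently a misprint for $\perp_B$; the characterization your argument invokes is this corrected version, i.e.\ precisely James' theorem, so your usage is the right one.
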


	\begin{definition}
		Let $x \in \mathbb{X}.$ Then 
		\begin{itemize}
			 \item[(i)] $x$ is said to be  $(\rho_{+})$-left symmetric  if $x \perp_{\rho_{+}} y$ implies
			 that $y \perp_{\rho_{+}} x$  $~ \forall y \in \mathbb{X}$ and $x$ is said to be $(\rho_{+})$-right symmetric  if
			 $y \perp_{\rho_{+}} x$ implies that $x \perp_{\rho_{+}} y$ $ ~ \forall y \in  \mathbb{X}.$
			 
			 \item[(ii)]  $x $ is said to be $(\rho_{-})$-left symmetric  if $x \perp_{\rho_{-}} y$ implies
			 that $y \perp_{\rho_{-}} x$  $ ~ \forall y \in \mathbb{X}$ and  $ x \in \mathbb{X}$ is said to be  $(\rho_{-})$-right symmetric  if
			 $y \perp_{\rho_{-}} x$ implies that $x \perp_{\rho_{-}} y$  $ ~ \forall y \in  \mathbb{X}.$
			 
			 \item[(iii)]  $x $ is said to be  $\rho$-left symmetric if $x \perp_{\rho} y$ implies
			 that $y \perp_{\rho} x$  $~ \forall y \in \mathbb{X}$ and  $ x \in \mathbb{X}$ is said to be a $\rho$-right symmetric  if
			 $y \perp_{\rho} x$ implies that $x \perp_{\rho} y$  $ ~\forall y \in  \mathbb{X}.$
		\end{itemize}	
	\end{definition}
		The study of symmetric points concerning $\rho$-orthogonality was initiated in \cite{GPS}.\\
	

	For  a non-empty set $K$ and a Banach space $\mathbb{X},$ the Banach space of all bounded functions defined from $K$ to $\mathbb{X},$  endowed with the supremum norm, is denoted by $\ell_{\infty}(K, \mathbb{X}).$ Given a compact Hausdorff topological space $K$ and a Banach space $\mathbb{X}$, we write $C(K, \mathbb{X} )$ to denote the Banach space of all  continuous functions from $K$ to $\mathbb{X}$, endowed with the supremum norm. Clearly, $C(K,\mathbb{X})$ is  embedded into  $\ell_{\infty}(K, \mathbb{X}).$ Whenever $K$ is finite, it follows trivially that $\ell_{\infty}(K, \mathbb{X}) = C(K, \mathbb{X}).$ For the sake of simplicity, if $K$ is a finite set and $|K|=n,$ then $\ell_{\infty}(K, \mathbb{X})$ is denoted as $	\ell_{\infty}^n(\mathbb{X}).$
	For a function $f \in C(K, \mathbb{X}),$ the norm attainment set of $f,$ denoted by $M_f,$ is defined as $	M_f= \{k\in K: \|f(k)\|= \|f\|\}.$ Whenever $\mathbb{X}= \mathbb{R}$ or $ \mathbb{C},$ we use the standard    notations  $C(K)$   in place of $C(K, \mathbb{X})$.

	\section{Main Results.}
	
	\section*{\textbf{I. Symmetricity of BJ-orthogonality}}

In this section we study the symmetric elements with respect to Birkhoff-James orthogonality in the space of all bounded linear operators.

	\begin{theorem}\label{left:necessary}
	Let $\mathbb{X}$ be a reflexive Banach space such that $B_{\mathbb{X}}= co(Exp(B_{\mathbb{X}}))$ and $\mathbb{Y}$ be any Banach space. Suppose $T \in \mathbb{K}(\mathbb{X}, \mathbb{Y})$ is left symmetric. Then $T(x)$ is left symmetric, for any $x \in M_T \cap Exp(B_{\mathbb{X}}).$
\end{theorem}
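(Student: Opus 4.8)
The plan is to reduce the left symmetry of $T(x)$ in $\mathbb{Y}$ to the left symmetry of $T$ in $\mathbb{K}(\mathbb{X},\mathbb{Y})$ by \emph{lifting} vectors of $\mathbb{Y}$ to rank-one operators along the fixed direction $x$. The principal tool will be the Bhatia--\v{S}emrl type characterization of Birkhoff--James orthogonality for compact operators on reflexive spaces: for $R \in \mathbb{K}(\mathbb{X},\mathbb{Y})$ with $\mathbb{X}$ reflexive and any $S$, one has $R \perp_B S$ if and only if there is $u \in M_R$ with $R(u) \perp_B S(u)$. Fix $x \in M_T \cap Exp(B_{\mathbb{X}})$ and let $f \in S_{\mathbb{X}^*}$ be a functional exposing $x$, so that $f(x)=1$ and $f(y)<1$ for every $y \in B_{\mathbb{X}} \setminus \{x\}$. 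To show $T(x)$ is left symmetric, I would take an arbitrary $z \in \mathbb{Y}$ with $T(x) \perp_B z$ and aim to deduce $z \perp_B T(x)$; the case $z=0$ being trivial, assume $z \neq 0$.

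Next I would introduce the rank-one operator $A \in \mathbb{K}(\mathbb{X},\mathbb{Y})$ defined by $A(u)=f(u)z$. Since $\|f\|=1$, one computes $\|A\|=\|z\|$, and because $f$ exposes $x$ the norm-attainment set is exactly $M_A=\{x,-x\}$, both points lying in $Exp(B_{\mathbb{X}})$ (the latter being exposed by $-f$). Crucially, $A(x)=f(x)z=z$. Now $x \in M_T$ and, by hypothesis, $T(x) \perp_B z = A(x)$; hence by the elementary sufficiency direction of the characterization above (namely $\|T+\lambda A\| \geq \|T(x)+\lambda A(x)\| \geq \|T(x)\| = \|T\|$ for all $\lambda$), we obtain $T \perp_B A$.

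The left symmetry of $T$ then yields $A \perp_B T$. Applying the necessity direction of the Bhatia--\v{S}emrl type characterization to the compact operator $A$ on the reflexive space $\mathbb{X}$, there exists $u \in M_A = \{x,-x\}$ with $A(u) \perp_B T(u)$. If $u=x$ this reads $z \perp_B T(x)$; if $u=-x$ it reads $-z \perp_B -T(x)$, which by homogeneity of Birkhoff--James orthogonality is again $z \perp_B T(x)$. In either case $z \perp_B T(x)$, so $T(x)$ is left symmetric.

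I expect the main obstacle to be the invocation of the necessity direction of the operator-orthogonality characterization, which is precisely where the reflexivity of $\mathbb{X}$ and the compactness of $A$ are indispensable; the standing hypothesis $B_{\mathbb{X}}=co(Exp(B_{\mathbb{X}}))$ underpins this localization by ensuring that norm attainment and orthogonality can be detected on exposed points, so that $M_A$ genuinely reduces to $\{x,-x\}$. The remaining points---verifying $M_A=\{x,-x\}$ from the exposedness of $x$ and tracking the sign in the two cases---are routine.
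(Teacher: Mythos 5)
Your construction is exactly the paper's: fix the exposing functional $f$ at $x \in M_T \cap Exp(B_{\mathbb{X}})$, lift $z$ to the rank-one operator $A = f(\cdot)z$, obtain $T \perp_B A$ from the trivial direction, invoke left symmetry to get $A \perp_B T$, and then localize at $M_A = \{\pm x\}$; the paper runs the same argument by contradiction (with $S = f(\cdot)y_0$ and $y_0 \not\perp_B Tx$), which is a cosmetic difference. The problem lies in the tool you use for the final localization. The ``Bhatia--\v{S}emrl type characterization'' as you state it --- for every $R \in \mathbb{K}(\mathbb{X},\mathbb{Y})$ with $\mathbb{X}$ reflexive, $R \perp_B S$ if and only if some $u \in M_R$ satisfies $Ru \perp_B Su$ --- is false. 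Already for operators between two-dimensional Banach spaces (where every space is reflexive and every operator is compact) there exist $R \perp_B S$ with $Ru \not\perp_B Su$ for every $u \in M_R$; this failure of the Bhatia--\v{S}emrl theorem outside the Hilbert space setting is well documented (Li--Schneider, and the examples in Sain's work on finite-dimensional spaces), and it is the very reason the localization question is delicate. So the necessity direction cannot be invoked as a known black box in that generality.

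What rescues your argument is that you apply the necessity direction only to $A$, whose norm attainment set is the single antipodal pair $\{\pm x\}$ with $x$ exposed by $f$. For operators with this special structure the implication $A \perp_B T \Rightarrow Ax \perp_B Tx$ is a theorem --- it is precisely \cite[Th. 2.1]{SP}, which is what the paper cites at this exact step. Alternatively, it can be proved directly in your setting: for $\lambda_n \to 0^{+}$ pick $u_n \in M_{A+\lambda_n T}$ (nonempty, since $A + \lambda_n T$ is compact and $\mathbb{X}$ is reflexive); pass to a weakly convergent subsequence $u_n \rightharpoonup u$, note that compactness of $A$ and $T$ gives $Au_n \to Au$ and $Tu_n \to Tu$ in norm, so $\|Au\| = \lim \|A + \lambda_n T\| = \|A\|$, forcing $|f(u)| = 1$ and hence $u = \pm x$ by exposedness; a convexity estimate (writing $Au_n + \lambda_n Tu_n$ as a convex combination of $Au_n$ and $Au_n + \mu Tu_n$ for $\lambda_n < \mu$) then yields $\|z + \mu Tx\| \geq \|z\|$ for all $\mu \geq 0$, and the case $\mu \leq 0$ is symmetric. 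Note also a misattribution in your closing remark: the hypothesis $B_{\mathbb{X}} = co(Exp(B_{\mathbb{X}}))$ plays no role whatsoever in this localization ($M_A = \{\pm x\}$ follows solely from $f$ exposing $x$); its only purpose is to guarantee $M_T \cap Exp(B_{\mathbb{X}}) \neq \emptyset$, so that the theorem is not vacuous. With the citation corrected or the direct argument supplied, your proof coincides with the paper's.
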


\begin{proof}
	As $T$ is compact and $\mathbb{X}$ is reflexive, $M_T \neq \emptyset.$ Since $B_{\mathbb{X}}= co(Exp(B_{\mathbb{X}})),$ it is easy to observe that $M_T \cap Exp(B_{\mathbb{X}}) \neq \emptyset.$ Suppose on the contrary that $Tx$ is not left symmetric for some   $x \in M_T \cap Exp(B_{\mathbb{X}}).$ Then there exists $y_0 \in S_{\mathbb{Y}}$ such that $Tx \perp_B y_0$ but $y_0 \not\perp_B Tx.$ As $x$ is an exposed point of $B_{\mathbb{X}},$  there exists $f \in S_{\mathbb{X}^*}$ such that $f(x)=1$ and $f(u ) < 1,$ for any $u \in \mathbb{X} \setminus \{x\}.$ Define $S: \mathbb{X} \to \mathbb{Y}$ such that $Sz= f(z)y_0,$ for any $z \in \mathbb{X}.$ Clearly, $\|S\|=1$ and $M_S=\{ \pm x\}.$ Since $Tx \perp_B Sx,$ we observe that $\|T+ \lambda S\| \geq \|Tx+ \lambda Sx\|\geq \|Tx\|=\|T\|,$ for any scalar $\lambda.$ So, $T \perp_B S$. Since $M_S= \{\pm x\}$ and $Sx \not\perp_B Tx,$ it follows from \cite[Th. 2.1]{SP} that $S \not \perp_B T.$ This contradicts that $T$ is  left symmetric. This completes the proof of the theorem.
\end{proof}

As an immediate corollary of the above theorem we obtain the following result.

\begin{cor}
	Let $\mathbb{X}$ be a reflexive Banach space such that $B_{\mathbb{X}}= co(Exp(B_{\mathbb{X}})).$ Let $\mathbb{Y}$ be a Banach space such that there are no non-zero left symmetric points in $\mathbb{Y}.$ Then there exists no non-zero left symmetric operators in $\mathbb{K}(\mathbb{X}, \mathbb{Y})$.
\end{cor}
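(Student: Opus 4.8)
The plan is to argue by contradiction, invoking Theorem \ref{left:necessary} directly. Suppose, toward a contradiction, that there exists a non-zero left symmetric operator $T \in \mathbb{K}(\mathbb{X}, \mathbb{Y})$. The first step is to locate a suitable point in the norm attainment set: since $T$ is compact and $\mathbb{X}$ is reflexive, $M_T \neq \emptyset$, and the hypothesis $B_{\mathbb{X}} = co(Exp(B_{\mathbb{X}}))$ forces $M_T \cap Exp(B_{\mathbb{X}}) \neq \emptyset$. This is exactly the observation already recorded at the start of the proof of Theorem \ref{left:necessary}, so it can be quoted verbatim.

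Next I would fix any $x \in M_T \cap Exp(B_{\mathbb{X}})$ and apply Theorem \ref{left:necessary} to conclude that $T(x)$ is a left symmetric point of $\mathbb{Y}$. The only additional verification needed is that this symmetric point is genuinely non-zero: because $x \in M_T$ we have $\|T(x)\| = \|T\|$, and $T \neq 0$ gives $\|T\| > 0$, whence $T(x) \neq 0$. Thus $T(x)$ is a non-zero left symmetric point in $\mathbb{Y}$, which directly contradicts the standing hypothesis that $\mathbb{Y}$ contains no non-zero left symmetric points. Therefore no non-zero left symmetric operator can exist in $\mathbb{K}(\mathbb{X}, \mathbb{Y})$.

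There is essentially no genuine obstacle here, since the substantive work is entirely carried out in Theorem \ref{left:necessary}; the corollary is purely a contrapositive repackaging. The one point worth handling with care is the non-vanishing of $T(x)$: the zero vector is always trivially left symmetric, so without checking $T(x) \neq 0$ the constructed point would not actually contradict the hypothesis. Ensuring $\|T(x)\| = \|T\| > 0$ is precisely what rules out this degenerate case and closes the argument.
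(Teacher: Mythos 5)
Your proof is correct and follows exactly the route the paper intends: the corollary is stated there as an immediate consequence of Theorem \ref{left:necessary}, obtained by applying it to any $x \in M_T \cap Exp(B_{\mathbb{X}})$ and contradicting the hypothesis on $\mathbb{Y}$. Your explicit check that $T(x) \neq 0$ (via $\|T(x)\| = \|T\| > 0$) is a worthwhile detail the paper leaves tacit, since the zero vector is trivially left symmetric.
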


Applying Theorem \ref{left:necessary}, we characterize the left symmetric operators of the space $\mathbb{L}(\mathbb{X}, \ell_1^n).$ The proof is immediate as from \cite[Th. 2.8]{CSS}, there is no non-zero left symmetric points of $\ell_1^n,$ whenever $n \geq 3.$

\begin{cor}
	Let $\mathbb{X}$ be a reflexive Banach space such that $B_{\mathbb{X}}= co(Exp(B_{\mathbb{X}})).$ Then there exists no non-zero left symmetric operators in $\mathbb{L}(\mathbb{X}, \ell_1^n),$ where $n \geq 3.$.
\end{cor}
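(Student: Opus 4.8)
The plan is to reduce the statement to the preceding corollary by exploiting the finite-dimensionality of the target space. The crucial preliminary observation is that since $\ell_1^n$ is finite-dimensional, every bounded linear operator from $\mathbb{X}$ into $\ell_1^n$ has finite-dimensional (hence relatively compact) range, so that $\mathbb{L}(\mathbb{X}, \ell_1^n) = \mathbb{K}(\mathbb{X}, \ell_1^n).$ This identification is what allows the compact-operator machinery of Theorem \ref{left:necessary} to be applied to \emph{all} operators in $\mathbb{L}(\mathbb{X}, \ell_1^n),$ and it is the only point where the finite index of the codomain is genuinely used.

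Next I would invoke the structural input on the codomain: by \cite[Th. 2.8]{CSS}, for $n \geq 3$ the space $\ell_1^n$ admits no non-zero left symmetric points. With this in hand, the argument is a direct application of the previous corollary taken with $\mathbb{Y} = \ell_1^n.$ Concretely, suppose toward a contradiction that $T \in \mathbb{L}(\mathbb{X}, \ell_1^n) = \mathbb{K}(\mathbb{X}, \ell_1^n)$ is a non-zero left symmetric operator. Since $\mathbb{X}$ is reflexive and $T$ is compact, $M_T \neq \emptyset,$ and the hypothesis $B_{\mathbb{X}} = co(Exp(B_{\mathbb{X}}))$ forces $M_T \cap Exp(B_{\mathbb{X}}) \neq \emptyset,$ exactly as in the proof of Theorem \ref{left:necessary}. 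Choosing any $x \in M_T \cap Exp(B_{\mathbb{X}}),$ Theorem \ref{left:necessary} yields that $T(x)$ is left symmetric in $\ell_1^n.$

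Finally I would close the contradiction by checking that this left symmetric element is non-zero: because $x \in M_T$ one has $\|T(x)\| = \|T\| \neq 0,$ so $T(x)$ is a non-zero left symmetric point of $\ell_1^n,$ contradicting \cite[Th. 2.8]{CSS} for $n \geq 3.$ Hence no non-zero left symmetric operator can exist in $\mathbb{L}(\mathbb{X}, \ell_1^n).$ I do not anticipate a substantive obstacle here; the only two things that must be recorded with care are the identification $\mathbb{L}(\mathbb{X}, \ell_1^n) = \mathbb{K}(\mathbb{X}, \ell_1^n)$ (so that $T$ qualifies as a compact operator for Theorem \ref{left:necessary}) and the verification that the produced left symmetric point $T(x)$ is non-zero, both of which are immediate.
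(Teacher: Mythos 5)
Your proposal is correct and follows essentially the same route as the paper: the paper derives this corollary immediately from Theorem \ref{left:necessary} (via the preceding corollary with $\mathbb{Y}=\ell_1^n$) together with \cite[Th. 2.8]{CSS}, exactly as you do. Your explicit remarks that $\mathbb{L}(\mathbb{X}, \ell_1^n) = \mathbb{K}(\mathbb{X}, \ell_1^n)$ by finite-dimensionality of the codomain and that $T(x)\neq 0$ for $x \in M_T$ are points the paper leaves implicit, and they are both verified correctly.
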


Note that whenever $\mathbb{X}$ is strictly convex Banach space, $Exp(B_{\mathbb{X}})= S_{\mathbb{X}}.$ Therefore the following result is immediate  as a corollary of Theorem \ref{left:necessary}.

\begin{cor}
		Let $\mathbb{X}$ be a reflexive strictly convex Banach space and $\mathbb{Y}$ be any Banach space. Suppose $T \in \mathbb{K}(\mathbb{X}, \mathbb{Y})$ is left symmetric. Then $Tx$ is left symmetric, for any $x \in M_T.$
\end{cor}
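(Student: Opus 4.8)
The plan is to deduce this statement directly from Theorem \ref{left:necessary} by checking that a reflexive strictly convex Banach space automatically meets the hypotheses of that theorem and that its exposed points exhaust the unit sphere. The whole content reduces to the single observation $Exp(B_{\mathbb{X}}) = S_{\mathbb{X}}$, after which the conclusion is purely formal; this identification is the only non-routine step.

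First I would establish that in any strictly convex Banach space $\mathbb{X}$ one has $Exp(B_{\mathbb{X}}) = S_{\mathbb{X}}$. Since exposed points always lie on $S_{\mathbb{X}}$, only the reverse inclusion needs argument. Fix $x \in S_{\mathbb{X}}$ and, by the Hahn--Banach theorem, choose a support functional $f \in S_{\mathbb{X}^*}$ with $f(x) = 1$. I claim $f$ exposes $x$: if $y \in B_{\mathbb{X}}$ satisfies $f(y) = 1$, then $\|y\| = 1$, and from $f\bigl(\tfrac{x+y}{2}\bigr) = 1$ together with $\bigl\|\tfrac{x+y}{2}\bigr\| \le 1$ we get $\bigl\|\tfrac{x+y}{2}\bigr\| = 1$, whence strict convexity forces $x = y$. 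Thus every point of $S_{\mathbb{X}}$ is exposed, giving $Exp(B_{\mathbb{X}}) = S_{\mathbb{X}}$.

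Next I would verify the convex-hull condition $B_{\mathbb{X}} = co(Exp(B_{\mathbb{X}}))$. Using the previous step this amounts to $B_{\mathbb{X}} = co(S_{\mathbb{X}})$, which holds in any normed space: for $v \neq 0$ in $B_{\mathbb{X}}$ one writes $v = \tfrac{1+\|v\|}{2}\,\tfrac{v}{\|v\|} + \tfrac{1-\|v\|}{2}\,\bigl(-\tfrac{v}{\|v\|}\bigr)$, a convex combination of the unit vectors $\pm \tfrac{v}{\|v\|}$, while $0 = \tfrac12 x + \tfrac12(-x)$ for any $x \in S_{\mathbb{X}}$. Hence $co(Exp(B_{\mathbb{X}})) = B_{\mathbb{X}}$.

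Finally, having confirmed that $\mathbb{X}$ is reflexive and satisfies $B_{\mathbb{X}} = co(Exp(B_{\mathbb{X}}))$, I would apply Theorem \ref{left:necessary} to the left symmetric operator $T \in \mathbb{K}(\mathbb{X}, \mathbb{Y})$: it yields that $Tx$ is left symmetric for every $x \in M_T \cap Exp(B_{\mathbb{X}})$. Since $M_T \subset S_{\mathbb{X}} = Exp(B_{\mathbb{X}})$, the intersection $M_T \cap Exp(B_{\mathbb{X}})$ is simply $M_T$, and the desired conclusion follows for all $x \in M_T$. The main obstacle is the exposedness argument of the first paragraph; the remaining steps are bookkeeping that merely repackage the strict convexity and reflexivity hypotheses into the form required by Theorem \ref{left:necessary}.
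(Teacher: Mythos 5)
Your proposal is correct and takes essentially the same route as the paper: the paper simply remarks that strict convexity gives $Exp(B_{\mathbb{X}})= S_{\mathbb{X}}$ and then declares the result immediate from Theorem \ref{left:necessary}. Your write-up just supplies the details the paper leaves implicit, namely that every support functional at a point of $S_{\mathbb{X}}$ is in fact an exposing functional under strict convexity, and that $B_{\mathbb{X}}=co(S_{\mathbb{X}})$ holds trivially, after which $M_T \cap Exp(B_{\mathbb{X}})=M_T$.
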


\begin{theorem}
	Let $\mathbb{X}, \mathbb{Y}$ be two reflexive Banach spaces.  Let $B_{\mathbb{Y}^*}= co(Exp(B_{\mathbb{Y}^*}))$.  Suppose $T \in \mathbb{K}(\mathbb{X}, \mathbb{Y})$ is left symmetric. Then $T^*(y^*)$ is left symmetric in $\mathbb{X}^*$, for any $y^* \in M_{T^*} \cap Exp(B_{\mathbb{Y}^*}).$
\end{theorem}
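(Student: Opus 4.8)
The plan is to reduce this statement to Theorem \ref{left:necessary} by passing to adjoints. The key observation is that the adjoint operation $\Phi \colon \mathbb{K}(\mathbb{X}, \mathbb{Y}) \to \mathbb{K}(\mathbb{Y}^*, \mathbb{X}^*)$, given by $\Phi(T) = T^*$, is a surjective linear isometry. Indeed, by Schauder's theorem $T^*$ is compact whenever $T$ is, and $\|T^*\| = \|T\|$; surjectivity follows from the reflexivity of $\mathbb{X}$ and $\mathbb{Y}$, since for any $R \in \mathbb{K}(\mathbb{Y}^*, \mathbb{X}^*)$ we have $R^* \in \mathbb{K}(\mathbb{X}, \mathbb{Y})$ and $(R^*)^* = R$ under the canonical identifications $\mathbb{X}^{**} = \mathbb{X}$ and $\mathbb{Y}^{**} = \mathbb{Y}$.

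The first step is to show that $T^*$ is left symmetric in $\mathbb{K}(\mathbb{Y}^*, \mathbb{X}^*)$. Since $\|A + \lambda B\| = \|A^* + \lambda B^*\|$ for all scalars $\lambda$, the isometry $\Phi$ preserves Birkhoff-James orthogonality in both directions, i.e. $A \perp_B B$ if and only if $A^* \perp_B B^*$. Now I would take any $R \in \mathbb{K}(\mathbb{Y}^*, \mathbb{X}^*)$ with $T^* \perp_B R$ and write $R = S^*$, where $S = R^* \in \mathbb{K}(\mathbb{X}, \mathbb{Y})$ so that $S^* = R$. Then $T^* \perp_B S^*$ forces $T \perp_B S$; the left symmetry of $T$ yields $S \perp_B T$, whence $S^* \perp_B T^*$, that is, $R \perp_B T^*$. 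This establishes that $T^*$ is left symmetric.

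The final step is to apply Theorem \ref{left:necessary} to the operator $T^* \in \mathbb{K}(\mathbb{Y}^*, \mathbb{X}^*)$. Its hypotheses are met exactly: the domain $\mathbb{Y}^*$ is reflexive (because $\mathbb{Y}$ is reflexive) and satisfies $B_{\mathbb{Y}^*} = co(Exp(B_{\mathbb{Y}^*}))$ by assumption, while the codomain $\mathbb{X}^*$ is an arbitrary Banach space, as the theorem permits. The theorem then gives that $T^*(y^*)$ is left symmetric in $\mathbb{X}^*$ for every $y^* \in M_{T^*} \cap Exp(B_{\mathbb{Y}^*})$, which is precisely the desired conclusion.

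As for the main obstacle, the argument is essentially bookkeeping once the duality dictionary is in place, so the only genuinely delicate points are verifying that $\Phi$ is onto (this is where both reflexivity hypotheses become indispensable) and checking that the norm identity $\|A + \lambda B\| = \|A^* + \lambda B^*\|$ transfers \emph{left symmetry} and not merely orthogonality. I expect no serious difficulty here, but some care is needed to confirm that every operator $R$ appearing in the definition of left symmetry of $T^*$ is indeed an adjoint, which again rests on reflexivity.
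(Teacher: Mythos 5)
Your proposal is correct and follows essentially the same route as the paper: the paper's proof also notes that $T^*$ is compact, that reflexivity of $\mathbb{X}$ and $\mathbb{Y}$ together with left symmetry of $T$ make $T^*$ left symmetric (a step the paper leaves as "easy to show" and you spell out via the duality isometry), and then applies Theorem \ref{left:necessary} to $T^* \in \mathbb{K}(\mathbb{Y}^*, \mathbb{X}^*)$. Your write-up simply makes explicit the details the paper omits.
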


\begin{proof}
	As $T$ is compact, $T^*$ is also compact. Since $\mathbb{X}, \mathbb{Y}$ are reflexive and $T$ is left symmetric,  it is easy to show that $T^*$ is also left symmetric.  Following Theorem \ref{left:necessary}, $T^*(y^*)$ is left symmetric, for any $y^* \in M_{T^*} \cap Exp(B_{\mathbb{Y}^*}).$
\end{proof}

\begin{cor}
	Let $\mathbb{X}, \mathbb{Y}$ be two reflexive Banach spaces.  Let $B_{\mathbb{Y}^*}= co(Exp(B_{\mathbb{Y}^*}))$ and there exists no non-zero left symmetric points in $\mathbb{X}^*.$
	Then there exists no non-zero left symmetric operators in $\mathbb{K}(\mathbb{X}, \mathbb{Y})$.
\end{cor}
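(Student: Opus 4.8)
The plan is to argue by contradiction and reduce everything to the theorem immediately preceding this corollary. Suppose, towards a contradiction, that $T \in \mathbb{K}(\mathbb{X}, \mathbb{Y})$ is a non-zero left symmetric operator. The goal is then to manufacture a non-zero left symmetric point in $\mathbb{X}^*$, which is precisely what the hypothesis forbids.

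First I would record the standard functional-analytic facts about the adjoint. Since $T$ is compact, $T^* \in \mathbb{K}(\mathbb{Y}^*, \mathbb{X}^*)$ is compact as well, and $\|T^*\| = \|T\| > 0$. Because $\mathbb{Y}$ is reflexive, $\mathbb{Y}^*$ is reflexive too, and hence the compact operator $T^*$ attains its norm on $B_{\mathbb{Y}^*}$; that is, $M_{T^*} \neq \emptyset$. Next, using the hypothesis $B_{\mathbb{Y}^*} = co(Exp(B_{\mathbb{Y}^*}))$, I would upgrade this to $M_{T^*} \cap Exp(B_{\mathbb{Y}^*}) \neq \emptyset$, exactly as in the proof of Theorem \ref{left:necessary}: picking $y_0^* \in M_{T^*}$ and writing it as a finite convex combination $y_0^* = \sum_{i=1}^n \lambda_i e_i$ of exposed points $e_i \in Exp(B_{\mathbb{Y}^*})$ with $\lambda_i > 0$ and $\sum_i \lambda_i = 1$, the chain $\|T^*\| = \|T^*(y_0^*)\| \leq \sum_i \lambda_i \|T^*(e_i)\| \leq \|T^*\|$ forces $\|T^*(e_i)\| = \|T^*\|$ for every $i$, so each $e_i$ lies in $M_{T^*} \cap Exp(B_{\mathbb{Y}^*})$.

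Finally, fixing such a $y^* \in M_{T^*} \cap Exp(B_{\mathbb{Y}^*})$, the preceding theorem yields that $T^*(y^*)$ is left symmetric in $\mathbb{X}^*$. Since $\|T^*(y^*)\| = \|T^*\| = \|T\| > 0$, the element $T^*(y^*)$ is non-zero, contradicting the assumption that $\mathbb{X}^*$ contains no non-zero left symmetric points. Hence no non-zero left symmetric operator can exist in $\mathbb{K}(\mathbb{X}, \mathbb{Y})$, which is the assertion.

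The only genuinely substantive step is establishing that $M_{T^*} \cap Exp(B_{\mathbb{Y}^*})$ is non-empty; once this extremal-structure fact is secured, the corollary follows purely formally from the preceding theorem. The mild subtleties to watch are confirming that the passage $T \mapsto T^*$ preserves both compactness and norm, and that the reflexivity of $\mathbb{Y}$ is what guarantees norm attainment for $T^*$ so that an exposed norming functional genuinely exists. I do not expect any serious obstacle here, as this corollary is the exact analogue, at the level of the adjoint, of the first corollary deduced from Theorem \ref{left:necessary}.
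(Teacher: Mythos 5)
Your proof is correct and is exactly the derivation the paper intends: the corollary is stated without proof as an immediate consequence of the preceding theorem, and your argument supplies precisely the routine details (compactness and norm preservation of $T^*$, norm attainment via reflexivity of $\mathbb{Y}^*$, and the convex-combination argument showing $M_{T^*} \cap Exp(B_{\mathbb{Y}^*}) \neq \emptyset$, which mirrors the opening of the proof of Theorem \ref{left:necessary}). No issues to report.
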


As $(\ell_{\infty}^n)^{*} = \ell_{1}^n$ and there exists no non-zero left symmetric points in $\ell_{1}^n,$ we next observe that there exists no   non-zero left symmetric operators in $\mathbb{L}(\ell_\infty^n, \mathbb{Y})$, where $n \geq 3$.

\begin{cor}
	Let $\mathbb{Y}$ be a reflexive Banach space with  $B_{\mathbb{Y}^*}= co(Exp(B_{\mathbb{Y}^*}))$. 
	Then there exists no non-zero left symmetric operators in $\mathbb{L }(\ell_\infty^n, \mathbb{Y})$, where $n \geq 3$.
\end{cor}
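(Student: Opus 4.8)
The plan is to derive this statement directly from the preceding corollary by instantiating $\mathbb{X} = \ell_\infty^n$ and verifying that each of its hypotheses is satisfied. First I would observe that $\ell_\infty^n$ is finite-dimensional, hence reflexive, so the reflexivity requirement on the domain is automatic. The reflexivity of $\mathbb{Y}$ together with the condition $B_{\mathbb{Y}^*} = co(Exp(B_{\mathbb{Y}^*}))$ is assumed in the hypothesis of the present corollary, so these two conditions carry over verbatim.

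The one genuinely external input concerns the structure of left symmetric points in the dual of $\ell_\infty^n$. Since $(\ell_\infty^n)^* = \ell_1^n$ isometrically, I would invoke \cite[Th. 2.8]{CSS}, which guarantees that $\ell_1^n$ possesses no non-zero left symmetric points as soon as $n \geq 3$. This supplies precisely the remaining hypothesis of the previous corollary, namely that there are no non-zero left symmetric points in $\mathbb{X}^*$.

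With all hypotheses in place, the previous corollary yields that $\mathbb{K}(\ell_\infty^n, \mathbb{Y})$ contains no non-zero left symmetric operators. To upgrade the conclusion from $\mathbb{K}$ to $\mathbb{L}$, I would note that since the domain $\ell_\infty^n$ is finite-dimensional, every bounded linear operator from $\ell_\infty^n$ into $\mathbb{Y}$ is compact, whence $\mathbb{L}(\ell_\infty^n, \mathbb{Y}) = \mathbb{K}(\ell_\infty^n, \mathbb{Y})$ as Banach spaces. The desired statement for $\mathbb{L}(\ell_\infty^n, \mathbb{Y})$ then follows immediately.

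The proof is essentially a routine instantiation, so the main point requiring care is not the logical argument but the correct identification and citation of the fact about left symmetric points in $\ell_1^n$; the threshold $n \geq 3$ is essential here, since for $n = 2$ such symmetric points do exist and the conclusion would fail. The only other step deserving a moment's attention is the identification $\mathbb{L} = \mathbb{K}$, which rests solely on the finite-dimensionality of the domain.
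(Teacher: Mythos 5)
Your proposal is correct and follows essentially the same route as the paper: the paper derives this corollary by applying the preceding corollary (for $\mathbb{K}(\mathbb{X},\mathbb{Y})$ with no non-zero left symmetric points in $\mathbb{X}^*$) to $\mathbb{X}=\ell_\infty^n$, using $(\ell_\infty^n)^*=\ell_1^n$ and \cite[Th. 2.8]{CSS}. Your additional explicit remarks — that $\ell_\infty^n$ is reflexive by finite-dimensionality and that $\mathbb{L}(\ell_\infty^n,\mathbb{Y})=\mathbb{K}(\ell_\infty^n,\mathbb{Y})$ since the domain is finite-dimensional — are exactly the details the paper leaves implicit.
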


\begin{cor}
	Let $\mathbb{X}, \mathbb{Y}$ be  reflexive  Banach spaces such that $\mathbb{Y}$ is smooth.  Suppose $T \in \mathbb{K}(\mathbb{X}, \mathbb{Y})$ is left symmetric. Then $T^*(y^*)$ is left symmetric  in $\mathbb{X}^*$, for any $y^* \in M_{T^*}.$
\end{cor}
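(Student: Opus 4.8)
The plan is to reduce this statement to the preceding theorem by showing that smoothness of the reflexive space $\mathbb{Y}$ forces $\mathbb{Y}^*$ to be strictly convex, which makes the hypothesis $B_{\mathbb{Y}^*}=co(Exp(B_{\mathbb{Y}^*}))$ automatic and identifies $Exp(B_{\mathbb{Y}^*})$ with the entire sphere $S_{\mathbb{Y}^*}$. First I would invoke the classical duality between smoothness and strict convexity. Since $\mathbb{Y}$ is reflexive, $\mathbb{Y}^{**}=\mathbb{Y}$, so smoothness of $\mathbb{Y}=(\mathbb{Y}^*)^*$ together with the standard implication ``$Z^*$ smooth $\Rightarrow$ $Z$ strictly convex'' (applied to $Z=\mathbb{Y}^*$) yields that $\mathbb{Y}^*$ is strictly convex.

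Next, as recorded in the remark preceding the earlier theorem, strict convexity of $\mathbb{Y}^*$ gives $Exp(B_{\mathbb{Y}^*})=S_{\mathbb{Y}^*}$: any support functional at a point of $S_{\mathbb{Y}^*}$ in fact exposes it, for if it supported a second sphere point then the midpoint of the two would lie on the sphere, contradicting strict convexity. Consequently $B_{\mathbb{Y}^*}=co(S_{\mathbb{Y}^*})=co(Exp(B_{\mathbb{Y}^*}))$, so the hypotheses of the previous theorem are satisfied. Moreover $M_{T^*}\subseteq S_{\mathbb{Y}^*}=Exp(B_{\mathbb{Y}^*})$ by the very definition of the norm attainment set, whence $M_{T^*}\cap Exp(B_{\mathbb{Y}^*})=M_{T^*}$.

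Finally, applying the previous theorem to $T$ (which is compact, left symmetric, and maps between reflexive spaces) gives that $T^*(y^*)$ is left symmetric in $\mathbb{X}^*$ for every $y^*\in M_{T^*}\cap Exp(B_{\mathbb{Y}^*})=M_{T^*}$, which is exactly the desired conclusion. The only genuinely substantive point is getting the duality direction correct: one must use reflexivity to pass from smoothness of $\mathbb{Y}$ to strict convexity of $\mathbb{Y}^*$, since smoothness of $\mathbb{Y}$ alone would not suffice in a non-reflexive setting. Everything else is bookkeeping against the statement of the preceding theorem.
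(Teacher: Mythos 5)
Your proposal is correct and follows exactly the route the paper intends: reflexivity plus smoothness of $\mathbb{Y}$ gives strict convexity of $\mathbb{Y}^*$, hence $Exp(B_{\mathbb{Y}^*})=S_{\mathbb{Y}^*}$ (a fact the paper itself records for strictly convex spaces), so the hypotheses $B_{\mathbb{Y}^*}=co(Exp(B_{\mathbb{Y}^*}))$ and $M_{T^*}\cap Exp(B_{\mathbb{Y}^*})=M_{T^*}$ of the preceding theorem hold automatically. The paper states this corollary without proof precisely because this is the intended deduction, and your handling of the one delicate point (the duality direction requiring reflexivity) is accurate.
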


		\section*  {\textbf{II. $\rho$-orthogonality in function spaces and space of operators}}

		We begin this section with the following proposition.
		
		\begin{prop}\label{prop:l_infty}
			Let $K$ be a non-empty set and let   $f, g \in \ell_{\infty}(K, \mathbb{X} ).$ Consider  
		\[
		\Omega= \bigg\{ \lim y_n^*(g(k_n)): k_n \in K, y_n^* \in Ext (B_{\mathbb{X}^*}), \forall n \in \mathbb{N}, \lim y_n^*(f(k_n))= \|f\|\bigg\}.
		\]
			Then
		\begin{itemize}
			\item [(i)] $ \rho'_+(f,g)= \sup \Omega$
		\item [(ii)] $\rho'_-(f,g)= \inf \Omega$
		\item [(iii)] $\rho' (f,g)= \frac{1}{2}( \sup \Omega + \inf \Omega).$
		\end{itemize}  
		\end{prop}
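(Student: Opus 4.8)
The plan is to compute $\rho'_+(f,g)$ through the support functionals of $f$ in the ambient space $\ell_{\infty}(K,\mathbb{X})$. Since $\rho$-orthogonality is homogeneous, I may assume $\|f\|=1$ (the case $f=0$ being trivial). Applying Lemma \ref{functional} in the normed space $\ell_{\infty}(K,\mathbb{X})$ — and extending it to an arbitrary second argument by positive homogeneity in that variable — gives $\rho'_+(f,g)=\sup\{\Phi(g):\Phi\in J(f)\}$, where $J(f)=\{\Phi\in S_{\ell_{\infty}(K,\mathbb{X})^*}:\Phi(f)=1\}$. With this in hand I would prove (i) by establishing the two inequalities $\sup\Omega\le\rho'_+(f,g)$ and $\rho'_+(f,g)\le\sup\Omega$ separately; statement (ii) then drops out of (i) applied to $-g$, and (iii) is immediate from the definition $\rho'=\tfrac{1}{2}(\rho'_++\rho'_-)$.

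For $\sup\Omega\le\rho'_+(f,g)$, take any $\alpha\in\Omega$, realised as $\alpha=\lim_n y_n^*(g(k_n))$ with $k_n\in K$, $y_n^*\in Ext(B_{\mathbb{X}^*})$ and $\lim_n y_n^*(f(k_n))=1$. The natural candidate support functional is a weak*-limit of the evaluation functionals $\Phi_n\in B_{\ell_{\infty}(K,\mathbb{X})^*}$ defined by $\Phi_n(h)=y_n^*(h(k_n))$, each of norm at most $1$. By Banach--Alaoglu the sequence $(\Phi_n)$ has a weak*-convergent subnet with limit $\Phi\in B_{\ell_{\infty}(K,\mathbb{X})^*}$. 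Evaluating the subnet at $f$ and at $g$, and using that a subnet of a convergent scalar sequence retains its limit, I obtain $\Phi(f)=1$ (so $\|\Phi\|=1$ and $\Phi\in J(f)$) and $\Phi(g)=\alpha$. Hence $\rho'_+(f,g)\ge\Phi(g)=\alpha$, and taking the supremum over $\alpha\in\Omega$ finishes this inequality.

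For the reverse inequality I work directly from the definition. Fix $t_n\downarrow 0$. For each $n$ choose $k_n\in K$ with $\|f(k_n)+t_ng(k_n)\|\ge\|f+t_ng\|-t_n^2$, and then, by the Bauer maximum principle (the map $y^*\mapsto y^*(f(k_n)+t_ng(k_n))$ is weak*-continuous and affine on the weak*-compact convex set $B_{\mathbb{X}^*}$), an extreme point $y_n^*\in Ext(B_{\mathbb{X}^*})$ norming $f(k_n)+t_ng(k_n)$. Since $y_n^*(f(k_n))\le\|f(k_n)\|\le 1$, a short estimate yields $\big(\|f+t_ng\|-1\big)/t_n\le y_n^*(g(k_n))+t_n$. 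Passing to a subsequence along which the bounded quantities $y_n^*(g(k_n))$ converge to some $\beta$, and checking from $y_n^*(f(k_n))=\|f(k_n)+t_ng(k_n)\|-t_ny_n^*(g(k_n))$ that $y_n^*(f(k_n))\to 1=\|f\|$, I conclude that $\beta\in\Omega$ and $\rho'_+(f,g)\le\beta\le\sup\Omega$. This establishes (i).

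Finally, (ii) follows by applying (i) to $-g$: since $\rho'_-(f,g)=-\rho'_+(f,-g)$ and the defining set for $-g$ is exactly $-\Omega$, one gets $\rho'_-(f,g)=-\rho'_+(f,-g)=-\sup(-\Omega)=\inf\Omega$; and (iii) is then the average of (i) and (ii). I expect the main obstacle to be the lower-bound step of (i): producing a genuine support functional of $f$ on the large, badly behaved dual $\ell_{\infty}(K,\mathbb{X})^*$. The resolution is to avoid describing $Ext(J(f))$ explicitly and instead manufacture $\Phi$ as a weak*-cluster point of point evaluations, the delicate points being to ensure that the identities $\Phi(f)=\|f\|$ and $\Phi(g)=\alpha$ survive the passage to a subnet, and that the data extracted in the upper-bound step genuinely meets the norming constraint $\lim y_n^*(f(k_n))=\|f\|$ that defines $\Omega$.
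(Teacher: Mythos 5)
Your proof is correct, but it takes a genuinely different route from the paper's. The paper's proof is a two-line citation argument: after invoking Lemma \ref{functional}, it identifies $\rho'_+(f,g)$ with $\sup V(\ell_{\infty}(K,\mathbb{X}),f,g)$ and then quotes the numerical-range machinery of \cite{MMQRS} — namely that $B_{\ell_{\infty}(K,\mathbb{X})^*}=\overline{co(\mathcal{C})}^{w^*}$ for the elementary functionals $\mathcal{C}=\{y^*\otimes\delta_k\}$ together with \cite[Cor. 2.5]{MMQRS}, which computes $V(\ell_{\infty}(K,\mathbb{X}),f,g)$ as the convex hull of exactly the set $\Omega$; taking suprema finishes (i). You instead prove the two inequalities by hand: Banach--Alaoglu applied to the evaluation functionals $\Phi_n=y_n^*\otimes\delta_{k_n}$ gives $\sup\Omega\le\rho'_+(f,g)$, and a direct difference-quotient estimate with nearly-norming points $k_n$ and extreme norming functionals $y_n^*$ (Krein--Milman/Bauer) gives the reverse inequality, including the verification that the extracted data satisfies $\lim y_n^*(f(k_n))=\|f\|$. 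In effect you have reproved, in this special case, the corollary from \cite{MMQRS} that the paper cites; what your argument buys is self-containedness and transparency about why limits along sequences and extreme points suffice, at the cost of length, while the paper's version is shorter and places the result inside an existing numerical-range framework. Your derivation of (ii) from (i) via $\rho'_-(f,g)=-\rho'_+(f,-g)$ and $\Omega\mapsto-\Omega$ is also slicker than the paper's ``follows similarly.''

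One small caveat: your opening normalization is justified by ``homogeneity of $\rho$-orthogonality,'' but the proposition concerns the \emph{values} of $\rho'_{\pm}$, not orthogonality. Since $\rho'_+(cf,g)=c\,\rho'_+(f,g)$ for $c>0$ while $\Omega$ is unchanged when $f$ is rescaled, the stated identity is literally valid only for $\|f\|=1$ (in general one gets $\rho'_+(f,g)=\|f\|\sup\Omega$, and the case $f=0$ genuinely fails, as $\Omega$ then imposes no constraint). This is an imprecision inherited from the paper itself, which tacitly assumes $\psi(f)=1$; your proof is fine once the hypothesis $\|f\|=1$ is made explicit rather than claimed as a harmless reduction.
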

		
		\begin{proof}
		(i) From Lemma \ref{functional},  $ \rho'_+(f,g)=\sup  \{ \psi(g): \psi \in S_{\mathbb{\ell_{\infty}(K, \mathbb{X})}^*}, \psi(f)=1\}.$ 
		Clearly,  $$ \rho'_+(f,g)=\sup  co( \{ \psi(g): \psi \in S_{\mathbb{\ell_{\infty}(K, \mathbb{X})}^*}, \psi(f)=1\})= \sup V(\ell_{\infty}(K, \mathbb{X}), f, g).$$
		  Consider the set 
		\[
		\mathcal{C}= \{ y^* \otimes \delta_k : k \in K, y^* \in Ext(B_{\mathbb{X}^*})\} \subset S_{\ell_{\infty}(K, \mathbb{X})^*},
		\]
		where $[y^* \otimes \delta_k ]( f ) := y^*(f(k)),$ for every $f \in \ell_{\infty}(K, \mathbb{X}).$ Observing that $B_{\ell_{\infty}(K, \mathbb{X})^*}= \overline{co(\mathcal{C} )}^{w*}$ (see \cite{MMQRS}) and using \cite[Cor. 2.5]{MMQRS}, we get
		\[
		V(\ell_{\infty}(K, \mathbb{X}), f, g)= co \bigg( \bigg\{ \lim y_n^*(g(k_n)): k_n \in K, y_n^* \in Ext(B_{\mathbb{X}^*}), \forall n \in \mathbb{N}, \lim y_n^*(f(k_n))= \|f\| \bigg\} \bigg).
		\]
		So, 
			\[
\rho'_+(f,g)	=\sup  \bigg\{ \lim y_n^*(g(k_n)): k_n \in K, y_n^* \in  Ext(B_{\mathbb{X}^*}), \forall n \in \mathbb{N}, \lim y_n^*(f(k_n))= \|f\| \bigg\}.
		\]

		(ii) follows similarly as (i) and (iii) follows as a consequence of (i) and (ii).
		 
		\end{proof}

		\begin{prop}\label{prop:continuous}
		 	Let $K$ be a compact Hausdorff space and let  $f, g \in C(K, \mathbb{X} ).$ Consider
		 \[
		 \Omega= \bigg\{  y^*(g(k)): k \in K, y^* \in  Ext(B_{\mathbb{X}^*}),   y^*(f(k))= \|f\|\bigg\}.
		 \]
		 Then
		 \begin{itemize}
		 	\item [(i)] $ \rho'_+(f,g)= \sup \Omega$
		 \item [(ii)] $\rho'_-(f,g)= \inf \Omega$
		 \item [(iii)] $\rho' (f,g)= \frac{1}{2}( \sup \Omega + \inf \Omega).$
		 \end{itemize}  
		\end{prop}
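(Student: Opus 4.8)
The plan is to deduce this from Proposition \ref{prop:l_infty} by using the extra hypotheses ($K$ compact Hausdorff, $f,g$ continuous) to replace the sequential limits appearing there by values attained at a single point of $K$. First I would observe that $C(K,\mathbb{X})$ sits isometrically inside $\ell_{\infty}(K,\mathbb{X})$, and that for fixed $f,g \in C(K,\mathbb{X})$ the one-sided norm derivatives $\rho'_{\pm}(f,g)$ depend only on the real-valued function $t \mapsto \|f+tg\|$, which is literally the same whether $f+tg$ is regarded as an element of $C(K,\mathbb{X})$ or of $\ell_{\infty}(K,\mathbb{X})$. Hence Proposition \ref{prop:l_infty} applies directly and gives $\rho'_+(f,g)=\sup \Omega_{\infty}$ and $\rho'_-(f,g)=\inf \Omega_{\infty}$, where $\Omega_{\infty}$ denotes the ``sequential-limit'' set of Proposition \ref{prop:l_infty}. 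It then suffices to prove $\sup \Omega = \sup \Omega_{\infty}$ and $\inf \Omega = \inf \Omega_{\infty}$; part (iii) follows at once.

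One inclusion is immediate: taking constant sequences $k_n \equiv k$ and $y_n^* \equiv y^*$ shows $\Omega \subseteq \Omega_{\infty}$, whence $\sup \Omega \le \sup \Omega_{\infty}$ and $\inf \Omega \ge \inf \Omega_{\infty}$. For the reverse I would take a typical element $L=\lim_n y_n^*(g(k_n)) \in \Omega_{\infty}$ with $\lim_n y_n^*(f(k_n))=\|f\|$ and invoke compactness of the product $K \times (B_{\mathbb{X}^*}, w^*)$: by Banach--Alaoglu this is compact, so the net $(k_n,y_n^*)$ has a subnet $(k_{n_\alpha},y_{n_\alpha}^*)$ converging to some $(k_0,y_0^*) \in K \times B_{\mathbb{X}^*}$. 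Using continuity of $f$ and $g$ together with the weak* convergence $y_{n_\alpha}^* \to y_0^*$, a short two-term estimate (splitting $y_{n_\alpha}^*(f(k_{n_\alpha}))-y_0^*(f(k_0))$ as $y_{n_\alpha}^*\big(f(k_{n_\alpha})-f(k_0)\big)+(y_{n_\alpha}^*-y_0^*)(f(k_0))$, and similarly for $g$) shows $y_0^*(f(k_0))=\|f\|$ and $y_0^*(g(k_0))=L$. In particular $\|f(k_0)\|=\|f\|$ and $y_0^* \in J(f(k_0))$.

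The hard part will be that $y_0^*$ need not be an extreme point of $B_{\mathbb{X}^*}$, since extremality is not preserved under weak* limits, so $L=y_0^*(g(k_0))$ is not yet displayed as an element of $\Omega$. To repair this I would pass to the exposed face $F:=J(f(k_0))=\{y^* \in B_{\mathbb{X}^*} : y^*(f(k_0))=\|f(k_0)\|\}$, which is a nonempty, weak*-compact, convex face of $B_{\mathbb{X}^*}$. The evaluation $y^* \mapsto y^*(g(k_0))$ is weak*-continuous and affine, so by the Bauer maximum principle it attains its maximum over $F$ at an extreme point of $F$; moreover every extreme point of a face of $B_{\mathbb{X}^*}$ is an extreme point of $B_{\mathbb{X}^*}$. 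This produces $y^* \in Ext(B_{\mathbb{X}^*})$ with $y^*(f(k_0))=\|f\|$ and $y^*(g(k_0)) \ge y_0^*(g(k_0))=L$, so $y^*(g(k_0)) \in \Omega$ and hence $L \le \sup \Omega$. Taking the supremum over $\Omega_{\infty}$ gives $\sup \Omega_{\infty} \le \sup \Omega$, and the symmetric argument (applying the Bauer principle to $-(\cdot)(g(k_0))$, i.e.\ minimizing) yields $\inf \Omega_{\infty} \ge \inf \Omega$. Combining with the trivial inclusion establishes (i) and (ii), and (iii) is then a direct consequence.
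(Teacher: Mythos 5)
Your proof is correct, but it takes a genuinely different route from the paper's. The paper argues directly: it quotes the structure theorem $Ext(B_{C(K,\mathbb{X})^*})=\{y^*\otimes\delta_k : k\in K,\ y^*\in Ext(B_{\mathbb{X}^*})\}$ from \cite{R2}, identifies $Ext(J(f))$ with those $y^*\otimes\delta_k$ satisfying $y^*(f(k))=\|f\|$ (using that $J(f)$ is a face of the dual ball), and then reads off all three identities from Lemma \ref{functional} in a few lines. You instead bootstrap from Proposition \ref{prop:l_infty}, and the two points that make this transfer legitimate are both handled correctly: first, your observation that $\rho'_{\pm}(f,g)$ depends only on $t\mapsto\|f+tg\|$, hence is unchanged under the isometric embedding of $C(K,\mathbb{X})$ into $\ell_{\infty}(K,\mathbb{X})$ even though the dual balls (and hence the sets $J(f)$) differ; second, the fact that weak* limits destroy extremality, which you repair via the Bauer maximum principle on the weak*-compact convex face $J(f(k_0))$, whose extreme points are indeed extreme in $B_{\mathbb{X}^*}$. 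Your subnet/weak*-compactness reduction is essentially the same estimate the paper itself uses later to prove Lemma \ref{lemma:1}, and a cheaper substitute for the Bauer step would have been to apply Lemma \ref{functional} in $\mathbb{X}$ to the pair $(f(k_0),g(k_0))$, which already says that the sup and inf over $J(f(k_0))$ equal those over its extreme points. What your route buys: independence from the extreme-point theorem for $C(K,\mathbb{X})^*$ (only the $\ell_{\infty}$ result plus general compactness and convexity tools are needed), and the stronger set-level conclusions $\sup\Omega=\sup\Omega_{\infty}$ and $\inf\Omega=\inf\Omega_{\infty}$. What the paper's route buys: brevity. (Both arguments implicitly assume $f\neq 0$ and inherit the unit-vector normalization of Lemma \ref{functional}; that defect is shared with the paper, not a gap in your proposal.)
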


		\begin{proof}
	We only prove (i), as (ii) and (iii) follows in similar fashion.  We recall that (see \cite{R2}), $Ext (B_{C(K, \mathbb{X})^*})= \{  y^* \otimes \delta_k: k \in K, y^* \in Ext(B_{\mathbb{X}^*})\},$ 	where $[y^* \otimes \delta_k ]( f ) := y^*(f(k)),$ for  $f \in \ell_{\infty}(K, \mathbb{X}).$
	 Following Lemma \ref{functional}, 
	 \begin{eqnarray*}
	 	\rho_{+}'(f,g) &=& \sup \{ \psi(g): \psi \in Ext(J(f))\}\\
	 	&=& \sup \{ [ y^* \otimes \delta_k] (g): [ y^* \otimes \delta_k](f) = \|f\|, k \in K, y^* \in Ext(B_{\mathbb{X}^*})\}\\
	 	&=&  \sup \{ y^*(g(k)): k \in K, y^* \in Ext(B_{\mathbb{X}^*}) , y^*(f(k))= \|f\|\}\\& =&  \sup \Omega.
	 \end{eqnarray*}
		\end{proof}

		Following Proposition \ref{prop:l_infty}  and  \ref{prop:continuous} and the definition of $\rho$-orthogonality, ($\rho_{+}$)-orthogonality and ($\rho_{-}$)-orthogonality  the following results are immediate.
		
		\begin{theorem}\label{bounded:characterization}
		Let $K$ be a non-empty set and let  $f, g \in \ell_{\infty}(K, \mathbb{X} ).$ Consider 
		\[
		\Omega= \bigg\{ \lim y_n^*(g(k_n)): k_n \in K, y_n^* \in Ext(B_{\mathbb{X}^*}), \forall n \in \mathbb{N}, \lim y_n^*(f(k_n))= \|f\|\bigg\}.
		\]
		Then
		\begin{itemize}
			\item [(i)] $ f \perp_{\rho_+}  g$ if and only if $\sup \Omega=0.$
			\item [(ii)] $ f \perp_{\rho_-}  g$ if and only if $\inf \Omega=0.$
			\item [(iii)] $ f \perp_{\rho}  g$ if and only if $\sup \Omega +\inf \Omega=0.$
		\end{itemize}  
	
	\end{theorem}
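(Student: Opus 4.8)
The plan is to derive Theorem \ref{bounded:characterization} as a direct corollary of Proposition \ref{prop:l_infty}, exactly as the excerpt suggests, by unwinding the definitions of the three orthogonality relations. The key observation is that Proposition \ref{prop:l_infty} already identifies $\rho'_+(f,g) = \sup \Omega$, $\rho'_-(f,g) = \inf \Omega$, and $\rho'(f,g) = \tfrac{1}{2}(\sup \Omega + \inf \Omega)$ for the very same set $\Omega$. Thus each item is obtained by substituting these expressions into the defining equations $\rho'_+(f,g)=0$, $\rho'_-(f,g)=0$, and $\rho'(f,g)=0$ respectively.

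For item (i), by definition $f \perp_{\rho_+} g$ means $\rho'_+(f,g)=0$, and since Proposition \ref{prop:l_infty}(i) gives $\rho'_+(f,g)=\sup\Omega$, this is equivalent to $\sup\Omega=0$. Item (ii) follows identically using Proposition \ref{prop:l_infty}(ii), replacing $\sup$ by $\inf$. For item (iii), $f \perp_\rho g$ means $\rho'(f,g)=0$; by Proposition \ref{prop:l_infty}(iii) this reads $\tfrac{1}{2}(\sup\Omega+\inf\Omega)=0$, which is clearly equivalent to $\sup\Omega+\inf\Omega=0$. The proof therefore amounts to three one-line substitutions.

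There is essentially no obstacle here, since the heavy lifting — identifying the norm derivatives with the supremum and infimum over $\Omega$ — was already carried out in Proposition \ref{prop:l_infty}. The only point that deserves a word of care is the consistency check for item (iii): one should note that $f\perp_\rho g$ via Lemma \ref{B-J} is equivalent to $\rho'_-(f,g)\le 0\le\rho'_+(f,g)$, i.e. $\inf\Omega\le 0\le\sup\Omega$, which is implied by but formally weaker than $\sup\Omega+\inf\Omega=0$; however, the stated criterion uses the definition $\rho'(f,g)=0$ directly, so no tension arises and the equivalence $\rho'(f,g)=0\iff\sup\Omega+\inf\Omega=0$ is immediate. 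I would phrase the proof in a single short paragraph invoking Proposition \ref{prop:l_infty} and the definitions of $(\rho_+)$-, $(\rho_-)$-, and $\rho$-orthogonality, and simply remark that (i), (ii), (iii) follow at once.
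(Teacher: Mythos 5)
Your proof is correct and matches the paper's own treatment: the paper likewise derives Theorem \ref{bounded:characterization} as an immediate consequence of Proposition \ref{prop:l_infty} together with the definitions of $(\rho_+)$-, $(\rho_-)$-, and $\rho$-orthogonality. Your extra remark reconciling item (iii) with Lemma \ref{B-J} is a harmless (and correct) addition, but nothing more is needed.
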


		\begin{theorem}\label{continuous:characterization}
		Let $K$ be a  compact Hausdorff space and let  $f, g \in C(K, \mathbb{X} ).$ Consider   
		\[
		\Omega= \bigg\{  y^*(g(k)): k \in K, y^* \in Ext(B_{\mathbb{X}^*}),   y^*(f(k))= \|f\|\bigg\}.
		\]
		Then
		\begin{itemize}
			\item [(i)] $ f \perp_{\rho_+}  g$ if and only if $\sup \Omega=0.$
			\item [(ii)] $ f \perp_{\rho_-}  g$ if and only if $\inf \Omega=0.$
			\item [(iii)] $ f \perp_{\rho}  g$ if and only if $\sup \Omega + \inf \Omega=0.$
		\end{itemize}  
			\end{theorem}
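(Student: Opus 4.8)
The plan is to derive Theorem \ref{continuous:characterization} as a direct consequence of Proposition \ref{prop:continuous} together with the definitions of $\rho_+$-, $\rho_-$-, and $\rho$-orthogonality. Recall from the definitions that $f \perp_{\rho_+} g$ means precisely $\rho'_+(f,g)=0$, that $f \perp_{\rho_-} g$ means $\rho'_-(f,g)=0$, and that $f \perp_\rho g$ means $\rho'(f,g)=\tfrac12\big(\rho'_+(f,g)+\rho'_-(f,g)\big)=0$. Since Proposition \ref{prop:continuous} already identifies $\rho'_+(f,g)=\sup\Omega$, $\rho'_-(f,g)=\inf\Omega$, and $\rho'(f,g)=\tfrac12(\sup\Omega+\inf\Omega)$ for the very same set $\Omega$, each assertion of the theorem is obtained by substituting these equalities into the corresponding orthogonality condition.

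Concretely, I would argue as follows. For part (i), $f \perp_{\rho_+} g \iff \rho'_+(f,g)=0 \iff \sup\Omega=0$, where the first equivalence is the definition of $\rho_+$-orthogonality and the second is Proposition \ref{prop:continuous}(i). Part (ii) is entirely analogous, replacing $\rho'_+$ by $\rho'_-$ and $\sup\Omega$ by $\inf\Omega$ via Proposition \ref{prop:continuous}(ii). For part (iii), $f \perp_\rho g \iff \rho'(f,g)=0$; by Proposition \ref{prop:continuous}(iii) this equals $\tfrac12(\sup\Omega+\inf\Omega)$, and since the factor $\tfrac12$ is nonzero, $\rho'(f,g)=0$ holds if and only if $\sup\Omega+\inf\Omega=0$, which is exactly the stated condition.

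There is essentially no substantive obstacle here, since the analytical content has already been absorbed into Proposition \ref{prop:continuous}; the theorem is a formal restatement of that proposition in the language of orthogonality. The only point worth a moment of care is that in part (iii) one should not be tempted to read $\sup\Omega+\inf\Omega=0$ as the conjunction of $\sup\Omega=0$ and $\inf\Omega=0$: in general $\sup\Omega$ and $\inf\Omega$ need only be opposite in sign (consistent with Lemma \ref{B-J}, which characterizes $\perp_\rho$ by $\rho'_-(f,g)\le 0\le\rho'_+(f,g)$, and with the known inclusion $\perp_\rho\subset\perp_B$). Thus the three conditions genuinely differ, and the proof simply records the clean correspondence between each flavor of orthogonality and the appropriate extremal quantity attached to $\Omega$.

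Since the statement is declared immediate from the preceding propositions, I would keep the written proof to a single short paragraph (or even omit it, as the authors appear to do), merely citing Proposition \ref{prop:continuous} and the definitions. The identical remarks apply to Theorem \ref{bounded:characterization}, whose proof follows verbatim from Proposition \ref{prop:l_infty} with the limit-based description of $\Omega$ in place of the pointwise one.
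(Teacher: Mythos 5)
Your proposal is correct and matches the paper exactly: the authors state that Theorem \ref{continuous:characterization} is immediate from Proposition \ref{prop:continuous} together with the definitions of $\rho_+$-, $\rho_-$-, and $\rho$-orthogonality, which is precisely the substitution argument you give (including the harmless factor $\tfrac12$ in part (iii)). Your cautionary remark that $\sup\Omega+\inf\Omega=0$ is weaker than the conjunction $\sup\Omega=\inf\Omega=0$ is a sound observation, consistent with Lemma \ref{B-J}, though the paper does not spell it out.
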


As a consequence of Theorem  \ref{continuous:characterization}, we  characterize the $\rho_{+}$-orthogonality, $\rho_{-}$-orthogonality and $\rho$-orthogonality in the space $C(K).$
			
	\begin{cor}\label{C(K)}
			Let $K$ be a compact Hausdorff space. 	Let  $f, g \in C(K )$ and let 
			\[
			\Omega= \bigg\{  sgn(f(k)) g(k): k \in K,   |f(k)|= \|f\|\bigg\}.
			\]
			Then
			\begin{itemize}
				\item [(i)] $ f \perp_{\rho_+}  g$ if and only if $\sup \Omega=0.$
				\item [(ii)] $ f \perp_{\rho_-}  g$ if and only if $\inf \Omega=0.$
				\item [(iii)] $ f \perp_{\rho}  g$ if and only if $\sup \Omega + \inf \Omega=0.$
			\end{itemize}  
	\end{cor}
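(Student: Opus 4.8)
The plan is to derive Corollary \ref{C(K)} as a direct specialization of Theorem \ref{continuous:characterization} to the scalar case $\mathbb{X} = \mathbb{R}$. The entire corollary follows once I identify the set $\Omega$ appearing in Theorem \ref{continuous:characterization} with the concrete set $\Omega$ stated here. First I would record that when $\mathbb{X} = \mathbb{R}$, the dual is $\mathbb{R}^* = \mathbb{R}$, the unit ball is $B_{\mathbb{R}} = [-1,1]$, and its extreme points are exactly $Ext(B_{\mathbb{R}}) = \{-1, +1\}$. Thus the functionals $y^* \in Ext(B_{\mathbb{R}^*})$ range over multiplication by $\pm 1$, i.e. $y^*(t) = \pm t$ for $t \in \mathbb{R}$.

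The key computation is to simplify the constraint $y^*(f(k)) = \|f\|$ together with the evaluation $y^*(g(k))$. For a fixed $k \in K$, a choice of $y^* \in \{-1,+1\}$ satisfies $y^*(f(k)) = \|f\|$ precisely when $\pm f(k) = \|f\|$, which forces $|f(k)| = \|f\|$ and fixes the sign $y^* = \operatorname{sgn}(f(k))$ (here $\operatorname{sgn}(f(k))$ denotes the sign of the nonzero real number $f(k)$; note $\|f\| > 0$ may be assumed, as the degenerate case $f \equiv 0$ makes every orthogonality relation hold trivially and $\Omega = \emptyset$). With this sign pinned down, the value $y^*(g(k))$ becomes $\operatorname{sgn}(f(k)) g(k)$. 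Consequently the abstract index set $\{(k, y^*) : y^*(f(k)) = \|f\|\}$ collapses to $\{k \in K : |f(k)| = \|f\|\}$ with the associated value $\operatorname{sgn}(f(k)) g(k)$, so that the two descriptions of $\Omega$ coincide exactly.

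Once $\Omega$ is identified, the three equivalences (i), (ii), (iii) transfer verbatim from Theorem \ref{continuous:characterization}: $f \perp_{\rho_+} g \iff \sup \Omega = 0$, $f \perp_{\rho_-} g \iff \inf \Omega = 0$, and $f \perp_{\rho} g \iff \sup \Omega + \inf \Omega = 0$. I do not anticipate any genuine obstacle here; the content is entirely a bookkeeping reduction. The only point demanding mild care is the matching of the sign convention and the handling of the constraint $y^*(f(k)) = \|f\|$, ensuring that for each norm-attaining point $k$ exactly one extreme functional contributes and that it is the one recorded by $\operatorname{sgn}(f(k))$. This makes the proof a short, essentially immediate, consequence of the preceding theorem.
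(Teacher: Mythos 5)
Your proposal is correct and matches the paper's approach exactly: the paper states Corollary \ref{C(K)} as an immediate specialization of Theorem \ref{continuous:characterization} to $\mathbb{X}=\mathbb{R}$, and your bookkeeping — identifying $Ext(B_{\mathbb{R}^*})=\{\pm 1\}$, noting that the constraint $y^*(f(k))=\|f\|$ forces $|f(k)|=\|f\|$ and $y^*=\operatorname{sgn}(f(k))$, and then transferring (i)--(iii) verbatim — is precisely the omitted verification. The degenerate case $f\equiv 0$ is implicitly excluded in the paper as well (Lemma \ref{functional} is stated for unit vectors), so your handling of it is fine.
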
		

The following two lemmas will be useful throughout this article.

	\begin{lemma}\label{lemma:1}
	Let $K$ be a  compact Hausdorff space and let $f \in S_{C(K, \mathbb{X})}.$ For any $g \in C(K, \mathbb{X})$,
	\begin{itemize}
		\item[(i)] if $f \perp_{\rho_+} g $ then there exists  $k_0 \in M_f$ such that $ f(k_0) \perp_{\rho_+} g(k_0)$
		\item[(ii)]  if $f \perp_{\rho_-} g $ then there exists  $k_0 \in M_f$ such that $ f(k_0) \perp_{\rho_-} g(k_0).$
	\end{itemize}
\end{lemma}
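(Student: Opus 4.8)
The plan is to prove (i); part (ii) follows by the symmetric argument replacing $\rho'_+$ by $\rho'_-$ and suprema by infima. First I would unpack the hypothesis $f\perp_{\rho_+}g$ using Theorem \ref{continuous:characterization}(i), which tells us precisely that
\[
\sup\bigl\{\,y^*(g(k)): k\in K,\ y^*\in Ext(B_{\mathbb{X}^*}),\ y^*(f(k))=\|f\|\,\bigr\}=0.
\]
Since $f\in S_{C(K,\mathbb{X})}$ we have $\|f\|=1$, so the index set is exactly the set of pairs $(k,y^*)$ with $k\in M_f$ and $y^*\in Ext(B_{\mathbb{X}^*})$ attaining the norm of $f(k)$, i.e. $y^*(f(k))=\|f(k)\|=1$. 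The goal is to produce a single $k_0\in M_f$ for which $f(k_0)\perp_{\rho_+}g(k_0)$; by Lemma \ref{functional} applied in the space $\mathbb{X}$, this orthogonality is equivalent to $\sup\{z^*(g(k_0)): z^*\in Ext(J(f(k_0)))\}=0$.

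The core of the argument is a compactness/attainment step. For each fixed $k\in M_f$, set
\[
\alpha(k)=\sup\bigl\{\,y^*(g(k)): y^*\in Ext(B_{\mathbb{X}^*}),\ y^*(f(k))=\|f(k)\|\,\bigr\}=\rho'_+\bigl(f(k),g(k)\bigr),
\]
where the last equality is Lemma \ref{functional} in $\mathbb{X}$. The orthogonality characterization says $\sup_{k\in M_f}\alpha(k)=0$, so $\alpha(k)\le 0$ for every $k\in M_f$. What I need is a point $k_0\in M_f$ where $\alpha(k_0)=0$ exactly, i.e. where the supremum $0$ is \emph{attained}. The natural route is to exhibit a maximizing sequence $(k_n,y_n^*)$ with $k_n\in M_f$, $y_n^*\in Ext(B_{\mathbb{X}^*})$, $y_n^*(f(k_n))=\|f\|$ and $y_n^*(g(k_n))\to 0$, and then pass to a limit. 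Since $K$ is compact, after a subnet we may assume $k_n\to k_0\in K$; because $M_f$ is closed (as $f$ and the norm are continuous) we get $k_0\in M_f$. The functionals $y_n^*$ live in the weak*-compact ball $B_{\mathbb{X}^*}$, so after a further subnet $y_n^*\xrightarrow{w^*}y_0^*\in B_{\mathbb{X}^*}$; passing to the limit in $y_n^*(f(k_n))=1$, using that $f(k_n)\to f(k_0)$ in norm together with the weak* convergence, gives $y_0^*(f(k_0))=1=\|f(k_0)\|$, so $y_0^*\in J(f(k_0))$, and similarly $y_0^*(g(k_n))\to y_0^*(g(k_0))$ forces $y_0^*(g(k_0))=0$.

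The main obstacle is the interaction between weak* limits and the requirement that the limiting functional be an \emph{extreme} point of $J(f(k_0))$, since extreme points are not preserved under weak* limits. I would sidestep this by not demanding extremality of the limit $y_0^*$: once I have produced \emph{some} $y_0^*\in J(f(k_0))$ with $y_0^*(g(k_0))=0$, I combine it with the already-established inequality $\alpha(k_0)\le 0$ (which says every extreme point of $J(f(k_0))$ gives a value $\le 0$). Since $\rho'_+(f(k_0),g(k_0))=\sup_{z^*\in J(f(k_0))}z^*(g(k_0))\ge y_0^*(g(k_0))=0$ while the characterization gives $\rho'_+(f(k_0),g(k_0))=\alpha(k_0)\le 0$, the two bounds force $\rho'_+(f(k_0),g(k_0))=0$, which is exactly $f(k_0)\perp_{\rho_+}g(k_0)$. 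The only care needed is the joint convergence $y_n^*(g(k_n))\to y_0^*(g(k_0))$: I would control it by splitting as $y_n^*(g(k_n))=y_n^*(g(k_n)-g(k_0))+y_n^*(g(k_0))$, bounding the first term by $\|g(k_n)-g(k_0)\|\to 0$ via continuity of $g$ and $\|y_n^*\|\le 1$, and using weak* convergence on the second.
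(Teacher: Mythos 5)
Your proposal is correct and follows essentially the same route as the paper's proof: extract a maximizing sequence of pairs $(k_n,y_n^*)$ from the characterization $\sup\Omega=0$, pass to subnets using compactness of $K$ and weak*-compactness of $B_{\mathbb{X}^*}$, establish the joint convergence $y_n^*(g(k_n))\to y_0^*(g(k_0))$ by the same splitting estimate, and conclude from $y_0^*\in J(f(k_0))$, $y_0^*(g(k_0))=0$ together with the upper bound coming from $\sup\Omega=0$. If anything, you are more explicit than the paper about the one delicate point — that the weak* limit $y_0^*$ need not be extreme, which is resolved by Lemma \ref{functional} equating the supremum over $J(f(k_0))$ with that over $Ext(J(f(k_0)))$ — a point the paper passes over via a set inclusion.
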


\begin{proof}
We only prove (i) as (ii) follows in similar fashion.  Let $f \perp_{\rho_+} g$. Following Theorem \ref{continuous:characterization}, we get $ \sup \Omega =0,$ where  
  $$\Omega = \{ y^*(g(k)): k \in K, y^* \in Ext(B_{\mathbb{X}^*}), y^*(f(k))= 1\}.$$  This implies  $y_n^*(g(k_n)) \to 0, $ where $k_n \in K, y_n^* \in Ext(B_{\mathbb{X}^*}) $ such that $y_n^*(f(k_n)) \to 1.$ As $K$ is compact, the sequence $\{k_n\}$ has a subnet $\{k_\lambda\}_{\lambda \in \Lambda} \subset \{k_n\}$ such that $k_\lambda \to k_0.$  
   Consider the net $\{y_\lambda^* \}_{\lambda \in \Lambda} \subset y_n^*$. As $B_{\mathbb{X}^*} $ is weak*-compact, the net  $\{y_\lambda^* \}_{\lambda \in \Lambda}$ has a subnet  $\{y_{\lambda_\alpha}^* \}$ such that $y_{\lambda_\alpha}^* \overset{w^*}{\longrightarrow} y_0^* \in B_{\mathbb{X}^*}.$ As $g$ is continuous, $g(k_{\lambda_\alpha}) \to g(k_0).$ Observe that 
   \begin{eqnarray*}
   	|y_{\lambda_\alpha}^*(g(k_{\lambda_\alpha})) -y_0^*(g(k_0))| &\leq& |y_{\lambda_\alpha}^*(g(k_{\lambda_\alpha})) - y_{\lambda_\alpha}^*(g(k_0)) + y_{\lambda_\alpha}^*(g(k_0))- y_0^*(g(k_0))| \\
   	& \leq & \|y_{\lambda_\alpha}^* \| \|(g(k_{\lambda_\alpha})) - g(k_0)\| + |y_{\lambda_\alpha}^*(g(k_0))- y_0^*(g(k_0))|
   \end{eqnarray*}
  Since  $y_{\lambda_\alpha}^* \overset{w^*}{\longrightarrow} y_0^*$,  we get $y_{\lambda_\alpha}^*(g(k_0)) \to y_0^*(g(k_0)).$ Therefore, $y_{\lambda_\alpha}^*(g(k_{\lambda_\alpha})) \to y_0^*(g(k_0))$ and $y_0^*(g(k_0)) =0.$  Since $f$ is continuous and $y_n^*(f(k_n)) \to 1$, proceeding similarly we get $y_0^*(f(k_0)) =1.$ This implies $k_0 \in M_f$ and $y_0^* \in J(f(k_0)).$ 
  So, $$ 0 \in \{y^*(g(k_0)): y^* \in J(f(k_0))\}.$$ Since $  \{y^*(g(k_0)): y^* \in J(f(k_0))\} \subset \Omega$ and $\sup \Omega=0,$ then $$\sup \{y^*(g(k_0)): y^* \in J(f(k_0))\}=0.$$ 
  This implies $f(k_0) \perp_{\rho_+} g(k_0).$
\end{proof}
			
			\begin{lemma}\label{lemma:singleton}
		Let $K$ be a  compact Hausdorff space and let $\mathbb{X}$ be a Banach space. Suppose that $f \in C(K, \mathbb{X})$ be such that $M_f=\{k_0\},$ for some $k_0 \in K.$ Then for any $g \in C(K, \mathbb{X})$,
		\begin{itemize}
			\item[(i)]  $f \perp_{\rho_+} g$ if and only if $f(k_0) \perp_{\rho_+} g(k_0)$
				\item[(ii)]  $f \perp_{\rho_-} g$ if and only if $f(k_0) \perp_{\rho_-} g(k_0)$
					\item[(iii)]  $f \perp_{\rho} g$ if and only if $f(k_0) \perp_{\rho} g(k_0).$
		\end{itemize}
			\end{lemma}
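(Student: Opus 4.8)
The plan is to prove Lemma \ref{lemma:singleton} by leveraging the characterization of the norm derivatives given in Theorem \ref{continuous:characterization}, and crucially exploiting the hypothesis that the norm attainment set $M_f$ is the singleton $\{k_0\}$. Since all three parts share the same mechanism, I would prove (i) in full and indicate that (ii) and (iii) follow identically (with (iii) being a formal consequence of (i) and (ii) via the definition $\rho'=\frac{1}{2}(\rho'_++\rho'_-)$). Throughout I may assume without loss of generality that $\|f\|=1$, since $\rho_+$-orthogonality is homogeneous.

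For the forward direction of (i), the work is already essentially done by Lemma \ref{lemma:1}: if $f \perp_{\rho_+} g$, there exists $k' \in M_f$ with $f(k') \perp_{\rho_+} g(k')$, and since $M_f = \{k_0\}$ we must have $k' = k_0$, giving $f(k_0) \perp_{\rho_+} g(k_0)$ immediately. So the genuine content lies in the converse. Here I would assume $f(k_0) \perp_{\rho_+} g(k_0)$ and aim to show $\sup \Omega = 0$, where $\Omega = \{ y^*(g(k)) : k \in K, y^* \in Ext(B_{\mathbb{X}^*}), y^*(f(k)) = 1\}$. The hypothesis $f(k_0)\perp_{\rho_+}g(k_0)$ tells me (via Lemma \ref{functional}) that $\sup\{y^*(g(k_0)) : y^* \in J(f(k_0))\} = 0$, which controls the contribution of the single point $k_0$ to $\Omega$. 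The difficulty is that a priori $\Omega$ could contain values $y^*(g(k))$ arising from points $k \neq k_0$ with $y^*(f(k)) = 1$, or from limiting behaviour near $k_0$.

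The main obstacle, therefore, is to rule out contributions to $\sup \Omega$ coming from points $k \neq k_0$. This is exactly where $M_f = \{k_0\}$ does the heavy lifting: if $k \neq k_0$ and $y^* \in Ext(B_{\mathbb{X}^*})$ satisfies $y^*(f(k)) = 1$, then $1 = y^*(f(k)) \leq \|f(k)\| < \|f\| = 1$, a contradiction. Hence no such pair $(k, y^*)$ with $k \neq k_0$ can occur, and every element of $\Omega$ must come from $k = k_0$ with $y^* \in Ext(B_{\mathbb{X}^*})$ satisfying $y^*(f(k_0)) = 1$, i.e. $y^* \in Ext(J(f(k_0)))$. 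Consequently $\Omega = \{ y^*(g(k_0)) : y^* \in Ext(J(f(k_0)))\}$, and by Lemma \ref{functional} its supremum equals $\sup\{y^*(g(k_0)) : y^* \in J(f(k_0))\} = \rho'_+(f(k_0), g(k_0)) = 0$. By Theorem \ref{continuous:characterization}(i) this gives $f \perp_{\rho_+} g$, completing the converse.

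I should double-check one subtlety in this clean argument: in Theorem \ref{continuous:characterization} the defining set $\Omega$ uses the literal condition $y^*(f(k)) = \|f\|$ at individual points rather than a limiting condition (that refinement was needed only in the $\ell_\infty$ setting of Proposition \ref{prop:l_infty}, where the subnet/weak*-limit machinery of Lemma \ref{lemma:1} enters). Because the continuous-function characterization already records exact equality, no passage to limits is required here, and the singleton hypothesis collapses $\Omega$ directly. With (i) established, part (ii) follows by replacing $\sup$ with $\inf$ and $\rho_+$ with $\rho_-$ verbatim, and part (iii) follows since $f \perp_\rho g \iff \sup\Omega + \inf\Omega = 0$, which under $M_f = \{k_0\}$ reduces to $\rho'_+(f(k_0),g(k_0)) + \rho'_-(f(k_0),g(k_0)) = 0$, i.e. $f(k_0) \perp_\rho g(k_0)$ by Lemma \ref{B-J}.
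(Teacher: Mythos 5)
Your proposal is correct and follows essentially the same route as the paper: both arguments hinge on the observation that $M_f=\{k_0\}$ forces every pair $(k,y^*)$ with $y^*(f(k))=\|f\|$ to have $k=k_0$, so the set $\Omega$ collapses to $\{y^*(g(k_0)):y^*\in Ext(J(f(k_0)))\}$ and hence $\rho'_\pm(f,g)=\rho'_\pm(f(k_0),g(k_0))$, which the paper uses directly for all three parts. Your only deviations are cosmetic: the detour through Lemma \ref{lemma:1} for the forward direction is unnecessary (your set identity already gives both directions at once), and the final appeal to Lemma \ref{B-J} in part (iii) should instead be to the definition $\rho'=\frac{1}{2}(\rho'_++\rho'_-)$.
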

			
			\begin{proof}
(i)				For any $g \in C(K, \mathbb{X}),$ following Proposition \ref{prop:continuous}, we get
					$$ \rho_{+}'(f,g)= \sup \{ y^*(g(k)): k \in K, y^* \in Ext(B_{\mathbb{X}^*}), y^*(f(k))=\|f\|\}.$$
					Since $M_f=\{k_0\}$, it is immediate that 
				$$\rho_{+}'(f,g)= \sup \{ y^*(g(k_0)):  y^* \in Ext(B_{\mathbb{X}^*}), y^*(f(k_0))=\|f\|\}=\rho_{+}'(f(k),g(k)). $$
				Similarly, $\rho_{-}'(f,g)=\rho_{-}'(f(k),g(k)).$        
			 So $f \perp_{\rho_+} g \iff  \rho_{+}'(f,g) = 0 \iff  \rho_{+}'(f(k),g(k))=0    \iff f(k) \perp_{\rho_+} g(k). $ 
			
			(ii) and (iii) follows similarly.\\
			\end{proof}
		
		\section*{\textbf{Symmetricity of $\rho$-orthogonality in function spaces} }
		
		We begin this section with a sufficient condition for $(\rho_{+})$-symmetric elements in $\ell_{\infty}(K, \mathbb{X}).$
		
		\begin{theorem}\label{left:bounded}
			Let $K$ be a non-empty set and let $\mathbb{X}$ be a Banach space. Suppose that $f \in S_{\ell_{\infty}(K, \mathbb{X})}$  satisfies the following: 
			\begin{itemize}
				\item[(i)] there exists     $k_0 \in K$ such that $f(k_0) \in S_{\mathbb{X}}$ and $f(k)=0,~ \forall k (\neq k_0) \in K .$
				
				\item[(ii)] $f(k_0)$ is $(\rho_+)$-left symmetric.
			\end{itemize}   
			Then $f$ is $(\rho_+)$-left symmetric.
	\end{theorem}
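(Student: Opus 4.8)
The plan is to transfer the whole problem to the single coordinate $k_0$, invoke the $(\rho_+)$-left symmetry of $f(k_0)$ there, and then push the resulting orthogonality back up to $\ell_{\infty}(K,\mathbb{X})$. Fix any $g \in \ell_{\infty}(K,\mathbb{X})$ with $f \perp_{\rho_+} g$; the goal is to deduce $g \perp_{\rho_+} f$. We may assume $g \neq 0$, since otherwise $g \perp_{\rho_+} f$ holds trivially. The single computational engine throughout will be Proposition \ref{prop:l_infty}, which expresses $\rho'_+$ as $\sup \Omega$ for the appropriate set $\Omega$ of limits $\lim y_n^*(\cdot(k_n))$.

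First I would establish the reduction $\rho'_+(f,g) = \rho'_+(f(k_0),g(k_0))$, and in particular that $f \perp_{\rho_+} g$ forces $f(k_0)\perp_{\rho_+} g(k_0)$. This is exactly where hypothesis (i) enters: since $f$ vanishes off $k_0$ and $\|f\|=\|f(k_0)\|=1$, the constraint $\lim_n y_n^*(f(k_n))=\|f\|$ appearing in $\Omega$ cannot be met along indices with $k_n\neq k_0$ (there $y_n^*(f(k_n))=0$), so $k_n=k_0$ eventually and $y_n^*(f(k_0))\to 1$. Passing to a weak*-cluster point $y_0^*$ of $\{y_n^*\}$ gives $y_0^*\in J(f(k_0))$, so each element of $\Omega$ is of the form $y_0^*(g(k_0))$ with $y_0^*\in J(f(k_0))$; conversely constant sequences built from $Ext(J(f(k_0)))\subseteq Ext(B_{\mathbb{X}^*})$ (the latter inclusion because $J(f(k_0))$ is a face of $B_{\mathbb{X}^*}$) realise the reverse inequality. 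By Lemma \ref{functional} this yields $\sup\Omega=\rho'_+(f(k_0),g(k_0))$. This step parallels Lemma \ref{lemma:singleton} but is cleaner, as $f$ is genuinely supported on $\{k_0\}$. Applying hypothesis (ii) to $f(k_0)\perp_{\rho_+} g(k_0)$ then gives $g(k_0)\perp_{\rho_+} f(k_0)$, i.e. $\rho'_+(g(k_0),f(k_0))=\sup\{y^*(f(k_0)):y^*\in J(g(k_0))\}=0$.

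The remaining task, and the main obstacle, is to prove $\rho'_+(g,f)=\sup\Omega'=0$, where $\Omega'$ is the set of Proposition \ref{prop:l_infty} with the roles of $f$ and $g$ interchanged. The difficulty is that $g$, unlike $f$, may attain or approach its norm at points other than $k_0$, so the reduction of the previous paragraph no longer applies verbatim. For the upper bound I would take any admissible sequence $(k_n,y_n^*)$ with $y_n^*(g(k_n))\to\|g\|$ and split according to whether $k_n=k_0$ holds infinitely often. Along indices with $k_n\neq k_0$ one has $y_n^*(f(k_n))=0$; along a cofinal block with $k_n=k_0$ the constraint forces $\|g(k_0)\|=\|g\|$, and a weak*-cluster point $y_0^*\in J(g(k_0))$ yields $\lim y_n^*(f(k_0))=y_0^*(f(k_0))\le \rho'_+(g(k_0),f(k_0))=0$. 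Hence every element of $\Omega'$ is $\le 0$.

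For the matching lower bound I would exhibit an admissible norming sequence whose value is $0$, distinguishing two cases. If $g$ attains its norm at $k_0$, then constant sequences drawn from $Ext(J(g(k_0)))\subseteq Ext(B_{\mathbb{X}^*})$ produce values approaching $\rho'_+(g(k_0),f(k_0))=0$. If $g$ does not attain its norm at $k_0$, then $\|g(k_0)\|<\|g\|$ and any sequence $k_n\neq k_0$ with $\|g(k_n)\|\to\|g\|$, together with near-norming extreme functionals $y_n^*$ (available since $\|x\|=\sup_{y^*\in Ext(B_{\mathbb{X}^*})}y^*(x)$), is admissible and satisfies $y_n^*(f(k_n))=0$. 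In either case $\sup\Omega'\ge 0$, so combined with the upper bound $\sup\Omega'=0$, giving $g\perp_{\rho_+} f$ and completing the argument. The delicate points to watch are the passage to weak*-cluster points in $J(g(k_0))$ using weak*-compactness of $B_{\mathbb{X}^*}$, and the clean handling of the dichotomy of norm attainment at $k_0$ versus away from it.
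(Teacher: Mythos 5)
Your proof is correct and follows essentially the same route as the paper's: both arguments reduce $f \perp_{\rho_+} g$ to $f(k_0) \perp_{\rho_+} g(k_0)$ using that $f$ is supported at $k_0$, invoke the $(\rho_+)$-left symmetry of $f(k_0)$ to get $g(k_0) \perp_{\rho_+} f(k_0)$, and then compute $\rho'_+(g,f)$ via the dichotomy of admissible sequences hitting or avoiding $k_0$, with weak*-cluster points landing in $J(g(k_0))$. If anything, you are slightly more careful than the paper on the lower bound $\sup \Omega' \geq 0$ (the norm-attainment dichotomy at $k_0$, with nearly norming extreme functionals in the non-attaining case), which the paper absorbs into its $\max\{0,\cdot\}$ formula without explicit justification.
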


		\begin{proof}
		Let $f \perp_{\rho_+} g$ and $\|g\|=1.$  To prove the theorem we need to show $g \perp_{\rho_+} f.$ For this applying Theorem \ref{bounded:characterization} we need to show that $\sup \Omega=0,$ where 
			\[
			\Omega=  \bigg\{ \lim y_n^*(f(k_n)): k_n \in K, y_n^* \in Ext(B_{\mathbb{X}^*}), \forall n \in \mathbb{N}, \lim y_n^*(g(k_n))= 1\bigg\}.
			\]
			Let $\{k_n\} \subset K$ be a sequence such that $\lim y_n^*(g(k_n))= 1.$ Then there are two possibilities either  $k_n \neq k_0,$ for all but finitely many $n \in \mathbb{N}$ or 	$\{k_n\}$ has a subsequence $\{k_{n_r}\}$ such that $k_{n_r}= k_0,$ for every $r \in \mathbb{N}.$ 
			For the first possibility, it is clear that $\lim y_n^*(f(k_n))=0$ as $f(k)=0, \forall k \neq k_0.$ 
			
			Otherwise, $1= \lim y_{n_r}^*(g(k_{n_r}))= \lim y_{n_r}^* (g(k_0)).$ Without loss of generality we assume that $ \lim y_n^*(g(k_0))=1.$
				Since $B_{\mathbb{X}^*}$ is weak*-compact, it follows that there exists a weak*-cluster point $y^* \in B_{\mathbb{X}^*}$ of the sequence $\{y_{n}^*\}.$ This implies the sequence $\{y_{n}^*(g(k_0))\}$ has a subsequence $\{y_{n_t}^* (g(k_0))\}$ such that $y_{n_t}^*(g(k_0)) \longrightarrow y^*(g(k_0)).$  This concludes that $y^*(g(k_0))=1= \|g\|.$ Therefore, $\|g(k_0)\|=\|g\|$ and $y^* \in J(g(k_0)).$ Proceeding similarly $\lim y_n^*(f(k_n))= y^*(f(k_0)).$ Therefore
				\begin{eqnarray*}
						\sup \Omega &=& \max\bigg\{0, \sup\{  y^*(f(k_0)):  y^* \in Ext(B_{\mathbb{X}^*}),  y^*(g(k_0))= 1 \} \bigg\} \\
						 &=& \max \bigg\{0, \sup\{  y^*(f(k_0)):  y^* \in J(g(k_0)) \} \bigg\}.
				\end{eqnarray*}
				We next show that $\sup\{  y^*(f(k_0)):  y^* \in J(g(k_0)) \}=0.$
	As $f \perp_{\rho_+} g$ and $f(k)=0, \forall k \neq k_0,$ applying Theorem \ref{bounded:characterization} it is easy to observe that $\sup \Omega_1 =0.$
				\[
			\Omega_1=\bigg\{ \lim z_n^*(g(k_0)): z_n^* \in Ext(B_{\mathbb{X}^*}), \forall n \in \mathbb{N}, \lim z_n^*(f(k_0))=1\bigg\}.
				\]
		Again using weak*-compactness of $B_{\mathbb{X}^*}$ and proceeding similarly, it is easy to observe that 
		\[
	\sup \Omega_1=	\sup \{ z^*(g(k_0)): z^* \in Ext(B_{\mathbb{X}^*}),  z^*(f(k_0))=1 \}=0.
		\]	
		This implies that $
			\rho_{+}'(f(k_0), g(k_0))=	0.$
			Following Lemma \ref{functional}, we get $f(k_0) \perp_{\rho_+} g(k_0).$ Since $f(k_0)$ is $(\rho_+)$-left symmetric, we get $g(k_0) \perp_{\rho_+} f(k_0)$. Again following Lemma \ref{functional}, we get  $$  \sup\{  y^*(f(k_0)):  y^* \in J(g(k_0)) \}= \rho_{+}'(g(k_0), f(k_0))=0.$$ Therefore we obtain   $\sup \Omega=0.$ This completes the theorem. \\
			
			
		\end{proof}

		As the space $C(K, \mathbb{X})$ is a subspace of $\ell_{\infty}(K, \mathbb{X}),$ so if an element $f \in C(K, \mathbb{X})$ is $(\rho_{+})$-left symmetric in $\ell_{\infty}(K, \mathbb{X}),$ then it will be naturally left symmetric in $C(K, \mathbb{X}).$
			Thus if a continuous function $f$ satisfies the sufficient condition of Theorem \ref{left:bounded}, then $f$ is also $(\rho_+)$-left symmetric  in $C(K, \mathbb{X}).$ In the next theorem, we show that the conditions are necessary for an element  $f \in C(K, \mathbb{X})$ to be $(\rho_{+})$-left symmetric under the additional condition that $K$ is perfectly normal. 
		 Let us recall that a topological space $K$ is said to be perfectly normal if  $K$ is normal and every closed set of $K$ is a $G_\delta$ set.
		
		\begin{theorem}\label{left:continuous}
			Let $K$ be a compact, perfectly normal space and let $\mathbb{X}$ be a Banach space. Then   $f \in S_{C(K, \mathbb{X})}$  is $(\rho_+)$-left symmetric if and only if $f$ satisfies the following : 
			\begin{itemize}
				\item[(i)] there exists  $k_0 \in K$ such that $f(k_0) \in S_{\mathbb{X}}$ and  $f(k)=0,$ $\forall k_0 \neq k \in K .$
				\item[(ii)] $f(k_0) $ is $(\rho_+)$-left symmetric.
			\end{itemize} 
			
		\end{theorem}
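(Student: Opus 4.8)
The statement is an ``if and only if'', but the sufficiency is essentially free: conditions (i)--(ii) are exactly the hypotheses of Theorem \ref{left:bounded}, so $f$ is $(\rho_+)$-left symmetric in $\ell_\infty(K,\mathbb{X})$, and since $C(K,\mathbb{X})$ is a subspace containing $f$, it remains $(\rho_+)$-left symmetric in $C(K,\mathbb{X})$. Hence the whole content lies in the necessity, where I would spend the entire proof. Throughout I normalise $\|f\|=1$ and use the working description of orthogonality coming from Theorem \ref{continuous:characterization} together with Lemma \ref{functional}: since the constraint $y^*(f(k))=\|f\|=1$ forces $k\in M_f$ and $y^*\in J(f(k))$, one has, for every $g$, that $f\perp_{\rho_+}g$ if and only if $\sup_{k\in M_f}\rho'_+(f(k),g(k))=0$, where $\rho'_+(f(k),g(k))=\sup\{y^*(g(k)):y^*\in J(f(k))\}$. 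This turns a statement about $C(K,\mathbb{X})$ into one about the pointwise norm derivatives on the closed set $M_f$, which is what I will exploit. The plan is to assume $f$ is $(\rho_+)$-left symmetric and prove, in order, that (a) $M_f$ is a singleton $\{k_0\}$, (b) $f(k)=0$ for every $k\ne k_0$, and (c) $f(k_0)$ is $(\rho_+)$-left symmetric in $\mathbb{X}$; together (a)--(c) are precisely (i)--(ii).

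For (a), suppose $M_f$ contains two distinct points $k_1,k_2$ and write $u_1=f(k_1)\in S_{\mathbb{X}}$. The key device is to look for the violating $g$ inside the one-parameter family $g=-h\,f$ with $h\colon K\to[0,1]$ continuous, because then for every $k\in M_f$ one computes $\rho'_+(f(k),g(k))=-h(k)\le 0$ automatically. Thus $f\perp_{\rho_+}g$ as soon as $h$ vanishes somewhere on $M_f$, and the behaviour of $g$ off $M_f$ is irrelevant. I would choose $h$ with $h(k_1)=1$, $h(k_2)=0$, and $h(k)<1$ for all $k\ne k_1$: perfect normality supplies a continuous $p\ge 0$ whose zero set is exactly $\{k_1\}$, and taking $h=(1-p)\,q$ with $q$ a Urysohn function satisfying $q(k_1)=1$, $q(k_2)=0$ does the job. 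Then $M_g=\{k_1\}$ and $g(k_1)=-u_1$, so Lemma \ref{lemma:singleton} reduces $g\perp_{\rho_+}f$ to $-u_1\perp_{\rho_+}u_1$; but $\rho'_+(-u_1,u_1)=-1\ne 0$, whence $g\not\perp_{\rho_+}f$, contradicting left symmetry.

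Parts (b) and (c) follow the same template with simpler data. For (b), with $M_f=\{k_0\}$, suppose $v=f(k_2)\ne0$ for some $k_2\ne k_0$, put $w=v/\|v\|$, and take $g=\phi\,w$ where $\phi\colon K\to[0,1]$ peaks strictly and only at $k_2$ with $\phi(k_0)=0$, built again from a function vanishing exactly at $k_2$ and a Urysohn function; since $g(k_0)=0$ we get $f\perp_{\rho_+}g$, while $M_g=\{k_2\}$ and $\rho'_+(w,v)=\|v\|>0$ give $g\not\perp_{\rho_+}f$ through Lemma \ref{lemma:singleton}, a contradiction. For (c), given any $w$ with $f(k_0)\perp_{\rho_+}w$, take $g=\psi\,w$ with $\psi$ peaking only at $k_0$ and $\psi(k_0)=1$; Lemma \ref{lemma:singleton} applied to $f$ gives $f\perp_{\rho_+}g$, left symmetry of $f$ gives $g\perp_{\rho_+}f$, and Lemma \ref{lemma:singleton} applied to $g$ (whose norm attainment set is $\{k_0\}$) converts this back to $w\perp_{\rho_+}f(k_0)$; as $w$ is arbitrary, $f(k_0)$ is $(\rho_+)$-left symmetric.

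The main obstacle, and the place where perfect normality is indispensable, is the vector-valued geometry of $M_f$: a priori $M_f$ is only a closed set and $k_0$ need not be isolated, so I cannot simply ``switch off'' $g$ near the chosen point while keeping it a genuine perturbation. Two ideas dissolve this. First, the substitution $g=-h\,f$ forces the local derivative $\rho'_+(f(k),g(k))=-h(k)$ to be nonpositive on all of $M_f$ simultaneously, so that $f\perp_{\rho_+}g$ is guaranteed by a single vanishing value of $h$ regardless of the shape of $M_f$. Second, perfect normality manufactures continuous functions with a prescribed zero set, which lets me force the norm attainment set $M_g$ of each test function to be a single point and thereby bring the pointwise reduction of Lemma \ref{lemma:singleton} to bear. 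Verifying that these $g$ are genuinely continuous with the claimed singleton norm attainment sets, and checking the elementary values $\rho'_+(-u_1,u_1)=-1$ and $\rho'_+(w,v)=\|v\|$, are the only computational points and are routine.
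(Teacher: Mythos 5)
Your proof is correct, and although it shares the paper's overall strategy---contradiction arguments built from the strong form of Urysohn's lemma available in perfectly normal spaces, plus the pointwise reduction of Lemma~\ref{lemma:singleton}---the key constructions are genuinely different. The paper establishes condition (i) in one stroke: fixing $k_0\in M_f$ and $k_1\neq k_0$ with $f(k_1)\neq 0$, it tests against $g=-\alpha(\cdot)\,f(k_1)/\|f(k_1)\|$ supported in a neighbourhood $U$ of $k_1$ chosen so small that $\|f(k)-f(k_1)\|<\tfrac{1}{2}$ there; the verification that $f\perp_{\rho_+}g$ then rests on the estimate $y^*(f(k_1))>\tfrac{1}{2}$ for every $k\in M_f\cap U$ and every $y^*\in Ext(J(f(k)))$, which is what tames the a priori unknown set $M_f$. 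Your device $g=-h\,f$ in step (a) eliminates that estimate altogether: since $g(k)$ is a pointwise scalar multiple of $f(k)$, the identity $\rho'_+(x,\lambda x)=\lambda\|x\|^2$ gives $\rho'_+(f(k),g(k))=-h(k)$ on all of $M_f$ with no continuity argument, and your step (b) then becomes trivial precisely because $M_f$ is already known to be a singleton. A further small advantage: in the paper's proof of condition (ii) the test function is $g=-\alpha(\cdot)\,w$, so checking $f(k_0)\perp_{\rho_+}g(k_0)$ requires passing from $f(k_0)\perp_{\rho_+}w$ to $f(k_0)\perp_{\rho_+}(-w)$, which is delicate because $\rho'_+(x,-w)=-\rho'_-(x,w)$; your positive multiple $g=\psi\,w$ in step (c) sidesteps this sign issue entirely. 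What the paper's route buys in exchange is economy (two steps instead of three) and a single template of test function that it reuses in the later theorems on right symmetry and $\rho$-symmetry. Two loose ends in your write-up are routine: rescale $p$ into $[0,1]$ so that $h=(1-p)q$ genuinely maps into $[0,1]$, and dispose of $w=0$ in step (c) separately (it is $\rho_+$-orthogonal to everything), since $M_g$ is a singleton only when $w\neq 0$.
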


		\begin{proof}
			Since the sufficient part follows from Theorem \ref{left:bounded}, we prove only the necessary part. 
			
		As $f \in S_{C(K, \mathbb{X})},$ $M_f$ is non-empty. Let $k_0 \in M_f.$ We claim that $f(k)=0, ~\forall k \neq k_0.$ Suppose on the contrary there exists $k_1 \neq k_0 $ such that    $f(k_1) \neq 0.$
		Let $V= \{ x \in \mathbb{X}: \|f(k_1)-x\| < \frac{1}{2}\}.$ Clearly, $V$ is open. As $f$ is continuous, there exists an open set $U$ containing $k_1$ such that $f(U) \subset  V.$  
			 As $K$ is   Hausdorff,  we can assume $k_0 \notin U$.  Since $K$ is perfectly normal and $\{k_1\},   K \setminus U $ are two disjoint closed sets in $K$, then using Urysohn's lemma (see \cite{Munkres}), there exists a continuous function $\alpha: K \to [0,1]$ such that $\alpha^{-1}(\{1\})= \{k_1\}$ and $\alpha^{-1}(\{0\})= K \setminus U.$ Define $g: K \to \mathbb{X}$ as  $$g(k)= -  \alpha(k)\frac{f(k_1)}{ \|f(k_1) \|}  ,~ \forall k \in K.$$  Clearly, $g$ is continuous. 
		Consider 
		\[
		\Omega_1= \bigg\{  y^*(g(k)): k \in K, y^* \in Ext(B_{\mathbb{X}^*}),   y^*(f(k))= \|f\|\bigg\}.
		\]
		Observe that $g(k)=0, \forall k \in K \setminus U.$ So, 
		\[
		\sup \Omega_1= \max \bigg\{ 0, \sup\{ y^*(g(k)): k \in U, y^* \in Ext(B_{\mathbb{X}^*}),   y^*(f(k))= \|f\| \}\bigg\}.
		\]
	Observe that $y^*(g(k)) = -\alpha(k) \frac{1}{\|f(k_1)\|} y^*(f(k_1)). $ 	Let $k \in U$ and $y^* \in Ext(B_{\mathbb{X}^*})$ such that $  y^*(f(k))= \|f\| .$
	 Then
	 $$ |y^*(f(k_1))-1| =|y^*(f(k_1)) -y^*(f(k))| \leq \|f(k_1)- f(k)\| < \frac{1}{2}.  $$
		So, $y^*(f(k_1)) > \frac{1}{2}.$  Clearly, $y^*(g(k)) < 0,$ for any $ k \in U $ and for any  $y^* \in Ext(B_{\mathbb{X}^*})$ such that $  y^*(f(k))= \|f\| .$ Therefore, 
		$$\sup\{ y^*(g(k)): k \in U, y^* \in Ext(B_{\mathbb{X}^*}),   y^*(f(k))= \|f\| \} \leq 0. $$
		 Hence $\sup \Omega_1=0.$ Following Theorem \ref{continuous:characterization}, we get $f \perp_{\rho_+} g.$
		
		On the other hand, as $\alpha^{-1} (\{1\})=\{k_1\}, \alpha^{-1}(\{0\})= K \setminus U,$ so  we have $g(k_1)=- \frac{f(k_1)}{ \|f(k_1) \|} ,$   and  $g(k)=0,$ $\forall k \in K \setminus U.$ Moreover, $\|g\|=1$ and $M_g=\{k_1\}.$ Observe that whenever $y^* \in J(g(k_1)),$ we get $y^*(f(k_1))= - \|f(k_1)\|.$ So, 
		\[
		\bigg\{  y^*(f(k)): k \in K, y^* \in Ext(B_{\mathbb{X}^*}),   y^*(g(k))= \|g\|\bigg\} = \{- \|f(k_1)\|\}.
		\] 
		As $\|f(k_1)\| \neq 0,$ again following Theorem \ref{continuous:characterization}, $g \not \perp_{\rho_+} f.$ This contradicts the fact that $f$ is $(\rho_{+})$-left symmetric. Thus  $f$ satisfies the condition (i).\\
		
	Next we show that $f$ satisfies the condition (ii). Suppose on the contrary that $f(k_0)$ is not $(\rho_+)$-left symmetric. This implies that there exists $w \in S_\mathbb{X}$ such that $ f(k_0) \perp_{\rho_+} w$ and $w \not \perp_{\rho_+} f(k_0)$.  Let $U \subset K$ be an open set containing $k_0.$ As $K$ is perfectly normal and $\{k_0\},   K \setminus U $ are two disjoint closed sets in $K$, so by the Urysohn's lemma (see \cite{Munkres}), there exists a continuous function $\alpha: K \to [0,1]$ such that $\alpha^{-1}(\{1\})= \{k_0\}$ and $\alpha^{-1}(\{0\})= K \setminus U.$ Define $g: K \to \mathbb{X}$ as  $$g(k)= -  \alpha(k)w  ,~ \forall k \in K.$$  Clearly, $g$ is continuous.  As $f$ satisfies the condition (i), so $M_f=\{k_0\}$ and  since $f(k_0) \perp_{\rho_+} g(k_0),$  
		following Lemma \ref{lemma:singleton}, we get $f \perp_{\rho_+} g.$   Moreover,    $M_{g}= \{k_0\}$ and $g(k_0) \not \perp_{\rho_+} f(k_0)$. Again 
	using Lemma \ref{lemma:singleton}, we get $g \not \perp_{\rho_+} f.$ This contradicts the fact that $f$ is $(\rho_+)$-left symmetric.
		\end{proof}

			As the space $C(K, \mathbb{X})$ is a subspace of $\ell_{\infty}(K, \mathbb{X}),$ therefore to be a $(\rho_+)$-left symmetric, a bounded function in $\ell_{\infty}(K, \mathbb{X})$ has to satisfy the necessary conditions of Theorem \ref{left:continuous}. Therefore using Theorem \ref{left:bounded} and the necessary conditions of Thoerem \ref{left:continuous} we obtain the complete characterization of $(\rho_+)$-left symmetric functions in $\ell_{\infty}(K, \mathbb{X}).$

		\begin{theorem}\label{left:infty}
			Let $K$ be a non-empty set and let $\mathbb{X}$ be a Banach space. Then $f \in S_{\ell_{\infty}(K, \mathbb{X})}$ is $(\rho_+)$-left symmetric if and only if $f$ satisfies the following: 
		\begin{itemize}
			\item[(i)] there exists a  $k_0 \in K$ such that $f(k_0) \in S_{\mathbb{X}}$ and $f(k)=0, \forall k_0 \neq k \in K .$
			
			\item[(ii)] $f(k_0)$ is $(\rho_+)$-left symmetric.
		\end{itemize}   
		\end{theorem}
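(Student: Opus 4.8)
The sufficiency is precisely Theorem \ref{left:bounded}, so the entire content lies in the necessity: assuming $f \in S_{\ell_{\infty}(K,\mathbb{X})}$ is $(\rho_+)$-left symmetric, I must recover (i) and (ii). The plan is to run the necessity argument of Theorem \ref{left:continuous} directly inside $\ell_{\infty}(K,\mathbb{X})$, where the role played there by Urysohn bump functions is now taken over by the far simpler functions supported at a single point, and where point-evaluations are replaced by the limit description of $\rho'_+$ supplied by Theorem \ref{bounded:characterization}. The basic gadget is, for a fixed $k_1$ with $f(k_1)\neq 0$, the function $g$ defined by $g(k_1)=-f(k_1)/\|f(k_1)\|$ and $g(k)=0$ for $k\neq k_1$; then $\|g\|=1$ and $g$ attains its norm only at $k_1$.

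First I would analyze this gadget. Since $g$ attains its norm only at $k_1$, every admissible sequence in the set $\Omega$ computing $\rho'_+(g,f)$ must eventually sit at $k_1$, which forces $\rho'_+(g,f)=-\|f(k_1)\|<0$; hence $g\not\perp_{\rho_+}f$ in all cases. The behaviour of $\rho'_+(f,g)$ is the delicate point and is where the main obstacle lies: the relevant $\Omega$ is built from limits $\lim y_n^*(g(k_n))$ along sequences with $\lim y_n^*(f(k_n))=\|f\|=1$, so I must control which sequences can contribute. As $g$ vanishes off $k_1$, any contributing sequence with $k_n\neq k_1$ eventually gives limit $0$, while one with $k_n=k_1$ (possible only if $\|f(k_1)\|=1$) gives limit $-1$; this shows $\rho'_+(f,g)\le 0$ always, and that $f\perp_{\rho_+}g$ holds precisely when either $\|f(k_1)\|<1$, or $\|f(k_1)\|=1$ but $\sup_{k\neq k_1}\|f(k)\|=1$. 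Verifying existence of the requisite norm-attaining or norm-approaching sequences, together with the choice of extreme functionals $y_n^*\in Ext(B_{\mathbb{X}^*})\cap J(f(k_n))$ (available because $J(x)$ is a weak*-closed face of $B_{\mathbb{X}^*}$, so its extreme points are extreme in $B_{\mathbb{X}^*}$), is the technical heart, precisely because in $\ell_{\infty}(K,\mathbb{X})$ the norm need not be attained.

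With this in hand, condition (i) follows in two steps. If some $k_1$ had $0<\|f(k_1)\|<1$, then $f\perp_{\rho_+}g$ yet $g\not\perp_{\rho_+}f$, contradicting left symmetry; hence $\|f(k)\|\in\{0,1\}$ for every $k$, and since $\|f\|=1$ the value $1$ is attained. If two distinct points $k_1\neq k_2$ both had norm $1$, then placing $g$ at $k_1$ gives $\sup_{k\neq k_1}\|f(k)\|=1$, so again $f\perp_{\rho_+}g$ while $g\not\perp_{\rho_+}f$, a contradiction. Thus there is a unique $k_0$ with $\|f(k_0)\|=1$ and $f\equiv 0$ off $k_0$, which is exactly (i).

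Finally, for (ii) I would exploit that, once (i) holds, the norm of $f$ is approached only at $k_0$, so every admissible sequence defining $\rho'_+(f,g)$ may be taken constantly equal to $k_0$; a short weak*-compactness argument then yields $\rho'_+(f,g)=\rho'_+(f(k_0),g(k_0))$ for every $g$, i.e.\ the singleton reduction of Lemma \ref{lemma:singleton} is valid here as well. Granting this, if $f(k_0)$ were not $(\rho_+)$-left symmetric in $\mathbb{X}$, I pick $w\in S_{\mathbb{X}}$ with $f(k_0)\perp_{\rho_+}w$ but $w\not\perp_{\rho_+}f(k_0)$ and set $g(k_0)=w$, $g\equiv 0$ elsewhere; the reduction converts $f(k_0)\perp_{\rho_+}w$ into $f\perp_{\rho_+}g$ and $w\not\perp_{\rho_+}f(k_0)$ into $g\not\perp_{\rho_+}f$, again contradicting left symmetry. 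Here I would take care with the non-symmetry of $\perp_{\rho_+}$ under negation, so that $g$ must carry $+w$ rather than $-w$. This establishes (ii) and completes the necessity.
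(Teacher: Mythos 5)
Your proposal is correct, but on the necessity half it takes a genuinely different route from the paper. The paper's entire proof of necessity is a two-sentence transfer argument: since $C(K,\mathbb{X})$ is a subspace of $\ell_{\infty}(K,\mathbb{X})$, a $(\rho_+)$-left symmetric bounded function ``has to satisfy the necessary conditions of Theorem \ref{left:continuous}.'' Read literally, that citation only covers the case where $K$ carries a compact, perfectly normal topology and $f$ happens to be continuous; in Theorem \ref{left:infty}, $K$ is a bare set and $f$ an arbitrary bounded function, so the Urysohn-based constructions behind Theorem \ref{left:continuous} are unavailable and the transfer does not literally reach the stated generality. You instead rerun the necessity argument intrinsically in $\ell_{\infty}(K,\mathbb{X})$: the Urysohn bumps collapse to functions supported at a single point, orthogonality is computed through the sequential characterization of Theorem \ref{bounded:characterization}, admissible sequences are controlled by splitting according to whether $k_n$ eventually equals $k_1$, and extreme functionals in $J(\cdot)$ are produced by the face/Krein--Milman argument --- precisely the toolkit the paper itself deploys for Theorem \ref{left:bounded}. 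Your key computations check out: $\rho'_+(g,f)=-\|f(k_1)\|<0$ for the one-point gadget, $f\perp_{\rho_+}g$ exactly when $\|f(k_1)\|<1$ or $\sup_{k\neq k_1}\|f(k)\|=1$, and the singleton reduction $\rho'_+(f,g)=\rho'_+(f(k_0),g(k_0))$ --- an $\ell_{\infty}$-analogue of Lemma \ref{lemma:singleton}, which the paper states only for $C(K,\mathbb{X})$ but implicitly needs here --- is what makes your step (ii) work; your caution to use $+w$ rather than $-w$ is also warranted, since $x\perp_{\rho_+}y$ does not imply $x\perp_{\rho_+}(-y)$. What your approach buys is a self-contained proof valid for arbitrary $K$ and arbitrary bounded $f$; what the paper's buys is brevity, at the cost of a transfer step whose hypotheses are not actually present in the theorem.
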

	
	As an immediate corollary of Theorem \ref{left:continuous} and \ref{left:infty}, we obtain the complete characterization of $(\rho_{+})$-left symmetric points of $C_0(K)$ and $\ell_{\infty}.$

		\begin{cor}
				Let $K$ be a  compact, perfectly normal space. Then $f \in S_{C(K)}$ is $(\rho_+)$-left symmetric if and only if $M_f= \{k_0\}$,
 for some $k_0 \in K$ and $f(k)=0, \forall k \neq k_0.$	
 	\end{cor}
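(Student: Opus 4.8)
The plan is to deduce this corollary directly from Theorem \ref{left:continuous} by specializing it to the scalar base field, since $C(K) = C(K,\mathbb{R})$ in the present (real) setting. Applied with $\mathbb{X}=\mathbb{R}$, Theorem \ref{left:continuous} asserts that $f \in S_{C(K)}$ is $(\rho_+)$-left symmetric if and only if (i) there is some $k_0 \in K$ with $f(k_0) \in S_{\mathbb{R}}=\{-1,1\}$ and $f(k)=0$ for all $k \neq k_0$, and (ii) the scalar $f(k_0)$ is a $(\rho_+)$-left symmetric point of $\mathbb{R}$. So the whole task reduces to interpreting these two conditions in the one-dimensional base field.

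The key simplification I would make is to show that condition (ii) is automatic. For any nonzero $x \in \mathbb{R}$ and any $y \in \mathbb{R}$, a direct computation of the norm derivative gives $\rho'_+(x,y)=xy$: indeed, for $x>0$ and small $t>0$ one has $\|x+ty\|-\|x\|=ty$, while for $x<0$ one gets $\|x+ty\|-\|x\|=-ty$, and in each case multiplying the corresponding limit by $\|x\|$ yields $xy$. Hence, for $x \neq 0$, the relation $x \perp_{\rho_+} y$ holds precisely when $y=0$; and since $\rho'_+(0,x)=0$ always (the factor $\|0\|=0$), the point $0$ is $\rho_+$-orthogonal to everything. Consequently $x \perp_{\rho_+} y$ forces $y=0$, which in turn gives $y \perp_{\rho_+} x$, so every nonzero real number is $(\rho_+)$-left symmetric. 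In particular $f(k_0)=\pm 1$ is always $(\rho_+)$-left symmetric, and condition (ii) carries no information.

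It then remains to translate condition (i) into the statement of the corollary, and this is pure bookkeeping using $f \in S_{C(K)}$, i.e.\ $\|f\|=1$. If (i) holds, then $f$ vanishes off $k_0$, so $\|f\|=|f(k_0)|=1$ and $k_0$ is the unique point where $|f|$ attains the value $\|f\|$; thus $M_f=\{k_0\}$ and $f(k)=0$ for all $k\neq k_0$. Conversely, if $M_f=\{k_0\}$ with $f(k)=0$ for $k\neq k_0$, then $|f(k_0)|=\|f\|=1$ gives $f(k_0)\in S_{\mathbb{R}}$, so (i) holds (and (ii) is automatic by the previous paragraph). Combining the two directions with the equivalence from Theorem \ref{left:continuous} yields the claim.

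I do not anticipate a genuine obstacle here: the corollary is essentially Theorem \ref{left:continuous} read in the scalar case. The only substantive point is the verification that left symmetry is vacuous in the one-dimensional field $\mathbb{R}$, which rests entirely on the elementary identity $\rho'_+(x,y)=xy$ for $x\neq 0$; everything else is a routine reformulation of condition (i) in terms of the norm attainment set $M_f$.
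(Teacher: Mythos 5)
Your proposal is correct and follows exactly the route the paper intends: the paper presents this corollary as an immediate consequence of Theorem \ref{left:continuous}, specialized to $\mathbb{X}=\mathbb{R}$, and your verification that condition (ii) is vacuous (via $\rho'_+(x,y)=xy$ for $x\neq 0$, so every nonzero real is $(\rho_+)$-left symmetric) together with the bookkeeping identifying condition (i) with $M_f=\{k_0\}$ and $f\equiv 0$ off $k_0$ is precisely the omitted "immediate" argument.
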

 	
 	\begin{cor}
 		Let $\widetilde{x}=(x_1, x_2, \ldots) \in S_{\ell_{\infty}}$. Then $\widetilde{x}$ is $(\rho_+)$-left symmetric if and only if $|x_k|=1,$ for some $k \in \mathbb{N}$ and $x_i=0, \forall i \neq k.$
 	\end{cor}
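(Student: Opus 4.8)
The plan is to specialize Theorem \ref{left:infty} to the case $K=\mathbb{N}$ and $\mathbb{X}=\mathbb{R}$, identifying $\ell_\infty$ with $\ell_\infty(\mathbb{N},\mathbb{R})$ and the sequence $\widetilde{x}=(x_1,x_2,\ldots)$ with the bounded function $f$ given by $f(k)=x_k$. Under this identification, condition (i) of Theorem \ref{left:infty} reads precisely as follows: there exists $k_0\in\mathbb{N}$ with $f(k_0)=x_{k_0}\in S_{\mathbb{R}}$, equivalently $|x_{k_0}|=1$, and $f(k)=x_k=0$ for all $k\neq k_0$. This is exactly the coordinate condition appearing in the statement of the corollary.

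It then remains only to check that condition (ii) of Theorem \ref{left:infty}, namely that $x_{k_0}$ is $(\rho_+)$-left symmetric in $\mathbb{R}$, is automatically satisfied and hence imposes no extra restriction. First I would note that $\mathbb{R}$ is smooth, so $J(t)$ is a singleton for every $t\neq 0$; consequently $\rho'_+=\rho'_-=\rho'$ on $\mathbb{R}$ and the relations $\perp_{\rho_+}$, $\perp_{\rho_-}$, $\perp_{\rho}$ and $\perp_B$ all coincide there (this also follows from Theorem \ref{smooth}). Next I would compute orthogonality in one dimension directly: for $s,t\in\mathbb{R}$ one has $s\perp_B t$ if and only if $s=0$ or $t=0$, a relation that is manifestly symmetric in $s$ and $t$. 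Therefore every point of $\mathbb{R}$, and in particular the nonzero scalar $x_{k_0}$, is $(\rho_+)$-left symmetric, so condition (ii) holds trivially.

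Combining these observations, Theorem \ref{left:infty} gives that $\widetilde{x}$ is $(\rho_+)$-left symmetric if and only if the stated condition on its coordinates holds, which completes the proof. The argument is essentially a matter of unwinding the definitions together with an elementary one-dimensional verification, so I do not expect a genuine obstacle; the only point warranting mild care is confirming that every scalar is $(\rho_+)$-left symmetric, which renders condition (ii) vacuous.
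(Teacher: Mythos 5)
Your proposal is correct and follows exactly the paper's route: the paper derives this corollary as an immediate specialization of Theorem \ref{left:infty} to $K=\mathbb{N}$, $\mathbb{X}=\mathbb{R}$, with no further proof given. Your additional verification that condition (ii) of that theorem is vacuous in $\mathbb{R}$ (since one-dimensional orthogonality reduces to $st=0$, which is symmetric) is precisely the detail the paper leaves implicit, and it is carried out correctly.
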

		
	\begin{theorem}\label{right:continuous}
			Let $K$ be a  compact, perfectly normal space. Then   $f \in S_{C(K, \mathbb{X})}$  is $(\rho_+)$-right symmetric if and only if		$f$ satisfies exactly one of the following: 
		\begin{itemize}
			\item[(i)]
	If	 $M_f= \{k_0\}$ then  $f(k_0)$ is $(\rho_{+})$-right symmetric  and  for any $k \neq k_0,$  $w \perp_{\rho_+} f(k) \implies w=0.$ 
		
		\item[(ii) ] Otherwise  for any $k \in K$,   $w \perp_{\rho_+} f(k) \implies w=0$.
		\end{itemize} 
	\end{theorem}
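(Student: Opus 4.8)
The plan is to reduce both directions to pointwise $\rho_+$-orthogonality in $\mathbb{X}$, via the characterization of $\perp_{\rho_+}$ in Theorem \ref{continuous:characterization} together with Lemmas \ref{lemma:1} and \ref{lemma:singleton}, and to realize the abstract symmetry condition through explicit bump functions produced by Urysohn's lemma (available because $K$ is compact and perfectly normal). Since $\perp_{\rho_+}$ is homogeneous, when verifying the implication $g \perp_{\rho_+} f \Rightarrow f \perp_{\rho_+} g$ one may assume $g \in S_{C(K,\mathbb{X})}$, the case $g = 0$ being trivial. I would also record at the outset that $w \perp_{\rho_+} 0$ holds for every $w$, so the hypothesis $w \perp_{\rho_+} f(k) \Rightarrow w = 0$ silently forces $f$ to be nowhere vanishing.

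The sufficiency direction is then short. Given $g \perp_{\rho_+} f$ with $\|g\|=1$, Lemma \ref{lemma:1}(i) (applied with $g$ as the norm-one function) yields $k^* \in M_g$ with $g(k^*) \perp_{\rho_+} f(k^*)$ and $g(k^*) \neq 0$. Under (ii) this contradicts $w \perp_{\rho_+} f(k) \Rightarrow w = 0$, so no nonzero $g$ is $\rho_+$-orthogonal to $f$ and right symmetry holds vacuously. Under (i) the same relation forces $k^* = k_0$ (else it violates the condition at $k^* \neq k_0$); then $g(k_0) \perp_{\rho_+} f(k_0)$, right symmetry of $f(k_0)$ gives $f(k_0) \perp_{\rho_+} g(k_0)$, and since $M_f = \{k_0\}$, Lemma \ref{lemma:singleton}(i) upgrades this to $f \perp_{\rho_+} g$.

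For necessity I would argue by contraposition, building $g$ with $g \perp_{\rho_+} f$ and $f \not\perp_{\rho_+} g$ whenever $f$ violates the stated condition. If $M_f = \{k_0\}$ and $f(k_0)$ fails to be right symmetric, choose $w \in S_\mathbb{X}$ with $w \perp_{\rho_+} f(k_0)$ but $f(k_0) \not\perp_{\rho_+} w$, and let $g$ be a single Urysohn bump at $k_0$ with $g(k_0) = w$ and $M_g = \{k_0\}$; Lemma \ref{lemma:singleton}(i) transfers both pointwise relations. The remaining situation is a point $k_1$ carrying some nonzero $w$ with $w \perp_{\rho_+} f(k_1)$, where $k_1 \neq k_0$ in case (i) and $k_1$ is arbitrary in case (ii). A single bump at $k_1$ is inadequate (if $k_1 \notin M_f$ it can be supported off $M_f$, forcing $f \perp_{\rho_+} g$), so I would instead take the two-bump function $g(k) = \alpha(k) w + \varepsilon \beta(k) f(p)$, where $\alpha$ is a Urysohn bump peaking at $k_1$ with $\alpha^{-1}(1) = \{k_1\}$, $\beta$ is a disjointly supported small bump peaking at a point $p \in M_f$ with $p \neq k_1$ (take $p = k_0$ in case (i), and any point of the non-singleton $M_f$ distinct from $k_1$ in case (ii)), and $\varepsilon > 0$ is small. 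This keeps $M_g = \{k_1\}$, so Lemma \ref{lemma:singleton}(i) gives $g \perp_{\rho_+} f \iff w \perp_{\rho_+} f(k_1)$; on the other hand, choosing an extreme functional $y^* \in Ext(B_{\mathbb{X}^*})$ with $y^*(f(p)) = 1$ (an extreme point of the face $J(f(p))$) yields $y^*(g(p)) = \varepsilon > 0$ with $p \in M_f$, whence $\sup \Omega > 0$ and $f \not\perp_{\rho_+} g$ by Theorem \ref{continuous:characterization}.

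The main obstacle is exactly this last construction: the single natural candidate concentrated at the witnessing point $k_1$ certifies $g \perp_{\rho_+} f$ but not the failure $f \not\perp_{\rho_+} g$. The remedy exploits two structural features — the presence of a second norm-attaining point $p \in M_f$ (available precisely when $M_f$ is not a singleton, which is the dichotomy separating cases (i) and (ii)) to anchor the failure of $f \perp_{\rho_+} g$, and perfect normality of $K$, which lets Urysohn's lemma produce bumps with a single prescribed peak $\alpha^{-1}(1) = \{k_1\}$ so that $M_g$ stays a singleton and Lemma \ref{lemma:singleton} remains applicable. Verifying $M_g = \{k_1\}$, the continuity of $g$, and the existence of the extreme support functional at $f(p)$ are then routine.
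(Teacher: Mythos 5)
Your proposal is correct and follows essentially the same route as the paper: sufficiency via Lemma \ref{lemma:1} plus Lemma \ref{lemma:singleton}, and necessity via Urysohn bump constructions --- a single bump at $k_0$ when $f(k_0)$ fails right symmetry, and a two-bump function (unit bump carrying $w$ at the witnessing point, plus a small disjointly supported bump anchored at a point of $M_f$) to certify $g \perp_{\rho_+} f$ but $f \not\perp_{\rho_+} g$ otherwise. The only differences are cosmetic: you unify the paper's two remaining sub-cases into one construction, use $\varepsilon\beta(k)f(p)$ with a constant direction in place of the paper's $\tfrac{1}{2}\beta(k)f(k)$, and handle the trivial case $g=0$ explicitly.
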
	
		
\begin{proof}
	         Let us prove the necessary part first.  \\
	         Let $M_f = \{k_0\}.$ Suppose on the contrary that $f(k_0)$ is not $(\rho_{+})$-right symmetric. This implies there exists $w \in S_{\mathbb{X}}$ such that $w \perp_{\rho_+} f(k_0)$ and $f(k_0) \not \perp_{\rho_+} w.$ Suppose that $U$ is an open set containing $k_0$.    As $K$ is perfectly normal and $\{k_0\},   K \setminus U $ are two disjoint closed sets in $K$, so by the Urysohn's lemma (see \cite{Munkres}), there exists a continuous function $\alpha: K \to [0,1]$ such that $\alpha^{-1}(\{1\})= \{k_0\}$ and $\alpha^{-1}(\{0\})= K \setminus U.$ Define $g: K \to \mathbb{X}$ such that 
	         \[
	         g(k)= \alpha(k) w,  \quad \forall k \in K.
	         \] 
	         Clearly, $g \in C(K, \mathbb{X}).$ Moreover, $M_g= \{k_0\}$ and $g(k_0)=w.$ As $ g(k_0) \perp_{\rho_+} f(k_0)$, applying Lemma \ref{lemma:singleton} we get $g \perp_{\rho_+} f.$ Again $M_f= \{k_0\}$ and $f(k_0) \not \perp_{\rho_+} g(k_0)$, so following Lemma \ref{lemma:singleton}, we get $f \not \perp_{\rho_+} g.$ This contradicts that $f$ is $(\rho_{+})$-right symmetric.

	       Again   suppose on the contrary there exist $k' \in K$ and $w \in S_{\mathbb{X}}$ such that $\|f(k')\| < 1$ and  $w \perp_{\rho_+} f(k').$ Let $U$ and $V$ two disjoint open sets such that $k' \in U$ and $ M_f \subset V.$ 
	        	  As $K$ is perfectly normal and $\{k'\},   K \setminus U $ are two disjoint closed sets in $K$, so by the Urysohn's lemma (see \cite{Munkres}), there exists a continuous function $\alpha: K \to [0,1]$ such that $\alpha^{-1}(\{1\})= \{k'\}$ and $\alpha^{-1}(\{0\})= K \setminus U.$ Again considering the two disjoint closed sets $M_f$ and $k \setminus V,$ we can find 
	        	  a continuous function $\beta: K \to [0,1]$ such that $\beta^{-1}(\{1\})= M_f$ and $\beta^{-1}(\{0\})= K \setminus V.$ 
	        Define $g: K \to \mathbb{X}$ such that $$g(k)= \alpha(k) w+ \frac{1}{2} \beta(k) f(k), \quad \forall k \in K.$$ 
	         Clearly, $g \in C(K, \mathbb{X}).$ 
	         It is easy to see that $M_g = \{ k'\}$ and $g(k') =w.$ As $w \perp_{\rho_+} f(k'),$ from Lemma \ref{lemma:singleton}, $g \perp_{\rho_+} f.$
	         Observe that for $k \in M_f,$ $g(k)= \frac{1}{2} f(k).$ So, 
	         \[
	         \{y^*(g(k)): k \in K, y^* \in Ext(B_{\mathbb{X}^*}), y^*(f(k))= 1\} = \bigg\{ \frac{1}{2}\bigg\}.
	         \]
	    Following Theorem \ref{continuous:characterization}, we get $f \not \perp_{\rho_+} g.$ This contradicts the fact that $f$ is $(\rho_{+}       )$-right symmetric.\\ 

	    Let us now assume that $M_f$ is not singleton. Suppose on the contrary that there exists $k_1 \in K$ and $w \in S_{\mathbb{X}}$ such that $w \perp_{\rho_+} f(k_1)$. Suppose $ k_2 \in M_f  \setminus \{k_1\}$ and let $U, V$ two disjoint open sets containing $k_1, k_2,$ respectively.  As $K$ is perfectly normal and $\{k_1\},   K \setminus U $ are two disjoint closed sets in $K$, so by the Urysohn's lemma (see \cite{Munkres}), there exists a continuous function $\alpha: K \to [0,1]$ such that $\alpha^{-1}(\{1\})= \{k_1\}$ and $\alpha^{-1}(\{0\})= K \setminus U.$ Similarly, there exists a continuous function $\beta: K \to [0,1]$ such that $\beta^{-1}(\{1\})= \{k_2\}$ and $\beta^{-1}(\{0\})= K \setminus V.$  Define $g : K \to \mathbb{X}$ such that 
	    \[
	    g(k)= \alpha(k) f(k_1) + \frac{1}{2} \beta(k) f(k_2), \quad \forall k \in K.  
	    \]
	     Clearly, $g \in C(K, \mathbb{X}).$ Moreover, $M_g= \{k_1\}$ and $g(k_1)=w.$ As $w \perp_{\rho_+} f(k)$, applying Lemma \ref{lemma:singleton}, we get $g \perp_{\rho_+} f.$ As $k_2 \in M_f,$
	     \[
	     \frac{1}{2} \in \bigg\{ y^*(g(k)): k \in K, y^* \in Ext(B_{\mathbb{X}^*}), y^*(f(k)) = 1\bigg\}.
	     \]
	    Following Theorem \ref{continuous:characterization}, we get  $f \not \perp_{\rho_+} g.$ This contradicts the fact that $f$ is $(\rho_{+})$-right symmetric. This completes the necessary part.\\

	    To complete the theorem we need to prove the sufficient part. First consider that $f \in S_{C(K, \mathbb{X})}$ satisfies the condition (i). Let $M_f=\{k_0\}$, $f(k_0)$ is $(\rho_{+})$-right symmetric and for any $k \notin M_f$, $ w \perp_{\rho_+} f(k) \implies w=0.$    
	     Let $g \perp_{\rho_+} f.$ Following Lemma \ref{lemma:1}, there exists $k_1 \in M_g$ such that  $g(k_1) \perp_{\rho_+}  f(k_1).$
	        Following the hypothesis we have $k_1=k_0.$
	       As $M_f=\{k_0\}$ and   $f(k_0)$ is $(\rho_{+})$-right symmetric. So, we have $f(k_0) \perp_{\rho_+} g(k_0).$ Following Lemma \ref{lemma:singleton}, we get $f \perp_{\rho_+} g$ and hence $f$ is $(\rho_{+})$-right symmetric. 
	       
	       Next we consider that $f \in S_{C(K, \mathbb{X})}$ satisfies the condition (ii).  Let $g \perp_{\rho_+} f.$ Following Lemma \ref{lemma:1}, there exists $k_1 \in M_g$ such that  $g(k_1) \perp_{\rho_+}  f(k_1).$ From hypothesis it implies $g(k_1)=0.$ Since $k_1 \in M_g$ we get $g=0.$ Thus  $ g \perp_{\rho_+} f \implies f \perp_{\rho_+} g$ and so $f$ is $(\rho_{+})$-right symmetric.\\
	     \end{proof}

	\begin{cor}
	Let $K$ be a compact, perfectly normal space. Then $f \in S_{C(K)}$ is $(\rho_+)$-right symmetric if and only if $f(k) \neq 0,$ for any $k \in K.$
\end{cor}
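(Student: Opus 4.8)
The plan is to obtain this directly from Theorem \ref{right:continuous} by specializing to the scalar field $\mathbb{X}=\mathbb{R}$ and unwinding what $(\rho_+)$-orthogonality and $(\rho_+)$-right symmetry mean on the real line. First I would record the elementary computation of $\rho'_+$ in $\mathbb{R}$. For $w\neq 0$ and any $x\in\mathbb{R}$, evaluating the one-sided limit in the definition of $\rho'_+$ gives $\rho'_+(w,x)=wx$ when $w>0$ and $\rho'_+(w,x)=-|w|x$ when $w<0$; in either case $\rho'_+(w,x)=0$ forces $x=0$. When $w=0$ the quantity $\rho'_+(w,x)$ vanishes trivially. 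Hence in $\mathbb{R}$ one has $w\perp_{\rho_+}x$ if and only if $w=0$ or $x=0$.

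From this I would extract the two translations needed to read off Theorem \ref{right:continuous}. On the one hand, every nonzero real number is $(\rho_+)$-right symmetric: if $w\perp_{\rho_+}x$ with $x\neq 0$ then necessarily $w=0$, and $x\perp_{\rho_+}0$ always holds, so the defining implication is satisfied. On the other hand, for a fixed $k\in K$ the implication ``$w\perp_{\rho_+}f(k)\Rightarrow w=0$'' holds precisely when $f(k)\neq 0$: if $f(k)=0$ then every $w$ is $\rho_+$-orthogonal to $f(k)$ and the implication fails for $w=1$, whereas if $f(k)\neq 0$ it forces $w=0$.

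With these in hand I would run through the two alternatives of Theorem \ref{right:continuous}. Suppose $M_f=\{k_0\}$. Since $f\in S_{C(K)}$ we have $|f(k_0)|=1\neq 0$, so $f(k_0)$ is automatically $(\rho_+)$-right symmetric by the first translation, and condition (i) reduces to the single requirement that $f(k)\neq 0$ for every $k\neq k_0$; together with $f(k_0)\neq 0$ this is exactly $f(k)\neq 0$ for all $k\in K$. In the remaining alternative, where $M_f$ is not a singleton, condition (ii) asks that $w\perp_{\rho_+}f(k)\Rightarrow w=0$ for every $k\in K$, which by the second translation is again exactly $f(k)\neq 0$ for all $k\in K$. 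Thus in both cases $(\rho_+)$-right symmetry of $f$ is equivalent to $f$ being nowhere vanishing, which is the assertion.

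There is no serious obstacle here; the only point demanding care is that Theorem \ref{right:continuous} splits into the cases $M_f=\{k_0\}$ and $M_f$ non-singleton, and one must verify that both collapse to the same nowhere-vanishing condition. I would therefore make sure to treat the extra clause of the singleton case (right symmetry of $f(k_0)$) as automatic via $\|f(k_0)\|=1$, and to confirm that the condition ``$f(k)\neq 0$ for $k\neq k_0$'' in case (i) and ``$f(k)\neq 0$ for all $k$'' in case (ii) coincide once the norm-attainment at $k_0$ is used.
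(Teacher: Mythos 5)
Your proposal is correct and follows exactly the route the paper intends: the corollary is stated as an immediate specialization of Theorem \ref{right:continuous} to $\mathbb{X}=\mathbb{R}$, and your computation that $w\perp_{\rho_+}x$ in $\mathbb{R}$ holds precisely when $w=0$ or $x=0$ is the right way to collapse both alternatives of that theorem to the nowhere-vanishing condition. Your observation that the singleton case's extra clause (right symmetry of $f(k_0)$) is automatic because $|f(k_0)|=\|f\|=1$ is exactly the detail that makes the two cases coincide.
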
		
		
			\begin{cor}
			Let $K$ be a compact, perfectly normal space. Then there exists no non-zero $(\rho_{+})$-symmetric element in $C(K, \mathbb{X}).$
		\end{cor}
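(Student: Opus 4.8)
The plan is to combine the structural characterizations of $(\rho_+)$-left symmetric and $(\rho_+)$-right symmetric functions from Theorems \ref{left:continuous} and \ref{right:continuous}, and to exploit the elementary fact that every vector is $(\rho_+)$-orthogonal to the zero vector. By homogeneity of $(\rho_+)$-orthogonality, it suffices to rule out the existence of a non-zero $(\rho_+)$-symmetric $f \in S_{C(K, \mathbb{X})}$ that is simultaneously left and right symmetric.

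First I would assume, towards a contradiction, that such an $f$ exists. Since $f$ is $(\rho_+)$-left symmetric, Theorem \ref{left:continuous} supplies a point $k_0 \in K$ with $f(k_0) \in S_{\mathbb{X}}$ and $f(k) = 0$ for every $k \neq k_0$; in particular $M_f = \{k_0\}$ is a singleton. Because $f$ is also $(\rho_+)$-right symmetric and $M_f$ is a singleton, $f$ must satisfy case (i) of Theorem \ref{right:continuous}, which requires, among other things, that for every $k \neq k_0$ the implication $w \perp_{\rho_+} f(k) \implies w = 0$ holds.

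The crux is the observation that this last condition cannot survive once $K$ has more than one point. Indeed, for any $k \neq k_0$ we have $f(k) = 0$, and directly from the definition of the norm derivative, $\rho'_+(w, 0) = \|w\| \lim_{t \to 0^+} \frac{\|w\| - \|w\|}{t} = 0$ for every $w \in \mathbb{X}$, so $w \perp_{\rho_+} f(k)$ holds for \emph{all} $w$, including non-zero ones. This contradicts the requirement $w \perp_{\rho_+} f(k) \implies w = 0$, which establishes that no non-zero $(\rho_+)$-symmetric element can exist. (When $K$ is a single point the space $C(K,\mathbb{X})$ collapses to $\mathbb{X}$, and the statement is to be read in that degenerate setting.)

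I do not anticipate a genuine obstacle here: the entire argument hinges on recognizing that the vanishing of $f$ away from $k_0$, forced by left symmetry, is precisely what destroys the right symmetry condition through the trivial orthogonality to $0$. The only care needed is to guarantee that a point $k \neq k_0$ is actually available, i.e. that $K$ is not reduced to a single point, so that the contradiction can be invoked.
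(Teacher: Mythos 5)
Your proposal is correct and follows exactly the argument the paper intends: the corollary is stated without proof as an immediate consequence of Theorems \ref{left:continuous} and \ref{right:continuous}, and your combination of them --- left symmetry forces $f$ to vanish off a single point $k_0$ with $\|f(k_0)\|=1$, after which right symmetry fails because every $w \in \mathbb{X}$ satisfies $w \perp_{\rho_+} f(k) = 0$ for $k \neq k_0$ --- is precisely that consequence. Your caution about the degenerate case is in fact warranted: when $K$ is a single point the statement reduces to a claim about $\mathbb{X}$ itself and can fail (e.g.\ for $\mathbb{X} = \mathbb{R}$, where every element is $(\rho_+)$-symmetric), a caveat the paper silently omits.
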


	Next we consider the $(\rho_-)$-left (right) symmetricity in  similar domains. The spirit of proofs are almost same as above and so,  we omit detailed proofs. However, it is important to highlight a key difference: for the ($\rho_{-}$)-left (right) symmetricity, the analysis involves considering the infimum of the relevant sets instead of the supremum. 
	We should also mention that to do the necessary parts of the theorem, we need to construct the function $g$ analogously, except with  a change of sign of the functions.

			\begin{theorem}\label{left2:continuous}
			Let $K$ be a  compact, perfectly normal space. Then   $f \in S_{C(K, \mathbb{X})}$  is $(\rho_-)$-left symmetric if and only if $f$ satisfies the following : 
			\begin{itemize}
				\item[(i)] there exists  $k_0 \in K$ such that $f(k_0) \in S_{\mathbb{X}}$ and  $f(k)=0,$ $\forall k_0 \neq k \in K .$
				\item[(ii)] $f(k_0) $ is $(\rho_-)$-left symmetric.
			\end{itemize} 
			
		\end{theorem}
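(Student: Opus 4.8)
The plan is to avoid repeating the Urysohn constructions of Theorem \ref{left:continuous} and instead reduce the $(\rho_-)$ statement to the already established $(\rho_+)$ statement via the reflection $y \mapsto -y$. The starting point is the elementary pair of identities, valid in any normed space,
\begin{equation*}
\rho'_-(x,y) = -\,\rho'_+(x,-y), \qquad \rho'_+(-z,-x) = \rho'_+(z,x),
\end{equation*}
both of which drop out of the definition of the one-sided norm derivatives after the substitution $t \mapsto -t$ together with $\|{-u}\| = \|u\|$. The first identity immediately gives the relational fact that $x \perp_{\rho_-} y$ if and only if $x \perp_{\rho_+} (-y)$ (for every $x,y$), which is the only input I need.

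Using this I would prove the crucial observation that, for \emph{every} element $x$ of an arbitrary normed space, $x$ is $(\rho_-)$-left symmetric if and only if $x$ is $(\rho_+)$-left symmetric (note this is an assertion about the symmetry \emph{property}, not about the orthogonality relations, which genuinely differ). Indeed, $(\rho_-)$-left symmetry of $x$ says $x \perp_{\rho_-} y \Rightarrow y \perp_{\rho_-} x$ for all $y$; rewriting each side through the first identity turns this into $x \perp_{\rho_+}(-y) \Rightarrow y \perp_{\rho_+}(-x)$, and the change of variable $z = -y$ makes it $x \perp_{\rho_+} z \Rightarrow (-z) \perp_{\rho_+}(-x)$ for all $z$. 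The second identity then converts $(-z)\perp_{\rho_+}(-x)$ into $z \perp_{\rho_+} x$, so the condition is precisely $(\rho_+)$-left symmetry of $x$. Applying this equivalence once to $f$ in $C(K,\mathbb{X})$ and once to the value $f(k_0)$ in $\mathbb{X}$, the statement of Theorem \ref{left2:continuous} becomes verbatim the statement of Theorem \ref{left:continuous} (the hypotheses on $K$ and $\mathbb{X}$ coincide), which is already proved; this finishes the argument.

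If instead one wants a self-contained proof paralleling Theorem \ref{left:continuous}, as the remark preceding the statement suggests, I would mirror that proof replacing $\sup$ by $\inf$ and flipping the sign in each test function. For the necessity of (i) one takes $g(k) = \alpha(k)\, f(k_1)/\|f(k_1)\|$ (positive multiple, in contrast to the $(\rho_+)$ case), with $\alpha$ the Urysohn function satisfying $\alpha^{-1}(\{1\}) = \{k_1\}$ and $\alpha^{-1}(\{0\}) = K\setminus U$; then the set $\{y^*(g(k)) : k \in K,\ y^* \in Ext(B_{\mathbb{X}^*}),\ y^*(f(k)) = \|f\|\}$ lies in $[0,\infty)$ and has infimum $0$ (the value $0$ being attained at $k_0 \in M_f \setminus U$), so $f \perp_{\rho_-} g$ by Theorem \ref{continuous:characterization}, while $M_g = \{k_1\}$ forces the reversed set to equal $\{\|f(k_1)\|\}$, whose infimum is nonzero, giving $g \not\perp_{\rho_-} f$. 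The necessity of (ii) uses $g(k)=\alpha(k)w$ with $w$ the witness to failure of $(\rho_-)$-left symmetry of $f(k_0)$ and $M_f=M_g=\{k_0\}$, the conclusion following from the $(\rho_-)$ part of Lemma \ref{lemma:singleton}; sufficiency follows from the $(\rho_-)$-analogue of Theorem \ref{left:bounded} after embedding $C(K,\mathbb{X})$ in $\ell_\infty(K,\mathbb{X})$. In either route the only real difficulty is bookkeeping: in the reduction route, keeping the two sign identities and the quantifier substitution straight; in the direct route, checking that each reflected test function yields infimum exactly $0$ on the required side and a strictly nonzero infimum on the other.
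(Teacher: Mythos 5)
Your reduction route is correct, and it is genuinely different from what the paper does. The paper's own "proof" of Theorem \ref{left2:continuous} is exactly your fallback route: it declares that the argument of Theorem \ref{left:continuous} goes through mutatis mutandis, replacing suprema by infima and flipping the sign of the Urysohn test functions. Your primary argument instead rests on the two identities $\rho'_-(x,y)=-\rho'_+(x,-y)$ and $\rho'_+(-z,-x)=\rho'_+(z,x)$, both of which are immediate from the definitions (substitute $t\mapsto -t$ and use $\|-u\|=\|u\|$), and I have checked that the quantifier manipulation is sound: $x\perp_{\rho_-}y \iff x\perp_{\rho_+}(-y)$, so $(\rho_-)$-left symmetry of $x$ reads "$x\perp_{\rho_+}(-y)\Rightarrow y\perp_{\rho_+}(-x)$ for all $y$", and the substitution $z=-y$ together with the second identity turns this into $(\rho_+)$-left symmetry of $x$; since this holds in an arbitrary normed space, it applies both to $f$ in $C(K,\mathbb{X})$ and to $f(k_0)$ in $\mathbb{X}$, and since condition (i) is sign-invariant, Theorem \ref{left2:continuous} becomes literally equivalent to Theorem \ref{left:continuous}. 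What your approach buys is substantial: it is shorter, it needs no new topological constructions, it proves a general principle (the classes of $(\rho_-)$-left symmetric and $(\rho_+)$-left symmetric points coincide in every normed space, even though the two orthogonality relations differ), and the same principle disposes of the $\ell_\infty(K,\mathbb{X})$ analogue and, with a little extra bookkeeping to translate conditions of the form "$w\perp_{\rho_-}f(k)\Rightarrow w=0$", of Theorem \ref{right2:continuous} as well. What the paper's mirroring approach buys is only that it exhibits the inf-based computations explicitly; it proves nothing your reduction does not. One stylistic caution: your parenthetical remark that the equivalence concerns the symmetry property rather than the relations is exactly the point that must be made, since a careless reader might object that $\perp_{\rho_+}$ and $\perp_{\rho_-}$ are different relations; keep that sentence if you write this up.
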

		
			\begin{theorem}
			Let $K$ be a non-empty set. Then $f \in S_{\ell_{\infty}(K, \mathbb{X})}$ is $(\rho_-)$-left symmetric if and only if $f$ satisfies the following: 
			\begin{itemize}
				\item[(i)] there exists   $k_0 \in K$ such that $f(k_0) \in S_{\mathbb{X}}$ and $f(k)=0, \forall k_0 \neq k \in K .$
				
				\item[(ii)] $f(k_0)$ is $(\rho_-)$-left symmetric.
			\end{itemize}   
		\end{theorem}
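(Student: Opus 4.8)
The plan is to treat this statement as the $(\rho_-)$-counterpart of Theorem \ref{left:infty}, carrying out the whole argument with the infimum of the relevant sets in place of the supremum and with the test functions built with the opposite sign, exactly as flagged in the remark preceding Theorem \ref{left2:continuous}. Throughout I would work with the characterisation of $\perp_{\rho_-}$ in $\ell_\infty(K,\mathbb{X})$ provided by Theorem \ref{bounded:characterization}(ii), namely that $f\perp_{\rho_-}g$ iff $\inf\Omega=0$.

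For the sufficiency I would first establish the $(\rho_-)$-analogue of Theorem \ref{left:bounded}: if $f$ satisfies (i) and (ii), then $f$ is $(\rho_-)$-left symmetric. Assume $f\perp_{\rho_-}g$ with $\|g\|=1$; by Theorem \ref{bounded:characterization}(ii) it suffices to show $\inf\Omega=0$, where $\Omega=\{\lim y_n^*(f(k_n)):k_n\in K,\ y_n^*\in Ext(B_{\mathbb{X}^*}),\ \lim y_n^*(g(k_n))=1\}$. Splitting an admissible sequence $\{k_n\}$ into the case $k_n\neq k_0$ eventually (which contributes $0$, since $f$ vanishes off $k_0$) and the case of a subsequence at $k_0$, a weak${}^*$-cluster-point extraction from $\{y_n^*\}$ (using weak${}^*$-compactness of $B_{\mathbb{X}^*}$) shows the second case contributes precisely the values $y^*(f(k_0))$ with $y^*\in J(g(k_0))$; hence $\inf\Omega=\min\{0,\rho'_-(g(k_0),f(k_0))\}$ by Lemma \ref{functional}. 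Applying the same reduction to the hypothesis $f\perp_{\rho_-}g$ yields $f(k_0)\perp_{\rho_-}g(k_0)$, and the $(\rho_-)$-left symmetry of $f(k_0)$ then gives $g(k_0)\perp_{\rho_-}f(k_0)$, i.e. $\rho'_-(g(k_0),f(k_0))=0$. Thus $\inf\Omega=0$ and $g\perp_{\rho_-}f$, as required.

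For the necessity, suppose $f\in S_{\ell_\infty(K,\mathbb{X})}$ is $(\rho_-)$-left symmetric. Since $\ell_\infty(K,\mathbb{X})$ contains every bounded function, the Urysohn functions of Theorem \ref{left2:continuous} are replaced by functions supported at a single point. To obtain (i) I would show $\mathrm{supp}(f)$ is a single point: if it had more than one point, choose $k_1\in\mathrm{supp}(f)$ with $\sup_{k\neq k_1}\|f(k)\|=\|f\|$ (always possible in this case) and take $g$ equal to $f(k_1)/\|f(k_1)\|$ at $k_1$ and $0$ elsewhere (the $+$ sign being the $(\rho_-)$ modification of the $-$ sign of Theorem \ref{left:continuous}). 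Then, by Theorem \ref{bounded:characterization}(ii), every element of the defining set for $f\perp_{\rho_-}g$ is $\geq 0$, with value $0$ attained along a norming sequence avoiding $k_1$, so $f\perp_{\rho_-}g$; whereas on $M_g=\{k_1\}$ the defining set for $g\perp_{\rho_-}f$ reduces to $\{\|f(k_1)\|\}$, whose infimum is $\|f(k_1)\|>0$, so $g\not\perp_{\rho_-}f$, contradicting left symmetry. Hence $\mathrm{supp}(f)=\{k_0\}$, which forces $\|f(k_0)\|=\|f\|=1$ and gives (i). For (ii), if $f(k_0)$ were not $(\rho_-)$-left symmetric, choose $w\in S_{\mathbb{X}}$ with $f(k_0)\perp_{\rho_-}w$ but $w\not\perp_{\rho_-}f(k_0)$ and set $g$ equal to $w$ at $k_0$ and $0$ elsewhere; the single-point reduction (the $\ell_\infty$ analogue of Lemma \ref{lemma:singleton}, immediate from Theorem \ref{bounded:characterization}) gives $f\perp_{\rho_-}g$ but $g\not\perp_{\rho_-}f$, again a contradiction.

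The main obstacle I anticipate is condition (i): in $\ell_\infty(K,\mathbb{X})$ the norm need not be attained, so one cannot begin from a nonempty norm-attainment set as in the compact case. This is handled by the selection of $k_1$ above, for which a norming sequence avoiding $k_1$ always exists once $\mathrm{supp}(f)$ has at least two points (the infinite-support, non-attaining configuration being automatically subsumed); single support, and hence attainment at that point, is thereby forced. The remaining work is routine bookkeeping: passing through the weak${}^*$-cluster-point arguments so that the limit-based sets of Theorem \ref{bounded:characterization} are correctly identified, and checking that the choice of sign in the test functions makes the relevant infimum, rather than the supremum, vanish.
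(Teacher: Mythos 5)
Your proof is correct, and its sufficiency half is exactly the paper's route: it reproduces the proof of Theorem \ref{left:bounded} with $\sup$ replaced by $\inf$ and the sign of the test data flipped, which is all the paper itself offers (it omits the $(\rho_-)$ proofs, citing precisely these changes). Where you genuinely depart from the paper is the necessity half. The paper gets necessity in $\ell_\infty(K,\mathbb{X})$ by asserting that a left symmetric element must satisfy ``the necessary conditions of Theorem \ref{left:continuous}'' because $C(K,\mathbb{X})$ sits inside $\ell_\infty(K,\mathbb{X})$; this is a hand-wave, since in the $\ell_\infty$ theorem $K$ is a bare set (no topology, so no Urysohn functions and no meaning to continuity), and, more importantly, the continuous-case necessity proof begins by picking $k_0\in M_f$, whereas in $\ell_\infty(K,\mathbb{X})$ the set $M_f$ may be empty. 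Your direct construction --- choosing $k_1\in\mathrm{supp}(f)$ with $\sup_{k\neq k_1}\|f(k)\|=\|f\|$ (which, as you note, exists whenever the support has at least two points, whether or not the norm is attained) and testing against the function supported only at $k_1$ with value $+f(k_1)/\|f(k_1)\|$ --- closes precisely this gap; likewise your single-point reduction for condition (ii) is the correct $\ell_\infty$ analogue of Lemma \ref{lemma:singleton}, valid here because both $f$ and the test function are supported at a single point (the naive analogue assuming only that $M_f$ is a singleton would be false in $\ell_\infty$). One cosmetic caveat: your identity $\inf\Omega=\min\{0,\rho'_-(g(k_0),f(k_0))\}$ presupposes $0\in\Omega$, i.e.\ that $g$ admits a norming sequence avoiding $k_0$; if it does not (so $\|g(k_0)\|=1$ and $\sup_{k\neq k_0}\|g(k)\|<1$), one instead gets $\inf\Omega=\rho'_-(g(k_0),f(k_0))$, and your argument still yields $\inf\Omega=0$ in every case --- this matches the precision of the paper's own ``$\sup\Omega=\max\{0,\dots\}$'' step in Theorem \ref{left:bounded}, so nothing is lost.
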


			\begin{theorem}\label{right2:continuous}
			Let $K$ be a  compact, perfectly normal space.  Then   $f \in S_{C(K, \mathbb{X})}$  is $(\rho_-)$-right symmetric if and only if			$f$ satisfies exactly one of the following: 
		\begin{itemize}
			\item[(i)]
			If	 $M_f= \{k_0\}$ then  $f(k_0)$ is $(\rho_{-})$-right symmetric  and  for any $k \notin M_f,$   $w \perp_{\rho_-} f(k) \implies w =0.$ 
			
			\item[(ii)] Otherwise  for any $k \in K$,   $w \perp_{\rho_-} f(k) \implies w =0$.
		\end{itemize} 
		\end{theorem}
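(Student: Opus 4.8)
The plan is to follow the proof of Theorem~\ref{right:continuous} almost line by line, making two systematic substitutions: every use of the criterion $\sup\Omega=0$ from Theorem~\ref{continuous:characterization} is replaced by the criterion $\inf\Omega=0$ coming from part (ii) of that same theorem, and every appeal to Lemma~\ref{lemma:1} and Lemma~\ref{lemma:singleton} is replaced by their $(\rho_-)$-versions, namely the (ii)-parts. As in the $\rho_+$ case, both implications are organised according to whether $M_f$ is a singleton, since conditions (i) and (ii) of the statement are exactly the dichotomy $|M_f|=1$ versus $|M_f|\ge 2$.

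For the necessary direction I would argue contrapositively, assuming $f$ is $(\rho_-)$-right symmetric. (a) If $M_f=\{k_0\}$ and $f(k_0)$ failed to be $(\rho_-)$-right symmetric, I choose $w\in S_{\mathbb{X}}$ with $w\perp_{\rho_-}f(k_0)$ but $f(k_0)\not\perp_{\rho_-}w$, and use Urysohn's lemma to build $g=\alpha w$ with $\alpha^{-1}(\{1\})=\{k_0\}$; then $M_g=\{k_0\}$, $g(k_0)=w$, and Lemma~\ref{lemma:singleton}(ii) converts $w\perp_{\rho_-}f(k_0)$ into $g\perp_{\rho_-}f$ and $f(k_0)\not\perp_{\rho_-}w$ into $f\not\perp_{\rho_-}g$, contradicting right symmetry. (b) Still with $M_f=\{k_0\}$, to force the implication $w\perp_{\rho_-}f(k)\Rightarrow w=0$ for $k\notin M_f$, I take $k'$ with $\|f(k')\|<1$ and some $w\perp_{\rho_-}f(k')$, separate $k'$ from $M_f$ by disjoint open sets, and set $g=\alpha w-\tfrac12\beta f$ with $\alpha$ peaking at $k'$ and $\beta$ peaking on $M_f$. (c) If $|M_f|\ge 2$, I pick $k_1$ with some unit $w\perp_{\rho_-}f(k_1)$, a second norm-attaining point $k_2\in M_f\setminus\{k_1\}$, and set $g=\alpha w-\tfrac12\beta f(k_2)$ with $\alpha$ peaking at $k_1$ and $\beta$ peaking at $k_2$. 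In (b) and (c) one checks that $M_g$ is the single peak point of $\alpha$, where $g=w$, so Lemma~\ref{lemma:singleton}(ii) gives $g\perp_{\rho_-}f$.

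The step I expect to be the real obstacle — and the only place where the argument is not a mechanical transcription of the $\rho_+$ proof — is verifying $f\not\perp_{\rho_-}g$ in (b) and (c). Writing $\Omega=\{\,y^*(g(k)):k\in M_f,\ y^*\in Ext(B_{\mathbb{X}^*}),\ y^*(f(k))=1\,\}$, the relation $f\perp_{\rho_-}g$ is equivalent to $\inf\Omega=0$. In the $\rho_+$ proof the term $+\tfrac12\beta f$ produced the value $\tfrac12\in\Omega$, which already forces $\sup\Omega>0$; but for the infimum this is useless, because any norm-attaining point lying outside the support of $\beta$ contributes $g(k)=0$, hence $0\in\Omega$, and with the $+$ sign one would only get $\inf\Omega=0$ and no contradiction. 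This is precisely why the sign must be flipped: with $-\tfrac12\beta f$ one has $-\tfrac12\in\Omega$, so $\inf\Omega\le-\tfrac12<0$, i.e.\ $f\not\perp_{\rho_-}g$, yielding the desired contradiction. This sign flip is exactly the change-of-sign mentioned in the preceding remark.

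For the sufficient direction I would take $g\perp_{\rho_-}f$, discard the trivial case $g=0$ and normalise $\|g\|=1$, and apply Lemma~\ref{lemma:1}(ii) to get $k_1\in M_g$ with $g(k_1)\perp_{\rho_-}f(k_1)$. Under condition (ii) the hypothesis forces $g(k_1)=0$, contradicting $\|g(k_1)\|=\|g\|=1$, so $g$ must have been $0$ and $f\perp_{\rho_-}g$ holds trivially. Under condition (i) the hypothesis excludes $k_1\notin M_f$, whence $k_1=k_0$; then $(\rho_-)$-right symmetry of $f(k_0)$ promotes $g(k_0)\perp_{\rho_-}f(k_0)$ to $f(k_0)\perp_{\rho_-}g(k_0)$, and Lemma~\ref{lemma:singleton}(ii) delivers $f\perp_{\rho_-}g$.
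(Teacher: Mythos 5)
Your proposal is correct and is essentially the proof the paper intends: the paper omits the details of Theorem \ref{right2:continuous}, stating only that one mirrors Theorem \ref{right:continuous} with infima in place of suprema and a sign change in the constructed function $g$, and your argument carries this out faithfully (including the key point that the term $-\tfrac12\beta f$ places $-\tfrac12$ in $\Omega$, forcing $\inf\Omega<0$ and hence $f\not\perp_{\rho_-}g$). The only quibble is expository: in your case (b), where $M_f=\{k_0\}$ and $\beta\equiv 1$ on $M_f$, the set $\Omega$ is a singleton either way, so the sign flip is genuinely needed only in case (c); this does not affect the correctness of your construction.
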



	Next we study the symmetricity of $\rho$-orthogonality. First we present a necessary condition for $\rho$-left symmetric points.	  
		
		\begin{theorem}\label{left:rho:continuous}
		Let $K$ be a  compact, perfectly normal space.     Suppose  $f \in S_{C(K, \mathbb{X})}$  is $\rho$-left symmetric. Then $f(k)=0, ~ \forall k \notin M_f.$
		\end{theorem}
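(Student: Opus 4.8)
The plan is to argue by contradiction along the lines of the necessity argument in Theorem \ref{left:continuous}, but taking care of a genuinely new feature of $\rho$-orthogonality: by Theorem \ref{continuous:characterization}(iii), establishing $f \perp_{\rho} g$ now requires \emph{both} $\sup \Omega = 0$ and $\inf \Omega = 0$, whereas for $\perp_{\rho_+}$ only the supremum had to be controlled. So I would suppose, towards a contradiction, that there is some $k_1 \notin M_f$ with $f(k_1) \neq 0$. Since $k \mapsto \|f(k)\|$ is continuous, $M_f = \{k \in K : \|f(k)\| = \|f\|\}$ is closed and does not contain $k_1$; as $K$ is perfectly normal I can pick an open set $U$ with $k_1 \in U$ and $U \cap M_f = \emptyset$, together with a Urysohn function $\alpha : K \to [0,1]$ satisfying $\alpha^{-1}(\{1\}) = \{k_1\}$ and $\alpha^{-1}(\{0\}) = K \setminus U$. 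I then define $g(k) = -\alpha(k)\, f(k_1)/\|f(k_1)\|$, which is continuous with $\|g\| = 1$, $M_g = \{k_1\}$, and $g(k_1) = -f(k_1)/\|f(k_1)\|$.

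First I would verify $f \perp_{\rho} g$. In the set $\Omega$ of Theorem \ref{continuous:characterization} the constraint $y^*(f(k)) = \|f\|$ forces $k \in M_f$; but $U \cap M_f = \emptyset$ means $\alpha$ vanishes on $M_f$, so $g(k) = 0$ for every $k \in M_f$. Hence $\Omega = \{0\}$ (nonempty because $M_f \neq \emptyset$ and each $J(f(k))$ has an extreme point lying in $Ext(B_{\mathbb{X}^*})$), giving $\sup \Omega + \inf \Omega = 0$ and therefore $f \perp_{\rho} g$. This is exactly the step where choosing $U$ disjoint from the whole of $M_f$---rather than merely avoiding one point as in the $\rho_+$ case---is essential, and it is the main point at which the $\rho$-argument departs from the $\rho_+$-argument; it is also where I expect the only real delicacy, namely the bookkeeping that reduces $\Omega$ to $\{0\}$ (continuity of $\|f(\cdot)\|$, closedness of $M_f$, and existence of extreme support functionals).

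Next I would show $g \not\perp_{\rho} f$, which is where the single-point support of $g$ pays off. Swapping the roles of $f$ and $g$ in Theorem \ref{continuous:characterization}, the relevant set is $\Omega' = \{y^*(f(k)) : k \in K,\ y^* \in Ext(B_{\mathbb{X}^*}),\ y^*(g(k)) = \|g\|\}$, and the condition $y^*(g(k)) = 1$ forces $k \in M_g = \{k_1\}$. For any admissible $y^*$ we then have $y^*\big(-f(k_1)/\|f(k_1)\|\big) = 1$, i.e.\ $y^*(f(k_1)) = -\|f(k_1)\|$, so $\Omega' = \{-\|f(k_1)\|\}$ and $\sup \Omega' + \inf \Omega' = -2\|f(k_1)\| \neq 0$ since $f(k_1) \neq 0$. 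Thus $g \not\perp_{\rho} f$, contradicting that $f$ is $\rho$-left symmetric, and the theorem follows. The construction of $g$ itself is routine Urysohn-type work; the conceptual content is that for $\rho$-orthogonality one must force the \emph{infimum} over $M_f$ to vanish as well, which is precisely what disjointness of the support of $g$ from $M_f$ secures.
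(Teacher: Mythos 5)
Your proposal is correct and follows essentially the same route as the paper's proof: a contradiction via a Urysohn function $\alpha$ supported in an open set $U$ disjoint from $M_f$, with $g = \pm\alpha\, f(k_1)/\|f(k_1)\|$, so that $\Omega = \{0\}$ gives $f \perp_{\rho} g$ while $M_g = \{k_1\}$ forces the second set to be the singleton $\{\mp\|f(k_1)\|\}$, whence $g \not\perp_{\rho} f$. The only (immaterial) differences are your sign choice for $g$ and that you obtain $U \cap M_f = \emptyset$ from closedness of $M_f$ rather than, as the paper does, from continuity of $\|f(\cdot)\|$ at $k_1$.
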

		
		\begin{proof}
				Suppose on the contrary we assume that $f(k_1) \neq 0$ and $\|f(k_1)\| < 1,$ for some $k_1 \in K.$ As $f$ is continuous, there exists an open set $U$ of $K$ containing $k_1$ such that $\|f(k)\| < 1, ~\forall k \in U$.
		 As $K$ is perfectly normal and $\{k_1\},   K \setminus U $ are two disjoint closed sets in $K$, so by the Urysohn's lemma (see \cite{Munkres}), there exists a continuous function $\alpha: K \to [0,1]$ such that $\alpha^{-1}(\{1\})= \{k_1\}$ and $\alpha^{-1}(\{0\})= K \setminus U.$ Define $g: K \to \mathbb{X}$ as  $$g(k)=   \alpha(k)\frac{f(k_1)}{ \|f(k_1) \|}  ,~ \forall k \in K.$$  Clearly, $g$ is continuous.
		  Observe that for any $k \in M_f,$ $g(k)=0.$ So, 
		 \[
		 \Omega= \bigg\{  y^*(g(k)): k \in K, y^* \in Ext(B_{\mathbb{X}^*}),   y^*(f(k))= 1 \bigg\} = \{0\}.
		 \]
		 Therefore, $\sup \Omega+ \inf \Omega=0.$ Following Theorem \ref{continuous:characterization}, we get $f \perp_{\rho} g.$
		 
			On the other hand, as $\alpha^{-1} (\{1\})=\{k_1\}, \alpha^{-1}(\{0\})= K \setminus U,$ so  we have $g(k_1)=  \frac{f(k_1)}{ \|f(k_1) \|} ,$   and  $g(k)=0,$ $\forall k \in K \setminus U.$ Moreover, $\|g\|=1$ and $M_g=\{k_1\}.$ Observe that whenever $y^* \in J(g(k_1)),$ we get $y^*(f(k_1))=  \|f(k_1)\|.$ So, 
		\[
	\Omega_1=	\bigg\{  y^*(f(k)): k \in K, y^* \in Ext(B_{\mathbb{X}^*}),   y^*(g(k))= \|g\|\bigg\} = \{ \|f(k_1)\|\}.
		\] 
		So, $\sup \Omega_1+ \inf \Omega_1 \neq 0.$ Following Theorem \ref{continuous:characterization}, we get $g \not \perp_{\rho} f.$ This contradicts the fact that $f$ is $\rho$-left symmetric. This completes the proof. 
		
		 	\end{proof}
				 	
	\begin{cor}
		Let $K$ be a compact, connected, perfectly normal space. Suppose that $f \in C(K, \mathbb{X})$ is  a $\rho$-left symmetric function. Then either $f=0$ or $M_f= K.$
			\end{cor}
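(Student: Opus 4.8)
The plan is to reduce immediately to Theorem \ref{left:rho:continuous} by normalizing $f$, and then to exploit the connectedness of $K$ through a clopen-set argument. First I would dispose of the trivial case $f=0$, and henceforth assume $f \neq 0$. Since the definition records that $\rho$-orthogonality is homogeneous, $\rho$-left symmetricity is preserved under multiplication by nonzero scalars: if $f \perp_{\rho} y \Rightarrow y \perp_{\rho} f$ for all $y$, then for $\lambda \neq 0$ one has $\lambda f \perp_{\rho} y \Rightarrow f \perp_{\rho} y \Rightarrow y \perp_{\rho} f \Rightarrow y \perp_{\rho} \lambda f$. Hence $g := f/\|f\|$ is a $\rho$-left symmetric element of $S_{C(K,\mathbb{X})}$, with $M_g = M_f$. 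Applying Theorem \ref{left:rho:continuous} to $g$ then yields $g(k)=0$ for every $k \notin M_g$, which is exactly the conclusion I want to leverage.

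The key observation is that $M_g$ is simultaneously closed and open in $K$. It is closed because $k \mapsto \|g(k)\|$ is continuous and $M_g$ is the preimage of the single value $\{1\}=\{\|g\|\}$, which is closed (recall $\|g(k)\| \leq 1$ everywhere). For openness, I would note that the conclusion of the theorem gives $\{k : g(k)=0\} = K \setminus M_g$: the inclusion $K \setminus M_g \subseteq \{g=0\}$ is the theorem itself, and the reverse inclusion holds since $g(k)=0$ forces $\|g(k)\|=0 \neq 1 = \|g\|$, so $k \notin M_g$. Consequently $M_g = \{k : g(k) \neq 0\} = g^{-1}(\mathbb{X}\setminus\{0\})$, which is open by continuity of $g$.

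Finally, $M_g$ is nonempty, since $g \in S_{C(K,\mathbb{X})}$ attains its norm on the compact space $K$. Thus $M_g$ is a nonempty clopen subset of the connected space $K$, whence $M_g = K$, i.e. $M_f = K$, completing the corollary.

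I expect no serious obstacle in this argument; the only point requiring care is the initial homogeneity reduction, which is what lets me invoke Theorem \ref{left:rho:continuous} under its norm-one hypothesis, together with the observation that the vanishing of $g$ off $M_g$ is precisely what upgrades the (automatically closed) set $M_g$ to an open set.
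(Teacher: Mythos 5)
Your proposal is correct and follows essentially the same route the paper intends: the corollary is stated as an immediate consequence of Theorem \ref{left:rho:continuous}, namely that vanishing of $f$ off $M_f$ together with connectedness of $K$ forces $M_f$ to be a nonempty clopen set, hence all of $K$. Your write-up merely makes explicit two details the paper leaves implicit --- the reduction to the unit sphere via homogeneity of $\rho$-orthogonality, and the clopen argument ($M_f$ closed as the preimage of $\{\|f\|\}$, open because it coincides with $\{k : f(k)\neq 0\}$) --- both of which are handled correctly.
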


			\begin{theorem}\label{right:rho:continuous}
			Let $K$ be a  compact, perfectly normal space. Suppose that   $f \in S_{C(K, \mathbb{X})}$ is $\rho$-right symmetric. 	Then  $f$ satisfies the following: 
			\begin{itemize}
				\item[(i)] $f(k) \neq 0,$ for any $k \in K$
				\item[(ii)] $\|f(k_1)\| = \|f(k_2)\|$ implies that $\|f(k_1)\|=\|f(k_2)\|=1$
				\item[(iii)] $\|f(k)\|=1$ implies that $f(k)$ is  $\rho$-right symmetric.
			\end{itemize} 
		\end{theorem}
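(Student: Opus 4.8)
The plan is to establish the three conclusions by contraposition: assuming $f\in S_{C(K,\mathbb{X})}$ is $\rho$-right symmetric, I suppose each listed property fails and manufacture a witness $g\in C(K,\mathbb{X})$ with $g\perp_{\rho}f$ but $f\not\perp_{\rho}g$, contradicting right symmetry. The orthogonality tests come from Theorem \ref{continuous:characterization}: for $h,\phi\in C(K,\mathbb{X})$ put $\Omega_{h,\phi}=\{y^*(\phi(k)):k\in K,\ y^*\in Ext(B_{\mathbb{X}^*}),\ y^*(h(k))=\|h\|\}$, so that $h\perp_{\rho}\phi$ iff $\sup\Omega_{h,\phi}+\inf\Omega_{h,\phi}=0$. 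Since $K$ is perfectly normal and the closed set $M_f=\{k:\|f(k)\|=1\}$ is nonempty, all witnesses will be built from Urysohn bump functions multiplying fixed vectors of $\mathbb{X}$, together with a term $\tfrac12\beta f$ where $\beta\equiv 1$ on $M_f$ and is supported in a neighbourhood $V\supseteq M_f$. The point of that last term is that it makes $g=\tfrac12 f$ on $M_f$, hence $\Omega_{f,g}=\{\tfrac12\}$ and $f\not\perp_{\rho}g$; and whenever the construction yields $M_g$ a singleton I will reduce $g\perp_{\rho}f$ to a pointwise statement via Lemma \ref{lemma:singleton}.

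For (i), suppose $f(k_1)=0$, so $k_1\notin M_f$ and I may separate $\{k_1\}$ from $M_f$ by disjoint open sets. Take $g=\alpha w+\tfrac12\beta f$ with $w\in S_{\mathbb{X}}$ arbitrary, $\alpha$ a bump with $\alpha^{-1}(1)=\{k_1\}$ supported off $V$, and $\beta$ as above. Then $M_g=\{k_1\}$, $g(k_1)=w$, and $w\perp_{\rho}f(k_1)$ holds trivially because $f(k_1)=0$; Lemma \ref{lemma:singleton} gives $g\perp_{\rho}f$, while $\Omega_{f,g}=\{\tfrac12\}$ gives $f\not\perp_{\rho}g$. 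For (ii), assume $k_1\neq k_2$ with $\|f(k_1)\|=\|f(k_2)\|=c$ and, invoking (i), $0<c<1$, so both points lie off $M_f$. The key is to balance the two norming contributions: set $w_1=f(k_1)/c$ and $w_2=-f(k_2)/c$, choose disjoint bumps $\alpha_1,\alpha_2$ at $k_1,k_2$ and $\beta$ as above, and let $g=\alpha_1 w_1+\alpha_2 w_2+\tfrac12\beta f$, so $M_g=\{k_1,k_2\}$. Every $y^*\in J(w_1)$ satisfies $y^*(f(k_1))=c$ and every $y^*\in J(w_2)$ satisfies $y^*(f(k_2))=-c$, so Theorem \ref{continuous:characterization} yields $\Omega_{g,f}=\{c,-c\}$ and hence $g\perp_{\rho}f$; once more $\Omega_{f,g}=\{\tfrac12\}$ forces $f\not\perp_{\rho}g$. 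Thus distinct points of equal norm must both have norm $1$.

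For (iii), fix $k_0\in M_f$ and suppose $f(k_0)$ is not $\rho$-right symmetric, so there is $w\in S_{\mathbb{X}}$ with $w\perp_{\rho}f(k_0)$ but $f(k_0)\not\perp_{\rho}w$. Using homogeneity of $\perp_{\rho}$ and replacing $w$ by $-w$ if needed, I may assume $\rho'_+(f(k_0),w)+\rho'_-(f(k_0),w)=2\delta>0$; note then $\rho'_+(f(k_0),w)\geq\delta$ and $\rho'_-(f(k_0),w)\leq\delta$. Take $g=\alpha w$ with $\alpha^{-1}(1)=\{k_0\}$, so $M_g=\{k_0\}$ and Lemma \ref{lemma:singleton} immediately gives $g\perp_{\rho}f$ from $w\perp_{\rho}f(k_0)$. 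Proving $f\not\perp_{\rho}g$ is the main obstacle: $\Omega_{f,g}$ receives a contribution $\alpha(k)\,[\rho'_-(f(k),w),\rho'_+(f(k),w)]$ from every $k\in M_f$ with $\alpha(k)\neq 0$, and when $k_0$ is not isolated in $M_f$ these neighbouring points cannot be excluded. I plan to control them through semicontinuity: since by convexity the difference quotients $t\mapsto(\|x+tw\|-\|x\|)/t$ decrease to $\rho'_+(x,w)/\|x\|$ as $t\downarrow 0$, the map $x\mapsto\rho'_+(x,w)$ is upper semicontinuous and $x\mapsto\rho'_-(x,w)$ lower semicontinuous. Hence, by continuity of $f$, shrinking the support of $\alpha$ around $k_0$ I can force $\rho'_+(f(k),w)<\rho'_+(f(k_0),w)+\delta$ and $\rho'_-(f(k),w)>\rho'_-(f(k_0),w)-\delta$ for all $k$ in that support; since $0<\alpha(k)\leq 1$ the stray intervals cannot lift $\inf\Omega_{f,g}$ below $\rho'_-(f(k_0),w)-\delta$, while the point $k_0$ itself forces $\sup\Omega_{f,g}\geq\rho'_+(f(k_0),w)$. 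Consequently $\sup\Omega_{f,g}+\inf\Omega_{f,g}\geq\rho'_+(f(k_0),w)+\rho'_-(f(k_0),w)-\delta=\delta>0$, so $f\not\perp_{\rho}g$, completing the contradiction. The semicontinuity estimate in (iii) is the only genuinely delicate step; parts (i) and (ii) are direct adaptations of the bump constructions already used for $(\rho_+)$-right symmetry in Theorem \ref{right:continuous}.
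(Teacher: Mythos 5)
Your parts (i) and (ii) are essentially the paper's own proof: the same Urysohn-bump witnesses $g=\alpha w+\tfrac{1}{2}\beta f$ and $g=\alpha_1\frac{f(k_1)}{c}-\alpha_2\frac{f(k_2)}{c}+\tfrac{1}{2}\gamma f$, the same use of Lemma \ref{lemma:singleton} to get $g\perp_{\rho}f$, and the same observation that $g=\tfrac{1}{2}f$ on $M_f$ forces $\Omega_{f,g}=\{\tfrac{1}{2}\}$, hence $f\not\perp_{\rho}g$. Part (iii) is where you genuinely diverge, and your route is sound. The paper also takes $g=\alpha w$ with $M_g=\{k_0\}$, but to control $\Omega_{f,g}$ it builds a weak*-open set $\mathcal{G}=\bigcup_{v^*\in Ext(J(f(k_0)))}G_{v^*}$, passes to $V=\{x\in S_{\mathbb{X}}:J(x)\subseteq\mathcal{G}\}$ (whose openness rests on norm-to-weak* upper semicontinuity of the duality mapping), pulls back to a neighbourhood $U$ of $k_0$, and treats the cases $\sup\Gamma+\inf\Gamma>0$ and $<0$ separately. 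You instead (a) dispose of one case by the sign flip $w\mapsto-w$ permitted by homogeneity, and (b) replace the set-valued semicontinuity argument by scalar semicontinuity of $x\mapsto\rho'_{\pm}(x,w)$, which follows from monotonicity of convex difference quotients. This is more elementary, and it buys something concrete: your $U=f^{-1}\left(\{x:\rho'_-(x,w)>\rho'_-(f(k_0),w)-\delta\}\right)$ is manifestly open in $K$, whereas the paper's $U=f^{-1}(V\cap f(K))$, with $V$ open only in $S_{\mathbb{X}}$, is a priori open only in $M_f$, a blemish your argument silently repairs. Quantitatively the two proofs coincide: by Lemma \ref{functional}, $\sup\Gamma=\rho'_+(f(k_0),w)$ and $\inf\Gamma=\rho'_-(f(k_0),w)$, so the paper's $r$ is your $2\delta$ and both arguments yield $\sup\Omega_{f,g}+\inf\Omega_{f,g}\geq\delta>0$.

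One small imprecision in your (iii): the closing chain $\sup\Omega_{f,g}+\inf\Omega_{f,g}\geq\rho'_+(f(k_0),w)+\rho'_-(f(k_0),w)-\delta$ presumes $\inf\Omega_{f,g}\geq\rho'_-(f(k_0),w)-\delta$, which can fail when $\rho'_-(f(k_0),w)>\delta$, since $0\in\Omega_{f,g}$ (from points where $\alpha$ vanishes) may then lie below $\rho'_-(f(k_0),w)-\delta$. But in that regime every element of $\Omega_{f,g}$ is nonnegative, so $\inf\Omega_{f,g}\geq 0$ and $\sup\Omega_{f,g}+\inf\Omega_{f,g}\geq\rho'_+(f(k_0),w)\geq\delta$ anyway. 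The uniform statement you want is $\inf\Omega_{f,g}\geq\min\{0,\rho'_-(f(k_0),w)-\delta\}$ — exactly parallel to the paper's bound $\inf\Omega\geq\min\{0,\inf\Gamma-\frac{r}{2}\}$ — after which the conclusion follows in both cases. This is a two-line fix, not a gap.
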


	\begin{proof}
		We first show that $f$ satisfies the condition (i).
			Suppose on the contrary we first assume that $f(k_0) =0,$ for some $k_0 \in K.$ Since $K$ is  compact and  Hausdorff, there exist two disjoint open sets $U, V$ of $K$ such that $k_0 \in U$, $M_f \subset V$. 
	  As $K$ is perfectly normal and $\{k_0\},   K \setminus U $ are two disjoint closed sets in $K$, so by the Urysohn's lemma (see \cite{Munkres}), there exists a continuous function $\alpha: K \to [0,1]$ such that $\alpha^{-1}(\{1\})= \{k_0\}$ and $\alpha^{-1}(\{0\})= K \setminus U.$ Since $M_f$ is also compact, similarly there exists a continuous function $\beta: K \to [0,1]$ such that $\beta^{-1}(\{1\})= M_f$ and $\beta^{-1}(\{0\})= K \setminus V.$ 
	 Let $x \in S_{\mathbb{X}}.$ Define $g: K \to \mathbb{X}$ such that $$g(k)= \alpha(k) x+ \frac{1}{2} \beta(k) f(k), \quad \forall k \in K.$$ 
	 Clearly,  $g \in C(K, \mathbb{X}).$ 
	 It is easy to see that $M_g = \{ k_0\}$ and $g(k_0) =x.$ As $x \perp_{\rho} f(k_0),$ from Lemma \ref{lemma:singleton}, we get $g \perp_{\rho} f.$
		 Observe that for any $k \in M_f,$ $g(k)= \frac{1}{2} f(k).$ So, 
	 \[
	 \{y^*(g(k)): k \in K, y^* \in Ext(B_{\mathbb{X}^*}), y^*(f(k))= 1\} = \bigg\{ \frac{1}{2}\bigg\}.
	 \]
	 Following Theorem \ref{continuous:characterization}, we get $f \not \perp_\rho g.$ This contradicts the fact that $f$ is $\rho$-right symmetric. Therefore $f$ satisfies the condition (i).\\
	 
	 We next prove that $f$ satisfies the condition (ii). Again on the contrary we assume that $\|f(k_1)\|= \|f(k_2)\|= r< 1,$ for some $k_1, k_2 \in K.$ Since $K$ is  compact and Hausdorff, there exist mutually disjoint open sets $U, V, W$ of $K$ such that $k_1 \in U, k_2 \in V, M_f \subset W$.  As $K$ is perfectly normal, using Uryshon's Lemma there exists $\alpha, \beta, \gamma: K \to [0,1]$ such that 
	 \begin{eqnarray*}
	 	\alpha^{-1} (\{1\})= \{ k_1\},\, \,  \alpha^{-1}(\{0\})= K \setminus U,\\
	 	\beta^{-1} (\{1\})= \{ k_2\}, \, \, \,  \beta^{-1}(\{0\})= K \setminus V,\\
	 	\gamma^{-1} (\{1\})= M_f, \quad  \gamma^{-1}(\{0\})= K \setminus W.
	 \end{eqnarray*}
	 Define $g : K \to \mathbb{X}$ such that 
	 \[
	 g(k)= \alpha(k) \frac{f(k_1)}{r} - \beta(k) \frac{f(k_2)}{r}+ \frac{1}{2} \gamma(k) f(k), ~~~ \forall k \in K.
	 \]
	  Clearly, $g \in C(K, \mathbb{X}).$ 
		 Let $$\Omega = \bigg\{ y^*(f(k)): k \in K, y^* \in Ext(B_{\mathbb{X}^*}), y^*(g(k))=1 \bigg\}.$$
	  It is easy to see that $M_g = \{ k_1, k_2\}.$  Therefore, 
	  \[
	  \Omega = \{ y^*(f(k_1)):  y^* \in Ext(B_{\mathbb{X}^*}), y^*(g(k_1))=1 \} \cup \{ y^*(f(k_2)):  y^* \in  Ext(B_{\mathbb{X}^*}), y^*(g(k_2))=1 \}.
	  \]
	  As  $g(k_1)= -\frac{f(k_1)}{r}, g(k_2)= \frac{f(k_2)}{r},$ it is easy to observe that $\Omega= \{-r, r\}.$ So, $\sup \Omega+ \inf \Omega =0.$ Following Theorem \ref{continuous:characterization}, we get $g \perp_{\rho} f$.     \\
	  Observe that for any $k \in M_f,$ $g(k)= \frac{1}{2} f(k).$ So, it is easy to observe that
	 \[
	 \{y^*(g(k)): k \in K, y^* \in Ext(B_{\mathbb{X}^*}), y^*(f(k))= 1\} = \bigg\{ \frac{1}{2}\bigg\}.
	 \]
	 Following Theorem \ref{continuous:characterization}, we get $f \not \perp_{\rho} g.$ This contradicts that $f$ is $\rho$-right symmetric. Therefore $f$ satisfies the condition (ii).\\
	 
	 To complete the theorem we now need to show that $f$ satisfies (iii). Suppose on the contrary that $\|f(k_0)\|=1$ and $f(k_0)$ is not $\rho$-right symmetric. Then there exists $w \in S_{\mathbb{X}}$ such that $w \perp_{\rho} f(k_0)$ and $f(k_0) \not  \perp_{\rho} w.$ Let $$\Gamma= \{ y^*(w): y^* \in Ext(J(f(k_0)))\}.$$
	 So, $ \sup \Gamma + \inf \Gamma \neq 0.$ Let us consider the following cases: 
	  
	  \textbf{Case~1:} 
	  Suppose that $\sup \Gamma + \inf \Gamma  > 0.$ Let $\sup \Gamma + \inf \Gamma = r > 0.$
	  Clearly. $\sup \Gamma \geq \frac{r}{2}.$
	For any $v^* \in Ext(J(f(k_0))) ,$   consider the set 
	   $$G_{v^*}= \{ x^* \in S_{\mathbb{X}^*}: |(x^* - v^*) w| < \frac{r}{2} \}.$$
	    Clearly, $G_{v^*}$ is an weak* open subset of $S_{\mathbb{X}^*}.$ Let \[\mathcal{G}= \underset{v^* \in Ext(J(f(k_0)))}{\bigcup} G_{v^*}. \]
	   Observe that  the set $V= \{ x \in \mathbb{X}: x \in S_{\mathbb{X}}, J(x) \subseteq \mathcal{G}\}$ is an open set of $S_{\mathbb{X}}.$ Moreover, $V \cap f(K)$ is non-empty and $V \cap f(K)$ is   an open subset of $f(K) \cap S_{\mathbb{X}}.$ Consider $U = f^{-1} (V \cap f(K)).$ As $f$ is continuous, $U$ is open in $K.$ Clearly, $k_0 \in U.$
	  As $K$ is perfectly normal and $\{k_0\},   K \setminus U $ are two disjoint closed sets in $K$, so by the Urysohn's lemma (see \cite{Munkres}), there exists a continuous function $\alpha: K \to [0,1]$ such that $\alpha^{-1}(\{1\})= \{k_0\}$ and $\alpha^{-1}(\{0\})= K \setminus U.$ Define $g : K \to \mathbb{X}$ such that $g(k)= \alpha(k)w, \forall k \in K.$   Clearly, $g \in C(K, \mathbb{X}).$ 
	  It is easy to see that $M_g = \{ k_0\}$ and $g(k_0) =w.$ As $w \perp_{\rho} f(k_0),$ from Lemma \ref{lemma:singleton}, we get $g \perp_{\rho} f.$
	  Let $$\Omega = \bigg\{ y^*(g(k)): k \in K, y^* \in Ext(B_{\mathbb{X}^*}), y^*(f(k))=1 \bigg\}.$$
	  As $g(k)=0, ~\forall k \notin U,$ so
	  $$\Omega \subseteq \bigg\{ \alpha(k) y^*(w): k \in U, y^* \in Ext(B_{\mathbb{X}^*}), y^*(f(k))=1 \bigg\} \cup \{0\}.$$
	   As $k_0 \in U$ and $\alpha(k_0)=1,$ therefore for any $y^* \in Ext( J(f(k_0))),$ we have $y^*(w) \in \Omega.$ Thus $\Gamma \subset \Omega$ and so, $\sup \Omega \geq  \sup \Gamma \geq \frac{r}{2} > 0.$\\
	Suppose $\inf \Gamma = u^*(w),$ for some $u^* \in J(f(k_0)).$  For any $k \in U ,$ $J(f(k) \subset \mathcal{G}.$ Let $k \in U$ and $y^* \in J(f(k))$.
 Suppose that  $y^* \in G_{v^*},$ for some $v^* \in Ext(J(f(k_0))).$ This implies $|y^*(w)- v^*(w)| < \frac{r}{2}.$
	So, we have 
	\[
	u^* (w) - \frac{r}{2} \leq v^*(w) - \frac{r}{2} < y^*(w)< v^*(w) + \frac{r}{2}.
	\]
	When $y^*(w) \geq 0$  we have $\alpha(k)y^*(w) \geq 0$ and  
whenever $y^*(w)< 0$ we have $\alpha(k)y^*(w) \geq y^*(w).$ 
Therefore, 
\[
\alpha(k)y^*(w) \geq y^*(w) > 	u^* (w) - \frac{r}{2}.
\]
This implies $\inf \Omega \geq  \min \{0, \inf \Gamma -\frac{r}{2} \}.$ Therefore,          $
 \sup \Omega + \inf \Omega \geq  \frac{r}{2} >0. $
	Following Theorem \ref{continuous:characterization}, we get $f \not \perp_{\rho} g.$    This contradicts the fact that $f$ is $\rho$-right symmetric.
	  
	
	  \textbf{Case~2:}
	   Suppose that $\sup \Gamma + \inf \Gamma  < 0.$ Let $ \sup \Gamma + \inf \Gamma = -r.$ So, $r > 0.$ Clearly, $\inf \Gamma \leq -\frac{r}{2}.$
	 For any $v^* \in Ext(J(f(k_0))) ,$ let $G_{v^*}$ and $\mathcal{G}$ be the same set as described in Case 1. Similarly, let $U \subset K$ and $g \in C(K, \mathbb{X})$ be defined as in Case 1. Proceeding similarly as before we obtain $g \perp_{\rho} f.$ 
	  Let 
	 $$\Omega = \bigg\{ y^*(g(k)): k \in K, y^* \in Ext(B_{\mathbb{X}^*}), y^*(f(k))=1 \bigg\}.$$
	  Observe that  $\Gamma \subset \Omega $ and so, $\inf  \Omega \leq  \inf \Gamma < -\frac{r}{2} < 0.$    Suppose $\sup \Gamma = u^*(w),$ for some $u^* \in Ext(J(f(k_0))).$ 
	  Let $k \in U$ and $ y^* \in J(f(k)).$
	    Again proceeding similarly, there exists $v^* \in Ext(J(f(k_0)))$ such that $|y^*(w)- v^*(w)| < \frac{r}{2}.$
	    So, we have 
	    \[
	  v^*(w) - \frac{r}{2} < y^*(w)<  v^*(w) + \frac{r}{2} \leq u^*(w) + \frac{r}{2}.
	    \]
	    When $y^*(w) \leq 0,$ we have  $\alpha(k)y^*(w) \leq 0$ and  
	    whenever $y^*(w) > 0,$ we have  $\alpha(k)y^*(w) \leq  y^*(w). $
	    Therefore, 
	    \[
	    \alpha(k)y^*(w) \leq  y^*(w) < 	x_0^* (w) + \frac{r}{2}.
	    \]
	    This implies $\sup \Omega \leq \max \{0, \sup \Gamma +\frac{r}{2} \}.$ 
	    
	    Therefore,          $
	    	\sup \Omega + \inf \Omega \leq  -\frac{r}{2} <0. $
	  Following Theorem \ref{continuous:characterization}, we get $f \not \perp_{\rho} g.$      
	This contradicts the fact that $f$ is $\rho$- right symmetric. This completes the  theorem.

	              
	\end{proof}

		\begin{theorem}\label{left:rho:C(K)}
		Let $K$ be a  compact, perfectly normal space.  Then   $f \in S_{C(K)}$ is $\rho$-left symmetric if and only if   $|f(k)|< 1$ implies that	 $f(k) = 0.$
			\end{theorem}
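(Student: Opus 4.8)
The plan is to read the characterisation off from Corollary \ref{C(K)} after a convenient reformulation of the hypothesis. Note first that since $f \in S_{C(K)}$ we have $|f(k)| \le \|f\| = 1$ for every $k$, so the condition ``$|f(k)| < 1 \implies f(k) = 0$'' is exactly the statement that $f$ takes only the values $-1,0,1$. In this language $M_f = \{k \in K : |f(k)| = 1\} = f^{-1}(\{-1,1\})$, and $f(k) = 0$ for every $k \notin M_f$.

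For the \emph{necessity} I would simply invoke Theorem \ref{left:rho:continuous}: if $f$ is $\rho$-left symmetric, then $f(k) = 0$ for all $k \notin M_f$, and since $k \notin M_f$ is precisely the condition $|f(k)| < 1$, the stated requirement follows at once.

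The substance lies in the \emph{sufficiency}. The guiding idea is to express both orthogonality relations through the single product function $h := fg \in C(K)$. On $M_f$ we have $|f| = 1$, hence $sgn(f(k))\,g(k) = f(k)g(k) = h(k)$; on $M_g$ we have $|g| = 1$, hence $sgn(g(k))\,f(k) = g(k)f(k) = h(k)$. Thus by Corollary \ref{C(K)},
\[
f \perp_{\rho} g \iff \sup_{M_f} h + \inf_{M_f} h = 0, \qquad g \perp_{\rho} f \iff \sup_{M_g} h + \inf_{M_g} h = 0 .
\]
By homogeneity of $\rho$-orthogonality I may assume $\|g\| = 1$ (the case $g = 0$ being trivial), so that $|h| = |f|\,|g| \le 1$ everywhere, all four extrema are attained by compactness, and crucially $|h| = |g|$ on $M_f$. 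Now assume $f \perp_\rho g$ and set $P = \sup_{M_f} h$, $Q = \inf_{M_f} h$, so $P + Q = 0$ and $P = -Q \ge 0$. Since $|h| = |g|$ on $M_f$, we obtain $\sup_{M_f} |g| = \max(P, -Q) = P$, and I split into two cases. If $P < 1$, then $|g| \le P < 1$ on $M_f$, so $M_g \cap M_f = \emptyset$; as $f = 0$ off $M_f$, it follows that $h \equiv 0$ on $M_g$, whence $\sup_{M_g} h + \inf_{M_g} h = 0$. If $P = 1$, the value $P = 1$ is attained at some $k_P \in M_f$ with $h(k_P) = 1$, forcing $|g(k_P)| = 1$, i.e. $k_P \in M_g$; likewise $Q = -1$ is attained at some $k_Q \in M_f \subseteq$ with $h(k_Q) = -1$ and $|g(k_Q)| = 1$, so $k_Q \in M_g$. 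Since $|h| \le 1$ globally, this yields $\sup_{M_g} h = 1$ and $\inf_{M_g} h = -1$, so again $\sup_{M_g} h + \inf_{M_g} h = 0$. In either case $g \perp_\rho f$, proving that $f$ is $\rho$-left symmetric.

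The step I expect to require the most care is the reduction to the product function together with the identity $\sup_{M_f} |g| = \max(P, -Q) = P$: this is what converts the analytic hypothesis $P + Q = 0$ into the clean dichotomy ``either $g$ fails to attain its norm on $M_f$, or it attains it at points where $h = \pm 1$''. Once this observation is in place, the two cases are routine, and the whole argument avoids the Urysohn constructions used in the vector-valued theorems.
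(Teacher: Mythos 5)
Your proposal is correct and takes essentially the same approach as the paper: necessity is quoted from Theorem \ref{left:rho:continuous}, and sufficiency is obtained by applying Corollary \ref{C(K)} to the two sets $\{sgn(f(k))\,g(k): k\in M_f\}$ and $\{sgn(g(k))\,f(k): k\in M_g\}$, using compactness to attain the extrema and splitting on whether $|g|$ reaches the value $1$ on $M_f$. Your product function $h=fg$ is only a bookkeeping device --- it collapses the paper's separate singleton case $M_f=\{k_0\}$ and its same-sign/different-sign sub-cases into the single dichotomy $P<1$ versus $P=1$ --- but the key lemma and the case structure are the same.
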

	
		\begin{proof}
		We only prove the sufficient part as the necessary part follows immediately from Theorem \ref{left:rho:continuous}.  Let $f \perp_{\rho} g.$ Suppose 
		\[
		\Omega= \{ sgn(f(k)) g(k): k \in K, |f(k)|=\|f\|=1\}.
		\]
		From  Corollary \ref{C(K)}, we get $\sup \Omega+ \inf \Omega=0.$ 
		 Suppose 
		\[
		\Omega_1= \{ sgn(g(k)) f(k): k \in K, |g(k)|=\|g\|=1\}.
		\]
			Clearly, $\sup \Omega_1 \leq 1$ and $\inf \Omega_1 \geq -1.$
		First 	consider the case that $M_f$ is singleton. Let $M_f=\{k_0\},$ for some $k_0 \in K.$ Then $sgn(f(k_0))g(k_0)=0.$ This implies $g(k_0)=0.$ So, $M_g \subseteq K \setminus \{k_0\}.$ Now as $M_f=\{k_0\}$ and $|f(k)|< 1 \implies f(k)=0$, we easily get that for any $k \in M_g, f(k)=0.$ Therefore, $\Omega_1 = \{0\}.$ From Corollary \ref{C(K)}, we get $g \perp_{\rho} f.$
			
			Now consider $M_f$ is not singleton.
		  As $\Omega$ is compact, we assume $\sup \Omega= sgn(f(k_1)) g(k_1)$ and $\inf \Omega= sgn(f(k_2)) g(k_2),$ where $|f(k_1)|=|f(k_2)|=1.$
		 So 
		 \begin{eqnarray}\label{eqn:1}
		 	sgn(f(k_1)) g(k_1)+  sgn(f(k_2)) g (k_2)=0.
		 \end{eqnarray} 
		This implies $|g(k_1)|=|g(k_2)|.$ Now two cases can be possible:
		
		Case 1: Let $|g(k_1)|=|g(k_2)|<1.$ As $\sup \Omega= sgn(f(k_1)) g(k_1)$ and $\inf \Omega= sgn(f(k_2)) g(k_2),$ it is clear that $M_f \cap M_g = \emptyset.$ So, for any $k \in M_g,$ $f(k)=0.$ Therefore, $\Omega_1=\{0\}$ and from Corollary \ref{C(K)}, we get $g \perp_{\rho} f.$
		
		Case 2: Let $|g(k_1)|=|g(k_2)|=1.$ If $g(k_1), g(k_2)$ have the same sign, then from equation (\ref{eqn:1}), $f(k_1), f(k_2)$ have the different signs.  Since $|f(k_1)|= |f(k_2)|=1$, it is immediate that $	sgn(g(k_1)) f(k_1)+  sgn(g(k_2)) f (k_2)=0.$ Moreover, as $|sgn(g(k_1)) f(k_1)|=|sgn(g(k_2)) f(k_2)|=1 $ it is easy to observe that  $\sup \Omega_1 + \inf \Omega_1 =0.$  Similarly, for the case when $g(k_1), g(k_2)$ have the different sign, using analogous argument we can show $\sup \Omega_1+ \inf \Omega_1=0.$ Therefore, from Corollary \ref{C(K)}, we get $g \perp_{\rho} f.$  This completes the theorem.
	\end{proof}

		\begin{cor}
		Let $K$ be a  compact, connected, perfectly normal space.  Then   $f \in S_{C(K)}$ is $\rho$-left symmetric if and only if   $f(k)=1, \forall k \in K$ or $f(k)=-1, \forall k \in K.$
	\end{cor}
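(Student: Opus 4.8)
The plan is to derive this corollary directly from Theorem \ref{left:rho:C(K)}, using the connectedness of $K$ to rule out the value $0$. First I would restate the characterization of Theorem \ref{left:rho:C(K)} in a pointwise form: since $f \in S_{C(K)}$, the condition that $|f(k)| < 1$ implies $f(k) = 0$ holds precisely when, for every $k \in K$, either $|f(k)| = 1$ or $f(k) = 0$. Thus $f$ is $\rho$-left symmetric if and only if $f(K) \subseteq \{-1, 0, 1\}$.

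Next I would bring in continuity together with connectedness. As $f$ is continuous and $K$ is connected, the image $f(K)$ is a connected subset of $\mathbb{R}$, hence an interval. The only subintervals of the three-point set $\{-1,0,1\}$ are the singletons (and the empty set), so $f(K)$ must consist of a single value. Because $\|f\| = 1$, that value has modulus $1$, which forces $f(K) = \{1\}$ or $f(K) = \{-1\}$; equivalently, $f(k) = 1$ for all $k \in K$ or $f(k) = -1$ for all $k \in K$.

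For the reverse implication, if $f(k) = 1$ for all $k$ or $f(k) = -1$ for all $k$, then $|f(k)| = 1$ for every $k$, so the implication appearing in Theorem \ref{left:rho:C(K)} is satisfied vacuously and $f$ is $\rho$-left symmetric. I expect no genuine obstacle here, as every step is elementary; the only point deserving a moment's attention is the passage from the inequality condition of Theorem \ref{left:rho:C(K)} to the set inclusion $f(K) \subseteq \{-1,0,1\}$, after which the interval argument finishes the proof at once.
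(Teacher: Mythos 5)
Your proposal is correct and is essentially the argument the paper intends: the corollary is stated without proof as an immediate consequence of Theorem \ref{left:rho:C(K)}, and your route — noting that the theorem forces $f(K)\subseteq\{-1,0,1\}$ (real scalars, $\|f\|\le 1$), that connectedness of $K$ and continuity of $f$ make $f(K)$ a single point, and that $\|f\|=1$ rules out the value $0$, with the converse holding vacuously — is exactly that deduction.
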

	
		\begin{theorem}\label{right:rho:C(K)}
		Let $K$ be a  compact, perfectly normal space.  Then   $f \in S_{C(K)}$ is $\rho$-right symmetric if and only if   $f$ satisfies the following : 
		\begin{itemize}
			\item[(i)] $f(k) \neq 0, ~ \forall k \in K$
			\item[(ii)] $|f(k_1)| = |f(k_2)|$ implies that $|f(k_1)|=|f(k_2)|=1$
		\end{itemize} 
	\end{theorem}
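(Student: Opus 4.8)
The plan is to derive necessity as an immediate specialization of Theorem \ref{right:rho:continuous} to the scalar case, and to prove sufficiency by a short direct computation with the characterization of $\rho$-orthogonality in $C(K)$ furnished by Corollary \ref{C(K)}.

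For the necessity I would simply take $\mathbb{X}=\mathbb{R}$ in Theorem \ref{right:rho:continuous}. Reading $\|\cdot\|$ as $|\cdot|$, conditions (i) and (ii) of that theorem become verbatim conditions (i) and (ii) of the present statement. The remaining condition (iii) of Theorem \ref{right:rho:continuous}, namely that $f(k)$ be $\rho$-right symmetric whenever $|f(k)|=1$, is automatic in $\mathbb{R}$: every non-zero real number is $\rho$-right symmetric, since $y\perp_\rho x$ with $x\neq 0$ forces $y=0$ (for $y\neq 0$ we have $\rho'(y,x)=sgn(y)\,x$, which vanishes only if $x=0$), and then $x\perp_\rho 0$ holds trivially. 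Thus conditions (i)--(ii) are precisely the necessary ones, with nothing lost.

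For the sufficiency, assume $f\in S_{C(K)}$ satisfies (i) and (ii); I must show $g\perp_\rho f\implies f\perp_\rho g$ for every $g\in C(K)$. By homogeneity I normalize $\|g\|=1$ (the case $g=0$ being trivial) and recall $\|f\|=1$. Setting $\Omega_1=\{sgn(g(k))f(k): k\in K,\ |g(k)|=1\}$ and $\Omega=\{sgn(f(k))g(k): k\in K,\ |f(k)|=1\}$, Corollary \ref{C(K)} translates $g\perp_\rho f$ into $\sup\Omega_1+\inf\Omega_1=0$ and the goal into $\sup\Omega+\inf\Omega=0$. Since $M_g=|g|^{-1}(\{1\})$ is a compact non-empty subset of $K$ and $k\mapsto sgn(g(k))f(k)$ is continuous there, the extrema of $\Omega_1$ are attained, say at $k_1,k_2\in M_g$ with $\sup\Omega_1=sgn(g(k_1))f(k_1)$ and $\inf\Omega_1=sgn(g(k_2))f(k_2)$.

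The crux is to force $k_1,k_2\in M_f$. If $\sup\Omega_1=\inf\Omega_1=0$, then $f$ would vanish on the non-empty set $M_g$, contradicting (i); hence $\sup\Omega_1=|f(k_1)|>0$ and $\inf\Omega_1=-|f(k_2)|<0$ with $|f(k_1)|=|f(k_2)|$, and condition (ii) upgrades this common modulus to $1$, so $k_1,k_2\in M_f\cap M_g$. At such points $|f|=|g|=1$, whence $sgn(f(k_i))g(k_i)=f(k_i)g(k_i)=sgn(g(k_i))f(k_i)$, giving $sgn(f(k_1))g(k_1)=1$ and $sgn(f(k_2))g(k_2)=-1$. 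Because $|sgn(f(k))g(k)|=|g(k)|\leq 1$ forces $\Omega\subseteq[-1,1]$, the values $\pm 1$ are exactly the extrema of $\Omega$, so $\sup\Omega+\inf\Omega=0$ and $f\perp_\rho g$, completing the proof. The only delicate point, and the one I expect to be the main obstacle, is precisely this recognition that condition (ii) is what promotes the common modulus $|f(k_1)|=|f(k_2)|$ to $1$, driving the witnessing points into $M_f$ so that the elementary identity $sgn(f)\,g=sgn(g)\,f$ on $M_f\cap M_g$ can be invoked; the rest is bookkeeping with signs and compactness.
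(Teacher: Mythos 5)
Your proposal is correct and takes essentially the same route as the paper: necessity by specializing Theorem \ref{right:rho:continuous} to $\mathbb{X}=\mathbb{R}$ (the paper likewise just cites that theorem), and sufficiency via Corollary \ref{C(K)} together with compactness of $\Omega_1$, attainment of its extrema at points $k_1,k_2\in M_g$, and condition (ii) promoting $|f(k_1)|=|f(k_2)|$ to $1$. If anything your write-up is slightly tighter than the paper's: you explicitly dispose of the degenerate case $\sup\Omega_1=\inf\Omega_1=0$ using condition (i), which the paper leaves implicit, and your identity $sgn(f(k_i))g(k_i)=sgn(g(k_i))f(k_i)$ on $M_f\cap M_g$ replaces the paper's two-case analysis on the signs of $f(k_1),f(k_2)$.
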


	\begin{proof}
	We only prove the sufficient part as the necessary part follows immediately from Theorem \ref{right:rho:continuous}. Let $g \perp_{\rho} f.$ Suppose 
	\[
	\Omega= \{ sgn(g(k)) f(k): k \in K, |g(k)|=\|g\|=1\}.
	\]
	From  Corollary \ref{C(K)}, we get $\sup \Omega+ \inf \Omega=0.$ As $\Omega$ is compact, we assume $\sup \Omega= sgn(g(k_1)) f(k_1)$ and $\inf \Omega= sgn(g(k_2)) f(k_2),$ where $|g(k_1)|=|g(k_2)|=1.$ So $$sgn(g(k_1)) f(k_1)+  sgn(g(k_2)) f(k_2)=0.$$ Therefore, $|f(k_1)|=|f(k_2)|$. Since $f$ satisfies the condition (ii), $|f(k_1)|=|f(k_2)|=1=\|f\|.$ 	Let
	\[
	\Omega_1= \{ sgn(f(k)) g(k): k \in K, |f(k)|=\|f\|=1\}.
	\]
	Clearly, $\sup \Omega_1 \leq 1$ and $\inf \Omega_1 \geq -1.$
	Let us now consider the cases: 
	
	Case 1: Let $f(k_1), f(k_2)$ have the same sign. Then $g(k_1), g(k_2)$ both have different signs. So, $sgn(f(k_1)) g(k_1)$, $sgn(f(k_2)) g(k_2)$ both have different sign and $|sgn(f(k_1)) g(k_1)|= |sgn(f(k_2)) g(k_2)|=1.$ It is now immediate that $\sup \Omega_1+\inf \Omega_1=0.$ Applying Corollary \ref{C(K)}, we get $f \perp_{\rho} g.$
	
	Case 2:  Let $f(k_1), f(k_2)$ have different signs. Then $g(k_1), g(k_2)$ both have the same sign. Again following arguments as in Case 1, we can easily show $f \perp_\rho g.$\\
	This completes the proof.

	\end{proof}
	
	As an immediate consequence of Theorems \ref{left:continuous},  \ref{right:continuous}, \ref{left2:continuous}, and \ref{right2:continuous}  we obtain the following corollary which characterize $(\rho_{+})$-left (right) symmetric and $(\rho_{-})$-left (right) symmetric points in $\ell_{\infty}^n(\mathbb{X}).$
	
	
	\begin{cor}\label{directsum}
		Let $\mathbb{X}$ be a Banach space and $\widetilde{x}=(x_1, x_2, \ldots, x_n) \in S_{\ell_{\infty}^n(\mathbb{X})}.$
		\begin{itemize}
			\item[(i)] $\widetilde{x}$ is $(\rho_{+})$-left symmetric if and only if there exists $i \in \{1, 2, \ldots, n\}$ such that $\|x_i\|=1, x_j=0, \forall j \neq i$ and $x_i$ is a $(\rho_{+})$-left symmetric points of $\mathbb{X}.$
			
			\item[(ii)] $\widetilde{x}$ is $(\rho_{+})$-right symmetric if and only if      exactly one of the following holds: 
			
			\begin{itemize}
			
			\item[(a)] If $\|x_i\|=1, \|x_j\|< 1, \forall j\neq i$ then $x_i$ is a $(\rho_{+})$-right symmetric point in $\mathbb{X}$ and for any $k \in \{1, 2, \ldots, n\} \setminus \{i\},$  there exists no $w \in S_{\mathbb{X}}$ such that $w \perp_{\rho_+} x_k$
			
			\item[(b)] If $\|x_i\|=\|x_j\|=1, $ for some $i, j \in \{1, 2, \ldots, n\}$ then for any $k \in \{1, 2, \ldots, n\}$ there exists no $w \in S_{\mathbb{X}}$ such that $w \perp_{\rho_+} x_k.$
			\end{itemize}
			
				\item[(iii)] $\widetilde{x}$ is $(\rho_{-})$-left symmetric if and only if there exists $i \in \{1, 2, \ldots, n\}$ such that $\|x_i\|=1, x_j=0, \forall j \neq i$ and $x_i$ is a $(\rho_{-})$-left symmetric points of $\mathbb{X}.$
			
			\item[(iv)] $\widetilde{x}$ is $(\rho_{-})$-right symmetric if and only if     exactly one of following holds: 
			
			\begin{itemize}
					\item [(a)] If $\|x_i\|=1, \|x_j\|< 1, \forall j\neq i$ then $x_i$ is a $(\rho_{-})$-right symmetric point in $\mathbb{X}$ and for any $k \in \{1, 2, \ldots, n\} \setminus \{i\},$  there exists no $w \in S_{\mathbb{X}}$ such that $w \perp_{\rho_-} x_k$
				
				\item [(b)] If $\|x_i\|=\|x_j\|=1, $ for some $i, j \in \{1, 2, \ldots, n\}$ then for any $k \in \{1, 2, \ldots, n\}$ there exists no $w \in S_{\mathbb{X}}$ such that $w \perp_{\rho_-} x_k.$
				\end{itemize}

		\end{itemize}
	\end{cor}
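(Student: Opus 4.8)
The plan is to realize $\ell_{\infty}^n(\mathbb{X})$ as a space of continuous $\mathbb{X}$-valued functions and then invoke the four preceding characterization theorems verbatim. First I would equip the finite set $K=\{1,2,\ldots,n\}$ with the discrete topology. Being finite, $K$ is compact and Hausdorff, and since every subset of $K$ is clopen, $K$ is perfectly normal: it is normal, and each closed set, being itself open, is trivially a $G_\delta$. Every function on a discrete space is continuous, so $C(K,\mathbb{X})=\ell_{\infty}(K,\mathbb{X})=\ell_{\infty}^n(\mathbb{X})$, and under this identification the point $\widetilde{x}=(x_1,\ldots,x_n)$ corresponds to the function $f$ with $f(i)=x_i$. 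In particular $\|f\|=\max_{1\le i\le n}\|x_i\|=1$ and $M_f=\{\,i:\|x_i\|=1\,\}$.

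With this dictionary in place, each item follows by applying the matching theorem and translating its conclusion. For (i), Theorem \ref{left:continuous} says $f$ is $(\rho_{+})$-left symmetric if and only if there is a single index $k_0=i$ with $f(i)=x_i\in S_{\mathbb{X}}$, $f(j)=x_j=0$ for $j\neq i$, and $x_i$ is $(\rho_{+})$-left symmetric in $\mathbb{X}$; this is precisely the stated condition. Item (iii) is identical after replacing Theorem \ref{left:continuous} by Theorem \ref{left2:continuous} and $\rho_{+}$ by $\rho_{-}$.

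For (ii), I would apply Theorem \ref{right:continuous}, whose dichotomy is governed by whether $M_f$ is a singleton. If $M_f=\{i\}$, equivalently $\|x_i\|=1$ and $\|x_j\|<1$ for all $j\neq i$, we are in case (a), where $x_i$ must be $(\rho_{+})$-right symmetric and $w\perp_{\rho_+}x_k\implies w=0$ for each $k\neq i$; otherwise $M_f$ contains at least two indices, i.e.\ $\|x_i\|=\|x_j\|=1$ for some $i\neq j$, and we are in case (b), where $w\perp_{\rho_+}x_k\implies w=0$ for every $k$. The only point needing a remark is the equivalence of the condition ``$w\perp_{\rho_+}x_k\implies w=0$'' with the corollary's phrasing ``there exists no $w\in S_{\mathbb{X}}$ with $w\perp_{\rho_+}x_k$''; this is immediate from the homogeneity of $\rho_{+}$-orthogonality, since any nonzero orthogonal vector can be normalized to a unit one. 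Item (iv) is handled the same way using Theorem \ref{right2:continuous}.

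I expect essentially no obstacle beyond bookkeeping, since all the mathematical content has already been established in the function-space setting and the corollary is merely its specialization to finite discrete $K$. The only care required is verifying that finite discrete spaces meet every hypothesis (compactness and perfect normality) of the cited theorems, and faithfully matching the singleton-versus-non-singleton condition on $M_f$ to the size conditions on the norms $\|x_i\|$.
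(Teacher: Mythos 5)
Your proposal is correct and is exactly the paper's route: the authors state this corollary as an immediate consequence of Theorems \ref{left:continuous}, \ref{right:continuous}, \ref{left2:continuous}, and \ref{right2:continuous}, applied to $\ell_{\infty}^n(\mathbb{X})=C(K,\mathbb{X})$ for the finite discrete (hence compact, perfectly normal) space $K=\{1,\ldots,n\}$. Your added details --- verifying perfect normality of finite discrete spaces, identifying $M_f=\{i:\|x_i\|=1\}$, and using homogeneity to pass between ``$w\perp_{\rho_\pm}x_k\implies w=0$'' and the nonexistence of unit orthogonal vectors --- are precisely the bookkeeping the paper leaves implicit.
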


	 The next corollary provides necessary condition for $\rho$-left (right) symmetric points of $\ell_{\infty}^n(\mathbb{X}).$
	 
	 \begin{cor}\label{directsum2}
	 		Let $\mathbb{X}$ be a Banach space and $\widetilde{x}=(x_1, x_2, \ldots, x_n) \in S_{\ell_{\infty}^n(\mathbb{X})}.$
	 		\begin{itemize}
	 				\item [(i)] If $\widetilde{x}$ is $\rho$-left symmetric then $\|x_k\|< 1$ implies that $x_k=0.$   
	 			
	 			\item[(ii)] If $\widetilde{x}$ is $\rho$-right symmetric then the following holds:
	 			
	 			\subitem(a) $x_k \neq 0, \forall k \in \{1, 2, \ldots, n\}$
	 			\subitem(b) $\|x_i\|=\|x_j\|$ implies that $\|x_i\|=\|x_j\|=1$
	 			\subitem(c) $\|x_i\|=1$ implies that $x_i$ is $\rho$-right symmetric.
	 		\end{itemize}
	 \end{cor}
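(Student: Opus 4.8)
The plan is to recognize $\ell_{\infty}^n(\mathbb{X})$ as an instance of $C(K,\mathbb{X})$ for a suitable finite discrete space $K$, and then to read off both statements directly from Theorems \ref{left:rho:continuous} and \ref{right:rho:continuous}. First I would take $K = \{1,2,\dots,n\}$ endowed with the discrete topology. Being finite, $K$ is compact, and being discrete (hence metrizable) it is perfectly normal, so the standing hypotheses of those two theorems are met. Since every map from a finite discrete space into $\mathbb{X}$ is automatically continuous and bounded, we have $C(K,\mathbb{X}) = \ell_{\infty}(K,\mathbb{X}) = \ell_{\infty}^n(\mathbb{X})$, and under this identification the tuple $\widetilde{x} = (x_1,\dots,x_n)$ corresponds to the function $f$ given by $f(k)=x_k$. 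Because $\widetilde{x}\in S_{\ell_{\infty}^n(\mathbb{X})}$ we have $\|f\| = \max_{k}\|x_k\| = 1$, and therefore $M_f = \{k : \|x_k\| = 1\}$.

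For part (i), I would assume $\widetilde{x}$ (equivalently $f$) is $\rho$-left symmetric and apply Theorem \ref{left:rho:continuous}, which yields $f(k) = 0$ for all $k \notin M_f$. Translating back, $k \notin M_f$ is exactly the condition $\|x_k\| < 1$, so the conclusion reads precisely $\|x_k\| < 1 \implies x_k = 0$, as claimed.

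For part (ii), I would assume $\widetilde{x}$ is $\rho$-right symmetric and invoke Theorem \ref{right:rho:continuous}. Its three conclusions---that $f(k)\neq 0$ for every $k$, that $\|f(k_1)\| = \|f(k_2)\|$ forces both norms to equal $1$, and that $\|f(k)\| = 1$ forces $f(k)$ to be $\rho$-right symmetric---translate verbatim into (a), (b), and (c) after replacing $f(k)$ by $x_k$ and the points $k_1,k_2\in K$ by the indices $i,j\in\{1,\dots,n\}$.

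There is no genuine obstacle here: once the finite-discrete identification is in place, both parts are immediate specializations of the continuous-function results, and all of the analytic work has already been carried out in the proofs of Theorems \ref{left:rho:continuous} and \ref{right:rho:continuous}. The only point requiring a word of care is confirming that the ambient space $K$ satisfies the hypothesis of being compact and perfectly normal, which the finite discrete topology supplies trivially.
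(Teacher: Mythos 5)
Your proposal is correct and is essentially the paper's own (implicit) argument: the paper derives Corollary \ref{directsum2} precisely by noting that $\ell_{\infty}^n(\mathbb{X}) = C(K,\mathbb{X})$ for the finite discrete space $K=\{1,2,\ldots,n\}$, which is trivially compact and perfectly normal, and then specializing Theorems \ref{left:rho:continuous} and \ref{right:rho:continuous}. Nothing further is needed.
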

		
		\section*{\textbf{Symmetricity of $\rho$-orthogonality in the space of operators}}
		
		In this section we study the symmetric elements with respect to $\rho_{+}$-orthogonality, $\rho_{-}$-orthogonality and $\rho$-orthogonality in the space of bounded linear operators.
		
		\begin{prop}
				Let $\mathbb{X}$ be a  reflexive Banach space, $ \mathbb{Y}$ be any Banach space and  let $T , S \in \mathbb{K}(\mathbb{X}, \mathbb{Y}).$ Suppose that 
				\[
				\Omega = \bigg\{ y^*(Sx): x \in Ext(B_{\mathbb{X}}), y^* \in Ext(B_{\mathbb{Y}^*}), y^*(Tx)=\|T\| \bigg\}.
				\]
				Then 
					\begin{itemize}
					\item [(i)] $ T \perp_{\rho_+}  S$ if and only if $\sup \Omega=0.$
					\item [(ii) ] $ T \perp_{\rho_-}  S$ if and only if $\inf \Omega=0.$
					\item [(iii)] $ T \perp_{\rho}  S$ if and only if $\sup \Omega + \inf \Omega=0.$
				\end{itemize}  
		\end{prop}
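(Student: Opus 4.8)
The plan is to recognize the statement as a direct application of Lemma \ref{functional} in the Banach space $\mathbb{K}(\mathbb{X},\mathbb{Y})$, playing $T$ in the role of the unit vector $x$ and $S$ in the role of the direction $y$. Since $\rho_+$-, $\rho_-$- and $\rho$-orthogonality are homogeneous, I would first normalize so that $\|T\|=1$ (the degenerate case $T=0$ being excluded, as then $\|T\|$ appears in the defining condition of $\Omega$). Writing $J(T)=\{\Phi\in S_{\mathbb{K}(\mathbb{X},\mathbb{Y})^*}:\Phi(T)=\|T\|\}$ for the set of support functionals of $T$, Lemma \ref{functional} then gives $\rho'_+(T,S)=\sup\{\Phi(S):\Phi\in Ext(J(T))\}$ and $\rho'_-(T,S)=\inf\{\Phi(S):\Phi\in Ext(J(T))\}$. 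So the entire task reduces to identifying the extreme support functionals $Ext(J(T))$ and evaluating them at $S$.

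The key step is to show that $Ext(J(T))=\{x\otimes y^*:x\in Ext(B_{\mathbb{X}}),\,y^*\in Ext(B_{\mathbb{Y}^*}),\,y^*(Tx)=\|T\|\}$, where $(x\otimes y^*)(R):=y^*(Rx)$. For this I would invoke the known description of the extreme points of $B_{\mathbb{K}(\mathbb{X},\mathbb{Y})^*}$ for reflexive $\mathbb{X}$ (in the spirit of Ruess--Stegall, and parallel to the description of $Ext(B_{C(K,\mathbb{X})^*})$ used earlier in the paper): these extreme points are exactly the functionals $x\otimes y^*$ with $x\in Ext(B_{\mathbb{X}})$ and $y^*\in Ext(B_{\mathbb{Y}^*})$, reflexivity of $\mathbb{X}$ being precisely what collapses $\mathbb{X}^{**}$ back to $\mathbb{X}$ and compactness of $T$ being what guarantees $M_T\neq\emptyset$. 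Next I would observe that $J(T)$ is a nonempty, weak*-compact \emph{face} of $B_{\mathbb{K}(\mathbb{X},\mathbb{Y})^*}$: evaluation at $T$ is weak*-continuous (since $T\in\mathbb{K}(\mathbb{X},\mathbb{Y})$), so $J(T)$ is weak*-closed in the weak*-compact ball, and if $\Phi=\tfrac12(\Phi_1+\Phi_2)\in J(T)$ with $\Phi_1,\Phi_2\in B_{\mathbb{K}(\mathbb{X},\mathbb{Y})^*}$ then $\|T\|=\tfrac12(\Phi_1(T)+\Phi_2(T))$ forces $\Phi_1(T)=\Phi_2(T)=\|T\|$, i.e.\ $\Phi_1,\Phi_2\in J(T)$. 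Using the standard fact that $Ext(F)=Ext(B)\cap F$ for a face $F$ of a convex set $B$, this yields the claimed description of $Ext(J(T))$.

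With that in hand the conclusion is immediate. Since $(x\otimes y^*)(S)=y^*(Sx)$, the set $\{\Phi(S):\Phi\in Ext(J(T))\}$ is exactly $\Omega$, and Krein--Milman guarantees $Ext(J(T))\neq\emptyset$, so $\sup\Omega$ and $\inf\Omega$ are finite. Therefore $\rho'_+(T,S)=\sup\Omega$, $\rho'_-(T,S)=\inf\Omega$, and $\rho'(T,S)=\tfrac12(\sup\Omega+\inf\Omega)$. The three equivalences then follow at once from the definitions: $T\perp_{\rho_+}S\iff\rho'_+(T,S)=0\iff\sup\Omega=0$; $T\perp_{\rho_-}S\iff\rho'_-(T,S)=0\iff\inf\Omega=0$; and $T\perp_{\rho}S\iff\rho'(T,S)=0\iff\sup\Omega+\inf\Omega=0$, giving (i), (ii) and (iii) respectively.

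The main obstacle is the identification of $Ext(B_{\mathbb{K}(\mathbb{X},\mathbb{Y})^*})$; this is the one genuinely nontrivial ingredient and the precise point where reflexivity of $\mathbb{X}$ and compactness of the operators enter. Once it is available, the face argument and the passage through Lemma \ref{functional} are purely formal, exactly mirroring the proofs of Proposition \ref{prop:continuous} and Theorem \ref{continuous:characterization}.
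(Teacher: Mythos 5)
Your proposal is correct and takes essentially the same route as the paper: Lemma \ref{functional} applied in $\mathbb{K}(\mathbb{X},\mathbb{Y})$, combined with the description of $Ext(B_{\mathbb{K}(\mathbb{X},\mathbb{Y})^*})$ for reflexive $\mathbb{X}$ (the paper cites \cite{LO} for this), and then evaluation of the extreme support functionals at $S$ to identify the resulting set with $\Omega$. The only difference is presentational: you make explicit the face argument showing $Ext(J(T))=Ext(B_{\mathbb{K}(\mathbb{X},\mathbb{Y})^*})\cap J(T)$ (and the nondegeneracy points $T\neq 0$, $\Omega\neq\emptyset$ via Krein--Milman), steps the paper uses implicitly.
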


		\begin{proof}
			As $\mathbb{X}$ is reflexive, from \cite{LO}, we get
			\[
			Ext(B_{\mathbb{K}(\mathbb{X}, \mathbb{Y})^*})=\{ y^* \otimes x: x \in Ext (B_{\mathbb{X}}), y^* \in Ext(B_{\mathbb{Y}^*})\},
			\]
			where $y^* \otimes x \in \mathbb{K}(\mathbb{X}, \mathbb{Y})^*$ and is defined as $ [y^* \otimes x] (T)= y^*(Tx),$ for any $T \in \mathbb{K}(\mathbb{X}, \mathbb{Y}).$
			Observe that 
		\begin{eqnarray*}
			\rho_{+}'(T, S) &=& \sup \{ \psi(S): \psi \in Ext(J(T))\}\\
			 &=& \sup \{ \psi(S): \psi \in Ext(B_{\mathbb{K}(\mathbb{X}, \mathbb{Y})^*}), \psi(T)= \|T\|\}\\
			&= & \sup \{ [y^* \otimes x] (S): x \in Ext(B_{\mathbb{X}}), y^* \in Ext(B_{\mathbb{Y}^*}), [y^* \otimes x] (T)=\|T\| \}\\
				&= & \sup \{ y^*(Sx): x \in Ext(B_{\mathbb{X}}), y^* \in Ext(B_{\mathbb{Y}^*}), y^*(Tx)=\|T\| \}.
		\end{eqnarray*}
		
		Similarly, 
			\begin{eqnarray*}
			\rho_{-}'(T, S)= \inf \{y^*(Sx): x \in Ext(B_{\mathbb{X}}), y^* \in Ext(B_{\mathbb{Y}^*}),  y^*(Tx)=\|T\| \}.
		\end{eqnarray*}
		Hence the result.
		\end{proof}

		\begin{theorem}\label{left:rank1}
			Let $T \in \mathbb{K}(\mathbb{X}, \mathbb{Y})$ be  a $rank~1$ operator, where  $\mathbb{X}, \mathbb{Y}$ are   reflexive Banach spaces. Suppose $T(x)= f(x)y, ~\forall x \in \mathbb{X},$ where $y \in \mathbb{Y}, f \in \mathbb{X}^*.$
			\begin{itemize}
				\item [(i)] If $T$ is $(\rho_{+})$-left symmetric then  $y$ and $f$ are  $(\rho_{+})$-left symmetric points of $\mathbb{Y}$ and $\mathbb{X}^*,$ respectively.

					\item [(ii)]  If $T$ is $(\rho_{+})$-right symmetric then  $y$ and $f$ are  $(\rho_{+})$-right symmetric points of $\mathbb{Y}$ and $\mathbb{X}^*,$ respectively.
				
					\item [(iii)] If $T$ is $(\rho_{-})$-left symmetric then  $y$ and $f$ are $(\rho_{-})$-left symmetric points of $\mathbb{Y}$ and $\mathbb{X}^*,$ respectively.

				\item [(iv)]  If $T$ is $(\rho_{-})$-right symmetric then  $y$ and $f$ are  $(\rho_{-})$-right symmetric points of $\mathbb{Y}$ and $\mathbb{X}^*,$ respectively.
				
					\item[(v)] If $T$ is $\rho$-left symmetric then  $y$ and $f$ are  $\rho$-left symmetric points of $\mathbb{Y}$ and $\mathbb{X}^*,$ respectively.
				
				\item[(vi)] If $T$ is $\rho$-right symmetric then  $y$ and $f$ are $\rho$-right symmetric points of $\mathbb{Y}$ and $\mathbb{X}^*,$ respectively.
			\end{itemize}
		\end{theorem}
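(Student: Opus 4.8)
The plan is to exploit the characterization of $\rho_{+}$-, $\rho_{-}$- and $\rho$-orthogonality of compact operators furnished by the preceding proposition, together with Lemma \ref{functional}, so as to reduce each of the six assertions to a statement purely about the points $y\in\mathbb{Y}$ and $f\in\mathbb{X}^{*}$. Since all three orthogonality relations are homogeneous, I would first normalize so that $\|f\|=\|y\|=1$, whence $\|T\|=1$. Reflexivity of $\mathbb{X}$ and $\mathbb{Y}$ makes the faces $\{x\in B_{\mathbb{X}}:f(x)=1\}$ and $J(y)=\{y^{*}\in B_{\mathbb{Y}^{*}}:y^{*}(y)=1\}$ nonempty compact convex sets (in the weak, resp.\ weak$^{*}$, topology), so by the Krein--Milman theorem each contains an extreme point of $B_{\mathbb{X}}$, resp.\ $B_{\mathbb{Y}^{*}}$; this is what guarantees that the index sets appearing below are nonempty.

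The computational heart is a single reduction, to be carried out once. For $w\in S_{\mathbb{Y}}$ put $S_{w}(x)=f(x)w$, and for $g\in S_{\mathbb{X}^{*}}$ put $S^{g}(x)=g(x)y$; both are rank-one operators of norm $1$. Writing $\Omega(A,B)$ for the set $\{y^{*}(Bx):x\in Ext(B_{\mathbb{X}}),\,y^{*}\in Ext(B_{\mathbb{Y}^{*}}),\,y^{*}(Ax)=\|A\|\}$ of the preceding proposition, I would show, using that a product of two numbers in $[-1,1]$ equals $1$ only when both factors equal $\pm1$ with the same sign (and that $-y^{*}$ is extreme whenever $y^{*}$ is), that
\begin{align*}
\Omega(T,S_{w}) &= \{z^{*}(w):z^{*}\in Ext(J(y))\}, & \Omega(S_{w},T) &= \{z^{*}(y):z^{*}\in Ext(J(w))\},\\
\Omega(T,S^{g}) &= \{g(x):x\in Ext(J(f))\}, & \Omega(S^{g},T) &= \{f(x):x\in Ext(J(g))\},
\end{align*}
where in the second line I use reflexivity of $\mathbb{X}$ to identify $J(f)\subset S_{\mathbb{X}^{**}}$ with $\{x\in S_{\mathbb{X}}:f(x)=1\}$. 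By Lemma \ref{functional} the suprema and infima of these four sets are precisely $\rho'_{\pm}(y,w)$, $\rho'_{\pm}(w,y)$, $\rho'_{\pm}(f,g)$ and $\rho'_{\pm}(g,f)$. Hence, for each of $\perp_{\rho_{+}},\perp_{\rho_{-}},\perp_{\rho}$ (governed respectively by $\sup\Omega$, $\inf\Omega$, and $\sup\Omega+\inf\Omega$), one gets the clean equivalences $T\perp S_{w}\iff y\perp w$, $S_{w}\perp T\iff w\perp y$, $T\perp S^{g}\iff f\perp g$ and $S^{g}\perp T\iff g\perp f$.

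With these equivalences every item follows by contraposition, the $\rho_{+}$, $\rho_{-}$ and $\rho$ cases being formally identical. For the left-symmetric statements (i), (iii), (v): were $y$ not left symmetric, there would be $w$ with $y\perp w$ but $w\not\perp y$, giving $T\perp S_{w}$ yet $S_{w}\not\perp T$, contradicting left symmetry of $T$; the operator $S^{g}$ disposes of $f$ in the same way. For the right-symmetric statements (ii), (iv), (vi): were $y$ not right symmetric, there would be $w$ with $w\perp y$ but $y\not\perp w$, giving $S_{w}\perp T$ yet $T\not\perp S_{w}$, again contradicting right symmetry of $T$, and $S^{g}$ handles $f$.

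The only genuinely delicate step is the reduction identity for the $\Omega$-sets: one must verify that the norming constraint $y^{*}(Tx)=\|T\|$ forces $f(x),y^{*}(y)\in\{\pm1\}$ with equal sign, fold the two sign cases together via $x\mapsto-x$ and $y^{*}\mapsto-y^{*}$, and confirm nonemptiness of the index sets through reflexivity. Once this bookkeeping is settled, all six conclusions drop out with no further computation.
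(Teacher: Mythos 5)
Your proof is correct, and it follows the paper's strategy for the vector $y$ almost verbatim: the paper also perturbs $T$ by the rank-one operator $S(x)=f(x)v$ (your $S_{w}$), computes the set $\Omega$ of the preceding proposition to get $\rho'_{+}(T,S)=\rho'_{+}(y,v)$, and derives the contradiction; like you, it needs the reverse identity $\Omega(S,T)=\{z^{*}(y):z^{*}\in Ext(J(v))\}$, though it dismisses this as ``immediate'' while you compute it explicitly. Where you genuinely diverge is in the treatment of the functional $f$. The paper passes to the adjoint: it notes $T^{*}(y^{*})=y^{*}(y)f$ is rank one in $\mathbb{K}(\mathbb{Y}^{*},\mathbb{X}^{*})$, asserts that reflexivity makes $T\mapsto T^{*}$ transfer $(\rho_{\pm})$-left (right) symmetry to $T^{*}$, and then reruns the first argument against the operator $A(y^{*})=y^{*}(y)g$. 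You instead stay inside $\mathbb{K}(\mathbb{X},\mathbb{Y})$ and perturb the functional slot directly with $S^{g}(x)=g(x)y$, using reflexivity of $\mathbb{X}$ to identify $Ext(J(f))$ with $\{x\in Ext(B_{\mathbb{X}}):f(x)=1\}$ and to guarantee, via weak compactness and Krein--Milman, that this set is nonempty. Your route is more uniform (one $\Omega$-computation template handles all four orthogonality relations and all six items) and avoids the unproved, though standard, facts the paper leans on --- that under reflexivity the adjoint map is a surjective isometry onto $\mathbb{K}(\mathbb{Y}^{*},\mathbb{X}^{*})$ and that left/right symmetry is preserved by it; the paper's route buys brevity, since the second half of its proof is a literal repetition of the first in the dual setting. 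Your explicit attention to the sign-folding ($f(x)=y^{*}(y)=\pm1$ with equal signs) and to nonemptiness of the norming index sets is exactly the bookkeeping the paper glosses over, and it is where reflexivity is actually used.
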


		\begin{proof}
		 (i)   Suppose on the contrary that $y$ is not $(\rho_+)$-left symmetric. Then there exists $v \in \mathbb{Y}$ such that $y \perp_{\rho_+} v$ but $v \not\perp_{\rho_{+}} y.$ Define $S : \mathbb{X} \to \mathbb{Y}$ such that $S(x)= f(x) v,$ for any $x \in \mathbb{X}.$ It is straightforward to verify that $M_T= M_S$ and for any $z \in M_T(= M_S),$ $Tz= \pm y, Sz= \pm v.$ Let 
		 \[
		 	\Omega = \bigg\{ y^*(Sx): x \in  Ext(B_{\mathbb{X}}), y^* \in Ext(B_{\mathbb{Y}^*}), y^*(Tx)=\|T\| \bigg\}.
		 \]
		 It is easy to observe that whenever $y^*(Tx)=\|T\|,$ then $x \in M_T$ and $Sx= v.$ So, 
		 \[
		 	\Omega = \{ y^*(v):  y^* \in Ext( B_{\mathbb{Y}^*}), y^*(y)=\|y\| \}= \{ y^*(v): y^* \in Ext (J(y)) \}.
		 \]
		Therefore, $\rho_{+}'(T, S)= \rho_{+}'(y,v)$. 
		Since $y \perp_{\rho_+} v$ and $v \not\perp_{\rho_{+}} y,$ it is immediate that $T \perp_{\rho_{+}} S$ and $S \not\perp_{\rho_{+}} T.$ This contradicts that $T$ is $(\rho_{+})$-left symmetric. So $y$ is $(\rho_{+})$-left symmetric. 
		It is easy to check that $T^*(y^*)=  \psi(y)(y^*) f,$ for any $y^* \in \mathbb{Y}^*,$ where $\psi$ is the canonical isometric isomorphism from $\mathbb{X}$ to $\mathbb{X}^{**}.$ Clearly, $T^*$ is rank $1$. Moreover, as $\mathbb{X}, \mathbb{Y}$ are reflexive,  $T$ is $(\rho_{+})$-left symmetric implies that $T^*$ is also $(\rho_{+})$-left symmetric. Now, suppose on the contrary that $f$ is not $(\rho_{+})$-left symmetric. Then there exists $g \in \mathbb{X}^*$ such that $f \perp_{\rho_{+}} g$ but $g \not \perp_{\rho_{+}} f.$ Define $A: \mathbb{Y}^* \to \mathbb{X}^*$ such that $A(y^*)= \psi(y)(y^*) g.$ Clearly, $M_{T^*}= M_A.$ Proceeding similarly as above, we obtain that $T^* \perp_{\rho_{+}} A$ but $A \not\perp_{\rho_{+}} T^*,$ which contradicts that $T^*$ is left symmetric. This completes the proof of (i). 
		
	Proofs of	(ii), (iii), (iv), (v), (vi) follows similarly. 
		\end{proof}

		\begin{prop}\label{prop:isometric}
			\begin{itemize}
				\item[(i)] The space  $\mathbb{L}(\ell_{1}^n, \mathbb{X})$ is isometrically isomorphic to $\ell_{\infty}^n(\mathbb{X}),$ for any Banach space $\mathbb{X}.$ 
				
				\item[(ii)]  The space  $\mathbb{K}( \mathbb{X}, C(K))$ is isometrically isomorphic to $C(K, \mathbb{X}^*),$ where $K$ is a compact Hausdorff space.
				
				\item[(iii)]  The space $\mathbb{L}( \mathbb{X}, \ell_{\infty}^n)$ is isometrically isomorphic to $\ell_{\infty}^n(\mathbb{X}^*),$
				for any Banach space $\mathbb{X}.$ 			
			\end{itemize}
		\end{prop}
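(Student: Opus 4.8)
The plan is to exhibit, in each case, an explicit natural correspondence between operators and vector-valued functions, and to verify directly that it is linear, bijective and norm-preserving. Parts (i) and (iii) reduce to a one-line norm computation, whereas part (ii) carries the real content: matching the \emph{compactness} of the operator with the \emph{norm-continuity} of the associated function via an Arzel\`a--Ascoli argument.

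For (i), I would send $T \in \mathbb{L}(\ell_1^n, \mathbb{X})$ to the tuple $(Te_1, \ldots, Te_n) \in \ell_{\infty}^n(\mathbb{X})$, where $\{e_i\}$ is the standard basis of $\ell_1^n$. This map is linear and bijective, since any tuple $(x_1, \ldots, x_n)$ determines the operator $a \mapsto \sum_i a_i x_i$. Because $B_{\ell_1^n} = co(\{\pm e_i : 1 \le i \le n\})$, the supremum defining $\|T\|$ is attained on the extreme points, giving $\|T\| = \sup_{\|a\|_1 \le 1}\big\|\sum_i a_i Te_i\big\| = \max_i \|Te_i\|$, which is exactly the $\ell_{\infty}^n(\mathbb{X})$-norm of the tuple. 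Part (iii) is dual in spirit: I would send $T \in \mathbb{L}(\mathbb{X}, \ell_{\infty}^n)$ to $(f_1, \ldots, f_n) \in \ell_{\infty}^n(\mathbb{X}^*)$, where $f_i = \pi_i \circ T \in \mathbb{X}^*$ is the composition of $T$ with the $i$-th coordinate functional on $\ell_\infty^n$. From the pointwise estimate $\max_i |f_i(x)| \le (\max_i \|f_i\|)\,\|x\|$ one obtains $\|T\| = \max_i \|f_i\|$, the $\ell_{\infty}^n(\mathbb{X}^*)$-norm, and linearity and bijectivity are immediate.

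For (ii), given $T \in \mathbb{K}(\mathbb{X}, C(K))$ I would define, for each $k \in K$, the functional $\Phi(k) \in \mathbb{X}^*$ by $\Phi(k)(x) = (Tx)(k)$. For fixed $x$ the map $k \mapsto \Phi(k)(x) = (Tx)(k)$ is continuous, so $\Phi$ is at least weak*-continuous, and the norm is matched by interchanging suprema: $\|T\| = \sup_{x \in B_{\mathbb{X}}} \sup_{k \in K} |(Tx)(k)| = \sup_{k \in K} \|\Phi(k)\| = \|\Phi\|_{C(K, \mathbb{X}^*)}$. The crux is to show that $\Phi$ actually lands in $C(K, \mathbb{X}^*)$, i.e.\ is \emph{norm}-continuous, precisely when $T$ is compact. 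If $T$ is compact then $T(B_{\mathbb{X}})$ is relatively compact in $C(K)$, hence equicontinuous by Arzel\`a--Ascoli; equicontinuity at $k_0$ says exactly that $\sup_{x \in B_{\mathbb{X}}} |(Tx)(k) - (Tx)(k_0)| = \|\Phi(k) - \Phi(k_0)\|$ is small for $k$ near $k_0$, which is norm-continuity of $\Phi$. Conversely, starting from a norm-continuous $\Phi$, I would define $T$ by $(Tx)(k) = \Phi(k)(x)$; continuity of $k \mapsto \Phi(k)(x)$ gives $Tx \in C(K)$, while the same equicontinuity estimate together with Arzel\`a--Ascoli shows $T(B_{\mathbb{X}})$ is relatively compact, so $T$ is compact. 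Injectivity and surjectivity of the correspondence then follow once this equivalence is established.

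The main obstacle is precisely this equivalence between compactness of $T$ and norm-continuity of $\Phi$ in (ii): one must translate relative compactness of $T(B_{\mathbb{X}})$ in $C(K)$ into equicontinuity through Arzel\`a--Ascoli, and then recognize equicontinuity as the statement that the operator norms $\|\Phi(k) - \Phi(k_0)\|$ of the differences vanish as $k \to k_0$. By contrast, parts (i) and (iii) are entirely routine once the correct correspondence is written down, amounting to no more than a computation of the operator norm on the extreme points of the relevant unit ball.
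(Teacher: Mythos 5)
Your proof is correct, and the correspondences you construct are exactly the ones the paper itself relies on: the paper states this proposition without proof (treating it as classical, citing Dunford--Schwartz when it uses part (ii)), and then works with precisely your maps $T\mapsto (Te_1,\dots,Te_n)$ for (i), $\Phi(k)=T^*(\delta_k)$, i.e.\ $\Phi(k)(x)=(Tx)(k)$, for (ii), and $T\mapsto (\pi_1\circ T,\dots,\pi_n\circ T)=(T^*e_1,\dots,T^*e_n)$ for (iii) in the subsequent corollaries. Your Arzel\`a--Ascoli argument correctly supplies the one genuinely nontrivial point --- the equivalence between compactness of $T$ and norm-continuity of $k\mapsto \Phi(k)$ --- so the proposal fully substantiates what the paper leaves to the literature.
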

		
		For a Banach space $\mathbb{X}$ with $Ext(B_{\mathbb{X}}) \neq \emptyset,$  let $e(B_{\mathbb{X}})= \{ x, y \in Ext(B_{\mathbb{X}}): x \neq \pm y\}.$ 
		
	     \begin{theorem}\label{nice:left}
				Let  $T \in S_{\mathbb{L}(\mathbb{X}, \mathbb{Y})}.$
				 Then the following hold: 
				 
				 \begin{itemize}
				 	\item[(i)]  $T$ is $(\rho_{+})$-left symmetric if  there exists $y_0^* \in Ext(B_{\mathbb{Y}^*})$ such that $T^*(y_0^*) $ is a $(\rho_+)$-left symmetric point and $T^*(y^*)=0, \forall y^* \in Ext(B_{\mathbb{Y}^*}) \setminus \{\pm y_0^*\}.$
				 	
				 	\item[(ii)]  $T$ is $(\rho_{-})$-left symmetric if  there exists $y_0^* \in Ext(B_{\mathbb{Y}^*})$ such that $T^*(y_0^*) $ is a $(\rho_-)$-left symmetric point and $T^*(y^*)=0, \forall y^* \in Ext(B_{\mathbb{Y}^*}) \setminus \{\pm y_0^*\}.$
				 \end{itemize}
		\end{theorem}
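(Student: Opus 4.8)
The plan is to prove (i), since (ii) follows by the analogous argument with infima in place of suprema. My strategy is to reduce the operator-orthogonality condition to a functional-orthogonality condition on $\mathbb{X}^*$, exploiting the description of the extreme points of $B_{\mathbb{K}(\mathbb{X},\mathbb{Y})^*}$ and the hypothesis that $T^*$ kills every extreme functional of $B_{\mathbb{Y}^*}$ except $\pm y_0^*$. Concretely, I first assume $T \perp_{\rho_+} S$ for an arbitrary $S \in \mathbb{L}(\mathbb{X},\mathbb{Y})$ and aim to show $S \perp_{\rho_+} T$; by the characterization of $(\rho_+)$-orthogonality via $\rho_+'$, this amounts to showing $\rho_+'(S,T)=0$, i.e. $\sup\{\,y^*(Tx): x\in\mathrm{Ext}(B_{\mathbb{X}}),\,y^*\in\mathrm{Ext}(B_{\mathbb{Y}^*}),\,y^*(Sx)=\|S\|\,\}=0$.

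First I would unwind the hypothesis $T\perp_{\rho_+}S$ using the extreme-point description: the relevant supremum $\rho_+'(T,S)=\sup\{\,y^*(Sx): x\in\mathrm{Ext}(B_{\mathbb{X}}),\, y^*\in\mathrm{Ext}(B_{\mathbb{Y}^*}),\, y^*(Tx)=\|T\|\,\}$ equals $0$. The key observation is that the norm-attainment pairs $(x,y^*)$ that enter this supremum are constrained: since $\|T\|=1$ and $T^*(y^*)=(T^*y^*)$ acts by $y^*(Tx)=(T^*y^*)(x)$, the condition $y^*(Tx)=1$ forces $T^*y^*$ to attain its norm, and because $T^*y^*=0$ for every extreme $y^*\neq\pm y_0^*$, only $y^*=\pm y_0^*$ can contribute. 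Thus the orthogonality relation $T\perp_{\rho_+}S$ collapses to a statement purely about the functional $T^*(y_0^*)$ on $\mathbb{X}$: writing $g:=T^*(y_0^*)\in\mathbb{X}^*$ and $h:=(\text{the corresponding } S\text{-data})$, I expect $T\perp_{\rho_+}S$ to translate into $g\perp_{\rho_+}h$ for an appropriate $h$ built from $S$ via the same extreme functionals. I would then invoke the hypothesis that $g=T^*(y_0^*)$ is $(\rho_+)$-left symmetric to deduce $h\perp_{\rho_+}g$, and finally translate this back up to $S\perp_{\rho_+}T$.

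The main obstacle I anticipate is the bookkeeping in passing between the operator level and the functional level, specifically handling the degeneracy and sign ambiguity $\pm y_0^*$, and correctly identifying which functional $h$ on $\mathbb{X}$ corresponds to $S$ so that the reduction is exact rather than merely an inequality. One must be careful that the supremum defining $\rho_+'(S,T)$ ranges over norm-attainment pairs for $S$, which need not be controlled by $y_0^*$ alone; here the hypothesis that $T^*$ annihilates all other extreme functionals is what guarantees that only the $y_0^*$-component of any admissible pair can produce a nonzero value of $y^*(Tx)$, forcing the relevant terms to vanish. I would therefore take care to argue that for any admissible pair $(x,y^*)$ in the supremum for $\rho_+'(S,T)$, either $y^*\in\{\pm y_0^*\}$, in which case the value $y^*(Tx)=\pm g(x)$ is controlled by the left symmetry of $g$, or $y^*\notin\{\pm y_0^*\}$, in which case $y^*(Tx)=(T^*y^*)(x)=0$ directly. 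Combining these two cases should yield $\sup\le 0$; the reverse inequality $\sup\ge 0$ is automatic since $0$ lies in the relevant set (taking the zero contribution or using homogeneity), giving $\rho_+'(S,T)=0$ as required.
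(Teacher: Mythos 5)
Your reduction hinges on the identity $\rho'_+(T,S)=\sup\{\,y^*(Sx): x\in Ext(B_{\mathbb{X}}),\ y^*\in Ext(B_{\mathbb{Y}^*}),\ y^*(Tx)=\|T\|\,\}$ (and its analogue with the roles of $S$ and $T$ exchanged). That identity is not available in the generality of this theorem. In the paper it appears only in the proposition opening the operator section, where it requires $\mathbb{X}$ reflexive and $T,S\in\mathbb{K}(\mathbb{X},\mathbb{Y})$, because it rests on the Lima--Olsen description $Ext(B_{\mathbb{K}(\mathbb{X},\mathbb{Y})^*})=\{y^*\otimes x : x\in Ext(B_{\mathbb{X}}),\ y^*\in Ext(B_{\mathbb{Y}^*})\}$ together with norm attainment of compact operators on reflexive spaces. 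Theorem \ref{nice:left}, however, is stated for arbitrary $T\in S_{\mathbb{L}(\mathbb{X},\mathbb{Y})}$ with no hypotheses on $\mathbb{X}$ or $\mathbb{Y}$: there $Ext(B_{\mathbb{X}})$ may be empty (take $\mathbb{X}=c_0$), operators need not attain their norms, and the dual ball of $\mathbb{L}(\mathbb{X},\mathbb{Y})$ admits no such tensor description of its extreme points. In particular your sets of ``admissible pairs'' can be empty, so both your criterion for $T\perp_{\rho_+}S$ and your closing remark that ``$\sup\ge 0$ is automatic since $0$ lies in the relevant set'' break down. What you have outlined is essentially a proof for compact operators between reflexive spaces, i.e., a strictly weaker statement than the one claimed.

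The paper's proof is organized differently precisely to avoid this. It defines $\phi:\mathbb{L}(\mathbb{X},\mathbb{Y})\to\ell_\infty(e(B_{\mathbb{Y}^*}),\mathbb{X}^*)$ by $\phi(T)=f_T$, where $f_T(y^*)=T^*(y^*)$; this is a linear isometry because $\|T\|=\sup\{\|T^*y^*\|: y^*\in Ext(B_{\mathbb{Y}^*})\}$, and extreme points of $B_{\mathbb{Y}^*}$ always exist by weak*-compactness and the Krein--Milman theorem --- no assumption on $\mathbb{X}$ or $\mathbb{Y}$ is needed. The hypothesis on $T$ says exactly that $f_T$ is supported at the single point $y_0^*$, with $(\rho_+)$-left symmetric value there, so Theorem \ref{left:bounded} (whose proof, via Theorem \ref{bounded:characterization}, works with limits of sequences and extreme points of $B_{\mathbb{X}^{**}}$, hence copes with non-attainment) yields that $f_T$ is $(\rho_+)$-left symmetric in all of $\ell_\infty(e(B_{\mathbb{Y}^*}),\mathbb{X}^*)$. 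Since $\rho'_\pm$ depend only on the norms $\|f+tg\|$, the relation $\perp_{\rho_+}$ is intrinsic to any subspace, so left symmetry in the superspace passes to the subspace $\phi(\mathbb{L}(\mathbb{X},\mathbb{Y}))$ and then, through the isometry, to $T$ itself. This superspace embedding is the idea your proposal is missing: one must replace the unavailable extreme-point formula for $\rho'_+$ on $\mathbb{L}(\mathbb{X},\mathbb{Y})$ by the corresponding formula in an $\ell_\infty$-type space into which the operator space embeds isometrically.
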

		
		\begin{proof}
(i)
			We define $ \phi: \mathbb{L}(\mathbb{X},\mathbb{Y})  \longrightarrow   \ell_{\infty}(e(B_{\mathbb{Y}^*}), \mathbb{X}^*) $  as  
			$$ 	\phi(T)\quad  =  \quad f_T \quad   \forall  T \in \mathbb{L}(\mathbb{X}, \mathbb{Y}) , $$
			where $ f_T:  e(B_{\mathbb{Y}^*}) \to \mathbb{X}^* $ is given by 
			$$ f_T( y^*)=  T^*( y^*), \, \, \forall y^* \in e(B_{\mathbb{Y}^*}).$$
		It is easy to show that $\phi$ is a linear isometry. So, $\mathbb{L}(\mathbb{X}, \mathbb{Y})$ is embedded into $\ell_{\infty}(e(B_{\mathbb{Y}^*}), \mathbb{X}^*).$
			As 	there exists $y_0^* \in e(B_{\mathbb{Y}^*})$ such that $f_T(y_0^*) $ is a $(\rho_{+})$-left symmetric point and $f_T(y^*)=0, \forall y^* \in e(B_{\mathbb{Y}^*}) \setminus \{ y_0^*\}, $ it follows from Theorem \ref{left:bounded} that $ f_T$ is left symmetric in $\ell_{\infty}(e(B_{\mathbb{Y}^*}), \mathbb{X}^*).$ Clearly, $f_T$ is also  left symmetric  in $\phi(\mathbb{L}(\mathbb{X}, \mathbb{Y})).$ Since  left symmetricity is preserved under isometric isomorphism, $T$ 
			is left symmetric in $\mathbb{L}(\mathbb{X}, \mathbb{Y}).$
			
			(ii) follows similarly.	
		\end{proof}

		\begin{remark}\label{prop:embedd}
	Observe that	  $\mathbb{K}(\mathbb{X}, \mathbb{Y})$ is  embedded into $C(e(B_{\mathbb{Y}^*}), \mathbb{X}^*).$ The isometric embedding $\phi : \mathbb{K}(\mathbb{X}, \mathbb{Y}) \to C(e(B_{\mathbb{Y}^*}), \mathbb{X}^*)$ is defined as $\phi(T), = f_T$ where $f_T (y^*)= T^*(y^*), ~ \forall y^* \in (e(B_{\mathbb{Y}^*})).$
		\end{remark}

		We next present a sufficient condition for $(\rho_{+})$-right symmetric and $(\rho_-)$-right symmetric in $\mathbb{K}(\mathbb{X}, \mathbb{Y}).$ The proof is omitted as it is in the same line as Theorem \ref{nice:left}, considering $\mathbb{K}(\mathbb{X}, \mathbb{Y})$ as a subspace $C(e(B_{\mathbb{Y}^*}), \mathbb{X}^*).$
		
		    \begin{prop}\label{nice:right}
			Let $T \in S_{\mathbb{K}(\mathbb{X}, \mathbb{Y})}.$ Then the following hold: 
			
			\begin{itemize}
				\item[(i)] Suppose  $T$ satisfies one of the following:
			\begin{itemize}
					\item[(a)] If $M_{T^*}=\{y_0^*\}$ then $T^*(y_0^*)$ is $(\rho_{+})$-right symmetric and for any $y^* \in Ext(B_{\mathbb{X}^*}) \setminus \{\pm y_0^*\},$  $x^* \perp_{\rho_+} T^*(y^*) \implies x^*=0.$
				
				\item[(b)] If $M_{T^*}$ is not singleton and for any $y^* \in M_{T^*},$  $x^* \perp_{\rho_+} T^*(y^*) \implies x^*=0.$
			\end{itemize}
				Then 	 $T$ is $(\rho_{+})$-right symmetric.\\
				
				\item[(ii)]  Suppose  $T$ satisfies one of the following:
				\begin{itemize}
						\item [(a)] If $M_{T^*}=\{y_0^*\}$ then $T^*(y_0^*)$ is $(\rho_{-})$-right symmetric and for any $y^* \in Ext(B_{\mathbb{X}^*}) \setminus \{\pm y_0^*\},$  $x^* \perp_{\rho_-} T^*(y^*) \implies x^*=0.$
					
					\item [(b)] If $M_{T^*}$ is not singleton and for any $y^* \in M_{T^*},$  $x^* \perp_{\rho_-} T^*(y^*) \implies x^*=0.$
				\end{itemize}
			Then 	 $T$ is $(\rho_{-})$-right symmetric.
			\end{itemize}
	
		\end{prop}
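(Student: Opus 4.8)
The plan is to transport the statement into the vector-valued function space $C(e(B_{\mathbb{Y}^*}),\mathbb{X}^*)$, where the right-symmetric case is already settled by Theorem \ref{right:continuous} and Theorem \ref{right2:continuous}, and then pull the conclusion back through an isometric embedding, exactly as in the left-symmetric counterpart Theorem \ref{nice:left}. I would start from the embedding recorded in Remark \ref{prop:embedd}, namely $\phi:\mathbb{K}(\mathbb{X},\mathbb{Y})\to C(e(B_{\mathbb{Y}^*}),\mathbb{X}^*)$ with $\phi(T)=f_T$ and $f_T(y^*)=T^*(y^*)$, and take $K=e(B_{\mathbb{Y}^*})$ as the underlying compact Hausdorff space. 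Since $\phi$ is a linear isometry onto its image, $\|f_T(y^*)\|=\|T^*y^*\|$ for each $y^*$, whence $\|f_T\|=\|T^*\|=\|T\|=1$ and the norm-attainment set $M_{f_T}$ is precisely the representative set of $M_{T^*}$ inside $e(B_{\mathbb{Y}^*})$. Under this identification the two hypotheses on $T$ become the two alternatives of Theorem \ref{right:continuous} for the function $f_T$: condition (a) is alternative (i) (a singleton norm-attainment set whose generator $T^*(y_0^*)$ is $(\rho_+)$-right symmetric, with no nonzero vector $(\rho_+)$-orthogonal to $f_T(y^*)$ away from it), and condition (b) is alternative (ii).

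The step that licenses the descent to the subspace is that $(\rho_+)$-orthogonality is determined by the norm alone: as $\phi$ is isometric, $\rho'_+(f_S,f_T)=\rho'_+(S,T)$ for $S,T\in\mathbb{K}(\mathbb{X},\mathbb{Y})$, so $S\perp_{\rho_+}T$ if and only if $f_S\perp_{\rho_+}f_T$. Moreover, right-symmetricity passes from the ambient space to any subspace containing the point, since the $(\rho_+)$-orthogonality of two elements of the subspace is the same relation whether read in the subspace or in $C(e(B_{\mathbb{Y}^*}),\mathbb{X}^*)$. Hence it suffices to establish that $f_T$ is $(\rho_+)$-right symmetric in the whole of $C(e(B_{\mathbb{Y}^*}),\mathbb{X}^*)$, and the sufficiency direction of Theorem \ref{right:continuous} applies. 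Part (ii) is the identical argument with $\rho_+$ replaced by $\rho_-$ throughout and Theorem \ref{right:continuous} replaced by Theorem \ref{right2:continuous}.

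In practice I would run the sufficiency argument directly rather than quote full symmetricity of $f_T$, both to keep the hypotheses minimal and to make the logic transparent, noting that the sufficiency half of Theorem \ref{right:continuous} invokes only Lemma \ref{lemma:1} and Lemma \ref{lemma:singleton}, which require $e(B_{\mathbb{Y}^*})$ to be merely compact Hausdorff (perfect normality enters only in the necessity half, through Urysohn's lemma). Thus, given $S\perp_{\rho_+}T$, Lemma \ref{lemma:1} produces $y_1^*\in M_{f_S}$ with $S^*(y_1^*)=f_S(y_1^*)\perp_{\rho_+}f_T(y_1^*)=T^*(y_1^*)$. Under (b), or under (a) at a point $y_1^*\notin M_{f_T}$, the orthogonality-kills-everything hypothesis forces $S^*(y_1^*)=0$, and since $y_1^*\in M_{f_S}$ this yields $S=0$ and trivially $T\perp_{\rho_+}S$; in the remaining case of (a), $y_1^*$ must coincide with the generator $y_0^*$ of $M_{f_T}$, and the $(\rho_+)$-right symmetry of $T^*(y_0^*)$ together with Lemma \ref{lemma:singleton} returns $T\perp_{\rho_+}S$.

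The main obstacle is not the case analysis but the legitimacy of the ambient space and the embedding. One must confirm that $e(B_{\mathbb{Y}^*})$ is compact Hausdorff in the weak*-topology and, crucially, that $f_T$ really belongs to $C(e(B_{\mathbb{Y}^*}),\mathbb{X}^*)$; this is exactly where compactness of $T$ is indispensable, since $T$ compact makes $T^*$ compact and hence $y^*\mapsto T^*(y^*)$ weak*-to-norm continuous on $B_{\mathbb{Y}^*}$. A second delicate point is the antipodal identification built into $e(B_{\mathbb{Y}^*})$: because $T^*(-y^*)=-T^*(y^*)$, the norm-attainment set $M_{T^*}$ occurs in $\pm$ pairs, so the clauses ``$M_{T^*}=\{y_0^*\}$'' and ``for $y^*\notin\{\pm y_0^*\}$'' must be read at the level of $e(B_{\mathbb{Y}^*})$, where $M_{f_T}$ is a genuine singleton and the two alternatives of Theorem \ref{right:continuous} become exhaustive and mutually exclusive. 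Once these identifications are pinned down and the orthogonality-kill quantifier in (b) is read over all of $e(B_{\mathbb{Y}^*})$, the proof is a faithful transcription of the proof of Theorem \ref{right:continuous}.
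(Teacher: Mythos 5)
Your proposal is correct and takes essentially the same approach as the paper, which omits the proof entirely with the remark that it ``is in the same line as Theorem \ref{nice:left}, considering $\mathbb{K}(\mathbb{X}, \mathbb{Y})$ as a subspace $C(e(B_{\mathbb{Y}^*}), \mathbb{X}^*)$'' --- precisely your embedding $\phi(T)=f_T$ followed by the sufficiency halves of Theorems \ref{right:continuous} and \ref{right2:continuous} (equivalently, a direct run of Lemmas \ref{lemma:1} and \ref{lemma:singleton}). The caveats you flag --- weak*-to-norm continuity of $T^*$ coming from compactness of $T$, the antipodal reading of $M_{T^*}=\{\pm y_0^*\}$ inside $e(B_{\mathbb{Y}^*})$, and the need to read the orthogonality-kill quantifier in (b) over all of $e(B_{\mathbb{Y}^*})$ rather than only over $M_{T^*}$ --- are genuine imprecisions in the paper's statement that your reconstruction identifies and handles correctly.
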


		As a consequence of Corollary \ref{directsum}  we obtain the $(\rho_{+})$-left (right) symmetric and $(\rho_{-})$-left (right) symmetric operators in $\mathbb{L}(\ell_{1}^n, \mathbb{X})$.
		
		\begin{cor}\label{cor1}
			Let $T \in S_{\mathbb{L}(\ell_{1}^n , \mathbb{X})}.$ Then  
			\begin{itemize}
				\item[(i)] $T$  is a $(\rho_{+})$-left symmetric operator if and only if there exists $i \in \{1, 2, \ldots, n\}$ such that $\|T(e_i)\|=1, T(e_j)=0, \forall j \neq i$ and $T(e_i)$ is a $(\rho_{+})$-left symmetric point of $\mathbb{X}.$
				
				\item[(ii)] $T$  is a $(\rho_{-})$-left symmetric operator if and only if there exists $i \in \{1, 2, \ldots, n\}$ such that $\|T(e_i)\|=1, T(e_j)=0, \forall j \neq i$ and $T(e_i)$ is a $(\rho_{-})$-left symmetric point of $\mathbb{X}.$
			\end{itemize}
		\end{cor}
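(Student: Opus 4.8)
The plan is to deduce the statement directly from the characterization of $(\rho_+)$-left symmetric and $(\rho_-)$-left symmetric points of $\ell_{\infty}^n(\mathbb{X})$ obtained in Corollary \ref{directsum}, transported across the isometric isomorphism recorded in Proposition \ref{prop:isometric}(i). The whole argument is a transfer principle: identify the isomorphism concretely, check that the relevant symmetricity is preserved by it, and read off the answer.

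First I would make the isomorphism explicit. Let $\{e_1, \ldots, e_n\}$ denote the standard basis of $\ell_1^n$ and define $\Phi : \mathbb{L}(\ell_1^n, \mathbb{X}) \to \ell_{\infty}^n(\mathbb{X})$ by $\Phi(T) = (T(e_1), \ldots, T(e_n))$. Since every $z = \sum_{i} \lambda_i e_i \in \ell_1^n$ satisfies $\|z\| = \sum_{i} |\lambda_i|$, one has $\|Tz\| \leq \big(\max_i \|T(e_i)\|\big)\sum_i |\lambda_i|$ with the bound attained along the basis vectors, so $\|T\| = \max_i \|T(e_i)\| = \|\Phi(T)\|_{\infty}$. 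Thus $\Phi$ is a surjective linear isometry, which is precisely the content of Proposition \ref{prop:isometric}(i).

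Next I would observe that $(\rho_+)$-left symmetricity, and likewise $(\rho_-)$-left symmetricity, is invariant under a surjective linear isometry. Indeed, the norm derivatives $\rho'_+$ and $\rho'_-$ are defined purely in terms of the norm, so $\rho'_+(\Phi(u), \Phi(v)) = \rho'_+(u, v)$ and $\rho'_-(\Phi(u), \Phi(v)) = \rho'_-(u, v)$ for all $u, v \in \mathbb{L}(\ell_1^n, \mathbb{X})$. Consequently $u \perp_{\rho_+} v \iff \Phi(u) \perp_{\rho_+} \Phi(v)$, and similarly for $\rho_-$, whence $T$ is $(\rho_+)$-left symmetric in $\mathbb{L}(\ell_1^n, \mathbb{X})$ if and only if $\Phi(T)$ is $(\rho_+)$-left symmetric in $\ell_{\infty}^n(\mathbb{X})$, with the analogous equivalence for $(\rho_-)$.

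Finally I would apply Corollary \ref{directsum}(i) to $\Phi(T) = (T(e_1), \ldots, T(e_n))$: this tuple is $(\rho_+)$-left symmetric exactly when there is an index $i$ with $\|T(e_i)\| = 1$, $T(e_j) = 0$ for all $j \neq i$, and $T(e_i)$ a $(\rho_+)$-left symmetric point of $\mathbb{X}$, which is statement (i). Statement (ii) follows identically by invoking Corollary \ref{directsum}(iii) in place of (i). The only point demanding genuine care is the isometric invariance of the two orthogonality relations under $\Phi$; once that is secured, the remainder is a mechanical translation, so I anticipate no real obstacle beyond this bookkeeping.
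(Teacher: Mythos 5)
Your proposal is correct and follows essentially the same route as the paper: both transfer the problem across the isometric isomorphism $T \mapsto (Te_1, \ldots, Te_n)$ of Proposition \ref{prop:isometric}(i), invoke the invariance of $(\rho_{\pm})$-left symmetricity under isometric isomorphism, and then read off the conclusion from Corollary \ref{directsum}. The only difference is that you spell out the verifications (the norm identity $\|T\| = \max_i \|Te_i\|$ and the preservation of the norm derivatives $\rho'_{\pm}$ under a surjective linear isometry) which the paper leaves implicit.
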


		\begin{proof}
			We only prove (i) as (ii) follows similarly.	From Proposition \ref{prop:isometric}(i), $\mathbb{L}(\ell_{1}^n, \mathbb{X})$ is isometrically isomorphic to $\ell_{\infty}^n(\mathbb{X}).$ Moreover, $\phi: \mathbb{L}(\ell_{1}^n, \mathbb{X}) \to \ell_{\infty}^n(\mathbb{X})$ defined as  $\phi(T)= (Te_1, Te_2, \ldots, Te_n)$ is the isometric isomorphism. Since  $(\rho_{\pm})$-left symmetricity is preserved under isometric isomorphism, the desired result follows easily from Corollary \ref{directsum}.
		\end{proof}
		
		The next proof is omitted as it is in the same line of Corollary \ref{cor1}.
		
		\begin{cor}
			Let $T \in S_{\mathbb{L}(\ell_{1}^n , \mathbb{X})}.$ Then 
			 
			\begin{itemize}
				\item[(i)] $T$	 is a $(\rho_{+})$-right symmetric if and only if and $T$ satisfies exactly one of following:
				\begin{itemize}
						\item [(a)] If $M_T= \{ \pm e_i\}, $ for some $ 1\leq i \leq n,$ then $T(e_i)$ is a $(\rho_{+})$-right symmetric point in $\mathbb{X}$ and  for any $k \in \{1, 2, \ldots, n\} \setminus \{i\},$   $w \perp_{\rho_+} T(e_k) \implies w = 0.$
					
					\item [(b)] If $e_i, e_j \in M_T$ for some $i, j \in \{1, 2, \ldots, n\}$ then for any $k \in \{1, 2, \ldots, n\},$  $w \perp_{\rho_+} T(e_k) \implies w=0.$
				\end{itemize}

				\item[(ii)] $T$	 is a $(\rho_{-})$-right symmetric if and only if and $T$ satisfies exactly one of following:
				\begin{itemize}
						\item[(a)] If $M_T= \{ \pm e_i\}, $ for some $ 1\leq i \leq n,$ then $T(e_i)$ is a $(\rho_{-})$-right symmetric point in $\mathbb{X}$ and  for any $k \in \{1, 2, \ldots, n\} \setminus \{i\},$   $w \perp_{\rho_-} T(e_k) \implies w=0.$
					
					\item [(b)] If $e_i, e_j \in M_T$ for some $i, j \in \{1, 2, \ldots, n\}$ then for any $k \in \{1, 2, \ldots, n\}$ $w \perp_{\rho_-} T(e_k) \implies w=0.$
				\end{itemize}
			
			\end{itemize}
				\end{cor}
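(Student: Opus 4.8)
The plan is to deduce this corollary from the characterization of $(\rho_{\pm})$-right symmetric points of $\ell_{\infty}^n(\mathbb{X})$ recorded in Corollary \ref{directsum}, in exactly the way Corollary \ref{cor1} was deduced from the corresponding left-symmetric statements. I would write out only part (i) in detail, since part (ii) follows by replacing every supremum with an infimum throughout and interchanging the roles of $\rho'_+$ and $\rho'_-$.

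First I would invoke Proposition \ref{prop:isometric}(i): the map $\phi : \mathbb{L}(\ell_1^n, \mathbb{X}) \to \ell_{\infty}^n(\mathbb{X})$ given by $\phi(T) = (Te_1, Te_2, \ldots, Te_n)$ is an onto isometric isomorphism. Because each of the relations $\perp_{\rho_+}$, $\perp_{\rho_-}$ and $\perp_{\rho}$ is defined purely through the norm (via the one-sided norm derivatives), an onto isometry preserves them, and hence preserves $(\rho_+)$-right symmetricity. Thus $T$ is $(\rho_+)$-right symmetric in $\mathbb{L}(\ell_1^n, \mathbb{X})$ if and only if $\widetilde{x} = (Te_1, \ldots, Te_n)$ is $(\rho_+)$-right symmetric in $\ell_{\infty}^n(\mathbb{X})$, so it remains only to transcribe conditions (a) and (b) of Corollary \ref{directsum}(ii) into operator language under the identification $x_k = Te_k$.

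The key bookkeeping step is to identify the norm attainment set $M_T$. Since $Ext(B_{\ell_1^n}) = \{\pm e_1, \ldots, \pm e_n\}$ and $B_{\ell_1^n} = co(Ext(B_{\ell_1^n}))$, one has $\|T\| = \max_{1 \le k \le n} \|Te_k\|$, and the norm is attained at an extreme point $\pm e_k$ precisely when $\|Te_k\| = \|T\| = 1$. Consequently the hypothesis ``$\|x_i\| = 1$ and $\|x_j\| < 1$ for all $j \neq i$'' of Corollary \ref{directsum}(ii)(a) translates into $M_T = \{\pm e_i\}$, whereas the hypothesis ``$\|x_i\| = \|x_j\| = 1$ for some pair $i \neq j$'' translates into $e_i, e_j \in M_T$, giving cases (a) and (b) respectively. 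Finally, since $\perp_{\rho_+}$ is homogeneous, for $w \neq 0$ one has $w \perp_{\rho_+} T(e_k) \iff \tfrac{w}{\|w\|} \perp_{\rho_+} T(e_k)$, so the clause ``there exists no $w \in S_{\mathbb{X}}$ with $w \perp_{\rho_+} x_k$'' is equivalent to ``$w \perp_{\rho_+} T(e_k) \implies w = 0$''. Substituting these three equivalences into Corollary \ref{directsum}(ii) yields exactly the stated description, and the identical substitution into Corollary \ref{directsum}(iv) yields part (ii).

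The main obstacle will be only the clean identification of $M_T$ with the set of extreme points $\pm e_k$ at which $\|Te_k\| = 1$; once that is in place, everything else is a verbatim transcription through the isometry $\phi$. Since this identification rests solely on $B_{\ell_1^n}$ being the convex hull of $\{\pm e_k\}$ together with the homogeneity of the relations, there is no genuine analytic difficulty here, which is precisely why the proof may be safely omitted as being ``in the same line'' as that of Corollary \ref{cor1}.
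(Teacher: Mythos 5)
Your proposal is correct and follows essentially the same route as the paper: the paper omits this proof, stating it is ``in the same line'' as Corollary \ref{cor1}, whose argument is precisely your chain of Proposition \ref{prop:isometric}(i), preservation of $(\rho_{\pm})$-right symmetricity under the isometric isomorphism $\phi(T)=(Te_1,\ldots,Te_n)$, and Corollary \ref{directsum}(ii)/(iv). Your only addition is to spell out the bookkeeping the paper leaves implicit, namely that $M_T=\{\pm e_i\}$ corresponds exactly to $\|Te_i\|=1$, $\|Te_j\|<1$ for $j\neq i$ (via $B_{\ell_1^n}=co\{\pm e_k\}$) and that homogeneity converts the unit-sphere clause into ``$w\perp_{\rho_+}T(e_k)\implies w=0$'', both of which are verified correctly.
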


		Next we present  necessary conditions for  $\rho$-left (right) symmetric operators of $\mathbb{L}(\ell_{1}^n , \mathbb{X})$ as a consequence of Corollary \ref{directsum2}. The proofs are omitted as it follows similarly as Corollary \ref{cor1}.
		
			\begin{cor}
			Let $\mathbb{X}$ be a Banach space. Suppose $T \in S_{\mathbb{L}(\ell_{1}^n , \mathbb{X})}$	 is a $\rho$-left symmetric. Then $T(e_i)=0,$ for any $e_i \notin M_T.$ 
				
			
		\end{cor}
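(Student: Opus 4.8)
The plan is to transport the statement across the isometric isomorphism of Proposition \ref{prop:isometric}(i) and then invoke the coordinate-wise necessary condition already established for $\ell_{\infty}^n(\mathbb{X})$. Concretely, let $\phi : \mathbb{L}(\ell_1^n, \mathbb{X}) \to \ell_{\infty}^n(\mathbb{X})$ be the map $\phi(T) = (Te_1, Te_2, \ldots, Te_n)$, which by Proposition \ref{prop:isometric}(i) is a surjective linear isometry. Since the norm derivatives $\rho'_+, \rho'_-, \rho'$ are defined purely in terms of the norm, they are preserved by any surjective linear isometry; hence $\rho$-orthogonality and, in turn, $\rho$-left symmetricity are invariants of $\phi$, exactly as was used in the proof of Corollary \ref{cor1}. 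Thus if $T \in S_{\mathbb{L}(\ell_1^n,\mathbb{X})}$ is $\rho$-left symmetric, then $\widetilde{x} := \phi(T) = (Te_1, \ldots, Te_n)$ is a $\rho$-left symmetric element of $S_{\ell_{\infty}^n(\mathbb{X})}$.

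Applying Corollary \ref{directsum2}(i) to $\widetilde{x}$, I obtain that $\|Te_k\| < 1$ forces $Te_k = 0$ for each coordinate $k$. It then remains only to rephrase this in terms of $M_T$. For any operator $T : \ell_1^n \to \mathbb{X}$ and any $x = \sum_i x_i e_i \in S_{\ell_1^n}$, the estimate $\|Tx\| = \|\sum_i x_i Te_i\| \le \sum_i |x_i|\,\|Te_i\| \le \max_i \|Te_i\|$ shows $\|T\| = \max_i \|Te_i\|$, and since $\|T\| = 1$ we have $\|Te_i\| \le 1$ for every $i$. Consequently $e_i \in M_T$ precisely when $\|Te_i\| = 1$, so that $e_i \notin M_T$ is equivalent to $\|Te_i\| < 1$. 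Feeding this equivalence into the conclusion of the previous paragraph yields $Te_i = 0$ whenever $e_i \notin M_T$, which is exactly the assertion.

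The only content beyond routine bookkeeping is the identification of $M_T$ with the set of coordinates at which $\|Te_i\| = 1$; this rests on the elementary but essential fact that the operator norm on $\ell_1^n$ is the maximum of the column norms $\|Te_i\|$, together with the fact that norm attainment on $\ell_1^n$ occurs at the extreme points $\pm e_i$ of the unit ball. I do not anticipate any serious obstacle here: the substantive symmetry analysis has already been carried out at the level of $\ell_{\infty}^n(\mathbb{X})$ in Corollary \ref{directsum2}, and the present corollary is obtained simply by pulling that result back through $\phi$, in the same manner as Corollary \ref{cor1}.
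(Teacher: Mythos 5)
Your proposal is correct and is essentially the paper's own argument: the paper omits the proof, remarking only that it follows from Corollary \ref{directsum2} by transporting through the isometric isomorphism $\phi(T)=(Te_1,\ldots,Te_n)$ exactly as in Corollary \ref{cor1}, which is precisely your route. Your additional verification that $\|T\|=\max_i\|Te_i\|$, so that $e_i\notin M_T$ is equivalent to $\|Te_i\|<1$, is the small bookkeeping step the paper leaves implicit, and it is carried out correctly.
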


		\begin{cor}
				Let $\mathbb{X}$ be a Banach space. Suppose $T \in S_{\mathbb{L}(\ell_{1}^n , \mathbb{X})}$	 is a $\rho$-right  symmetric. Then the following holds: 
				\begin{itemize}
						\item[(i)] $T(e_k) \neq 0, ~\forall k \in \{1, 2, \ldots, n\}$
					\item[(ii)] $\|T(e_i)\|=\|T(e_j)\|$ implies that $\|T(e_i)\|=\|T(e_j)\|=1$
					\item[(iii)] $\|T(e_i)\|=1$ implies that $T(e_i)$ is $\rho$-right symmetric.\\
				\end{itemize}
		
		\end{cor}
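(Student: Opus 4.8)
The plan is to transport the necessary conditions already established for $\ell_{\infty}^n(\mathbb{X})$ in Corollary \ref{directsum2}(ii) over to the operator space $\mathbb{L}(\ell_1^n, \mathbb{X})$ through the canonical isometric isomorphism, exactly in the spirit of the proof of Corollary \ref{cor1}. First I would invoke Proposition \ref{prop:isometric}(i), which tells us that $\mathbb{L}(\ell_1^n, \mathbb{X})$ is isometrically isomorphic to $\ell_{\infty}^n(\mathbb{X})$, and record that the relevant isomorphism is the explicit map $\phi : \mathbb{L}(\ell_1^n, \mathbb{X}) \to \ell_{\infty}^n(\mathbb{X})$ given by $\phi(T) = (Te_1, Te_2, \ldots, Te_n)$, so that $\phi$ identifies the coordinate $x_k$ of a point in $\ell_{\infty}^n(\mathbb{X})$ with $Te_k$.

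Next I would observe that $\rho$-right symmetricity is preserved under any isometric isomorphism. This is because $\rho$-orthogonality is defined purely through the norm, via the norm derivatives $\rho'_{\pm}$; hence an isometric isomorphism carries $\perp_{\rho}$ to $\perp_{\rho}$ and therefore maps $\rho$-right symmetric points to $\rho$-right symmetric points. Since $T \in S_{\mathbb{L}(\ell_1^n, \mathbb{X})}$ is assumed $\rho$-right symmetric, it follows that $\phi(T) = (Te_1, \ldots, Te_n)$ is a $\rho$-right symmetric point of $S_{\ell_{\infty}^n(\mathbb{X})}$.

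Finally I would apply Corollary \ref{directsum2}(ii) to the point $\widetilde{x} = (Te_1, \ldots, Te_n)$, reading off the three conclusions with $x_k = Te_k$: part (a) gives $Te_k \neq 0$ for every $k$, which is (i); part (b) gives that $\|Te_i\| = \|Te_j\|$ forces $\|Te_i\| = \|Te_j\| = 1$, which is (ii); and part (c) gives that $\|Te_i\| = 1$ forces $Te_i$ to be $\rho$-right symmetric, which is (iii). I do not expect any genuine obstacle here, since the deep content has already been absorbed into Corollary \ref{directsum2}; the only point deserving an explicit (routine) remark is the invariance of $\rho$-right symmetricity under isometric isomorphism, which the preceding corollaries have been using implicitly and which follows at once from the norm-theoretic definition of $\perp_{\rho}$.
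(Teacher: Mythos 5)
Your proposal is correct and is exactly the paper's argument: the paper omits the proof, noting it follows as in Corollary \ref{cor1} by transporting Corollary \ref{directsum2}(ii) through the isometric isomorphism $\phi(T)=(Te_1,\ldots,Te_n)$ of Proposition \ref{prop:isometric}(i), together with the (implicitly used) invariance of $\rho$-right symmetricity under isometric isomorphism that you make explicit.
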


We next study $(\rho_{+})$-left (right) symmetric points, $(\rho_{-})$-left (right) symmetric points and $ \rho$-symmetric points in the space $\mathbb{K}( \mathbb{X}, C(K))$. 
		
		\begin{cor}\label{left1}
			Let $K$ be a compact, perfectly normal space and  let  $T \in S_{\mathbb{K}(\mathbb{X}, C(K))}.$ Then   
			\begin{itemize}
				\item[(i)] $T$ is ($\rho_{+}$)-left symmetric if and only if there exists exactly one point $k_0 \in K$ such that $T^*(\delta_{k_0}) \in S_{\mathbb{X}}$ is a $(\rho_{+})$-left symmetric point in $\mathbb{X}^*$ and $T^*(\delta_{k})=0,$  $\forall k \in K \setminus \{k_0\}.$
				
				\item[(ii)] $T$ is ($\rho_{-}$)-left symmetric if and only if there exists exactly one point $k_0 \in K$ such that $T^*(\delta_{k_0}) \in S_{\mathbb{X}}$ is a $(\rho_{-})$-left symmetric point in $\mathbb{X}^*$ and $T^*(\delta_{k})=0,$  $\forall k \in K \setminus \{k_0\},$
			\end{itemize} 
			where $\delta_k \in (C(K))^*$ such that  $\delta_k(f)=f(k),$ for any $f \in C(K).$
			Moreover, if $K$ does not contain any isolated point then $T$ is $(\rho_{\pm})$-left symmetric if and only $T$ is the zero operator. 
		\end{cor}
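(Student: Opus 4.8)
The plan is to transport the problem to the function space $C(K, \mathbb{X}^*)$ via the isometric isomorphism of Proposition \ref{prop:isometric}(ii) and then read off the conclusion from the characterizations of $(\rho_+)$- and $(\rho_-)$-left symmetric functions already established in Theorems \ref{left:continuous} and \ref{left2:continuous}. Concretely, I would fix the isometric isomorphism $\Phi : \mathbb{K}(\mathbb{X}, C(K)) \to C(K, \mathbb{X}^*)$ given by $\Phi(T) = f_T$, where $f_T(k) = T^*(\delta_k)$ for every $k \in K$. The identity $(Tx)(k) = \delta_k(Tx) = [T^*(\delta_k)](x) = f_T(k)(x)$ exhibits this as the natural identification, and compactness of $T$ guarantees that $k \mapsto T^*(\delta_k)$ is norm continuous, so that indeed $f_T \in C(K, \mathbb{X}^*)$ with $\|f_T\| = \|T\|$.

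First I would record that a surjective linear isometry preserves the norm derivatives $\rho'_+$ and $\rho'_-$, hence preserves $\perp_{\rho_+}$ and $\perp_{\rho_-}$, and therefore carries $(\rho_\pm)$-left symmetric points to $(\rho_\pm)$-left symmetric points. Consequently, for $T \in S_{\mathbb{K}(\mathbb{X}, C(K))}$, the operator $T$ is $(\rho_+)$-left symmetric if and only if $f_T \in S_{C(K, \mathbb{X}^*)}$ is $(\rho_+)$-left symmetric. Applying Theorem \ref{left:continuous} with the underlying Banach space taken to be $\mathbb{X}^*$, this happens precisely when there is a point $k_0 \in K$ with $f_T(k_0) \in S_{\mathbb{X}^*}$, with $f_T(k) = 0$ for all $k \neq k_0$, and with $f_T(k_0)$ a $(\rho_+)$-left symmetric point of $\mathbb{X}^*$. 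Rewriting $f_T(k) = T^*(\delta_k)$ yields exactly statement (i); uniqueness of $k_0$ is automatic, since $\|f_T\| = 1$ forces $M_{f_T} = \{k_0\}$. Part (ii) follows verbatim, using Theorem \ref{left2:continuous} in place of Theorem \ref{left:continuous}.

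For the final assertion I would drop the normalization and exploit the homogeneity of $(\rho_\pm)$-orthogonality: a nonzero $(\rho_\pm)$-left symmetric operator $T$ would, after rescaling to $T/\|T\|$, produce via the above a continuous function $f_T$ vanishing off a single point $k_0$ and satisfying $\|f_T(k_0)\| = 1$. If $K$ has no isolated point, then $k_0$ is a limit of a net of points $k \neq k_0$, along which $f_T(k) = 0$; continuity of $f_T$ at $k_0$ then forces $f_T(k_0) = 0$, contradicting $\|f_T(k_0)\| = 1$. Hence the only $(\rho_\pm)$-left symmetric operator is $T = 0$, which is trivially left symmetric, giving the stated equivalence. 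The substantive content here is entirely supplied by Theorems \ref{left:continuous} and \ref{left2:continuous}; once the model $C(K, \mathbb{X}^*)$ is in place the corollary is a translation, so the step needing the most care — and the one I would write out in detail — is verifying that the identification $T \leftrightarrow f_T$ genuinely intertwines the two $\rho$-orthogonality structures and that the distinguished point is described by $T^*(\delta_{k_0})$, which is a matter of unwinding the definitions of $\Phi$ and of $\delta_k$ together with the isometric invariance of the norm derivatives.
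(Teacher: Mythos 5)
Your proposal is correct and follows essentially the same route as the paper: the paper's proof also transports the problem through the isometric isomorphism $\phi(T)(k) = T^*(\delta_k)$ of Proposition \ref{prop:isometric}(ii) and then invokes Theorem \ref{left:continuous} (resp.\ Theorem \ref{left2:continuous}), leaving the remaining verifications implicit. Your write-up simply makes explicit the details the paper compresses into ``the result follows easily'' — the weak*-to-norm continuity of $k \mapsto T^*(\delta_k)$ coming from compactness of $T$, the invariance of $\rho_\pm$-orthogonality under surjective isometries, and the continuity argument settling the isolated-point assertion.
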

		
	\begin{proof}
		We only prove (i) as (ii) follows similarly.	From Proposition \ref{prop:isometric}(ii), $\mathbb{K}( \mathbb{X}, C(K))$ is isometrically isomorphic to $ C(K, \mathbb{X}^*).$ Note that $\phi: \mathbb{K}( \mathbb{X}, C(K)) \to C(K, \mathbb{X}^*)$ given by  $\phi(T)(k)= T^*(\delta_{k})$ is the isometric isomorphism ( see \cite{DS}).  Hence applying Theorem \ref{left:continuous}, the result follows easily. 
	\end{proof}

			\begin{cor}
				Let $K$ be a compact, perfectly normal space and let $T \in S_{\mathbb{K}(\mathbb{X}, C(K))}.$ Then  
			\begin{itemize}
				
				\item [(i)]    $T$ is ($\rho_{+}$)-right symmetric if and only if  $T$ satisfies  exactly one of following: 
				\begin{itemize}
						\item[(a)] If $M_{T^*}= \{ \pm \delta_{k_0} \}, $ for some $ k_0 \in K$ then $T^*(\delta_{k_0})$ is a $(\rho_{+})$-right symmetric point in $\mathbb{X}^*$ and  for any $k \in \{1, 2, \ldots, n\} \setminus \{k_0\},$  $w \perp_{\rho_+} T^*(\delta_k) \implies w=0.$
					
					\item [(b)] If $\delta_{i}, \delta_{j} \in M_{T^*}$ for some $i, j \in K$ then for any $k \in K$ $w \perp_{\rho_\pm} T^*(\delta_k) \implies w=0.$
				\end{itemize}

					\item [(ii)]    $T$ is ($\rho_{-}$)-right symmetric if and only if  $T$ satisfies  exactly one of following: 
					\begin{itemize}
						\item[(a)] If $M_{T^*}= \{ \pm \delta_{k_0} \}, $ for some $ k_0 \in K$ then $T^*(\delta_{k_0})$ is a $(\rho_{-})$-right symmetric point in $\mathbb{X}^*$ and  for any $k \in \{1, 2, \ldots, n\} \setminus \{k_0\},$  $w \perp_{\rho_-} T^*(\delta_k) \implies w=0.$
					
					\item [(b)] If $\delta_{i}, \delta_{j} \in M_{T^*}$ for some $i, j \in K$ then for any $k \in K$ $w \perp_{\rho_\pm} T^*(\delta_k) \implies w=0.$
					\end{itemize}
			
			\end{itemize}
		\end{cor}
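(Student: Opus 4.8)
The strategy is to transport the problem to $C(K,\mathbb{X}^*)$, where $(\rho_+)$- and $(\rho_-)$-right symmetricity have already been characterized in Theorems \ref{right:continuous} and \ref{right2:continuous}. By Proposition \ref{prop:isometric}(ii), the map $\phi:\mathbb{K}(\mathbb{X},C(K))\to C(K,\mathbb{X}^*)$ defined by $\phi(T)(k)=T^*(\delta_k)$ for all $k\in K$ is an onto isometric isomorphism (see \cite{DS}). Since the norm derivatives $\rho'_{\pm}$ depend only on the norm, every isometric isomorphism preserves $(\rho_+)$- and $(\rho_-)$-orthogonality, and hence preserves the corresponding right symmetricity. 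Writing $f_T:=\phi(T)\in S_{C(K,\mathbb{X}^*)}$, so that $f_T(k)=T^*(\delta_k)$, it therefore suffices to characterize when $f_T$ is $(\rho_+)$-right symmetric (for part (i)) and $(\rho_-)$-right symmetric (for part (ii)) in $C(K,\mathbb{X}^*)$, and then read the conditions back through $\phi$, exactly as was done for the left symmetric case in Corollary \ref{left1}.

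For part (i) I would apply Theorem \ref{right:continuous} to $f_T$ with $\mathbb{X}^*$ in the role of $\mathbb{X}$, and for part (ii) Theorem \ref{right2:continuous}. The translation is then immediate from $f_T(k)=T^*(\delta_k)$: the condition ``$w\perp_{\rho_+}f_T(k)\implies w=0$'' becomes ``$w\perp_{\rho_+}T^*(\delta_k)\implies w=0$'', and the requirement that $f_T(k_0)$ be $(\rho_+)$-right symmetric in $\mathbb{X}^*$ becomes the requirement that $T^*(\delta_{k_0})$ be $(\rho_+)$-right symmetric; the $(\rho_-)$-statements are analogous.

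The only step needing genuine justification is the identification of the norm attainment set $M_{f_T}$ appearing in Theorems \ref{right:continuous} and \ref{right2:continuous} with the set $M_{T^*}$ appearing in the statement. First, $\|f_T\|=\sup_{k\in K}\|T^*(\delta_k)\|=\|T^*\|=\|T\|=1$, using $Ext(B_{(C(K))^*})=\{\pm\delta_k:k\in K\}$ together with the fact that the convex function $\mu\mapsto\|T^*\mu\|$ attains its supremum over $B_{(C(K))^*}$ at an extreme point. Next, for any $\mu\in B_{(C(K))^*}$ the pointwise estimate $|T^*\mu(x)|\leq\int_K|(Tx)(k)|\,d|\mu|(k)\leq\|x\|\int_K\|f_T(k)\|\,d|\mu|(k)$ yields $\|T^*\mu\|\leq\int_K\|f_T(k)\|\,d|\mu|(k)$. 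Hence if $\mu\in M_{T^*}$ then $\int_K(1-\|f_T(k)\|)\,d|\mu|(k)=0$, which forces $|\mu|$ to be supported on the closed set $M_{f_T}=\{k\in K:\|f_T(k)\|=1\}$. Consequently $M_{T^*}=\{\pm\delta_{k_0}\}$ precisely when $M_{f_T}=\{k_0\}$ (matching case (a)), while $M_{f_T}$ fails to be a singleton precisely when there exist $i\neq j$ with $\delta_i,\delta_j\in M_{T^*}$ (matching case (b)). With this dictionary in place, parts (i) and (ii) become the images under $\phi$ of Theorems \ref{right:continuous} and \ref{right2:continuous}.

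I expect this measure-theoretic identification of $M_{T^*}$ with $M_{f_T}$ to be the main obstacle, since it is the only point beyond a formal transport of structure along $\phi$: one must rule out $T^*$ attaining its norm at a measure whose support leaks outside $M_{f_T}$, which is exactly what the support argument above does. Everything else is the verbatim restatement of the two function-space theorems, so once the singleton/non-singleton dichotomy is matched to cases (a) and (b), the proof is complete.
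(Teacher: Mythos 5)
Your proposal is correct and follows essentially the same route as the paper, which proves this corollary by transporting $T$ to $f_T=\phi(T)\in C(K,\mathbb{X}^*)$ via the isometric isomorphism $\phi(T)(k)=T^*(\delta_k)$ of Proposition \ref{prop:isometric}(ii) and then invoking Theorems \ref{right:continuous} and \ref{right2:continuous} (the paper omits all details, saying only that the argument is as in Corollary \ref{left1}). Your measure-theoretic support argument identifying $M_{T^*}$ with $\{\pm\delta_k : k\in M_{f_T}\}$ is precisely the step the paper leaves implicit, and it is a correct and worthwhile addition.
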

		
		\begin{proof}
			The proof is in the same line as that of Proposition \ref{left1}, and is therefore omitted. 
		\end{proof}

		\begin{cor}
				Let $K$ be a compact, perfectly normal space. Let  $T \in S_{\mathbb{K}(\mathbb{X}, C(K))}$  be $\rho$-left symmetric. Then   $T^*(\delta_{k})=0$, for any $\delta_k \notin M_{T^*}.$
			\end{cor}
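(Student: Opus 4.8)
The plan is to transport the statement across the isometric isomorphism of Proposition \ref{prop:isometric}(ii) and then read off the conclusion from Theorem \ref{left:rho:continuous}, exactly as in the earlier corollaries of this section. First I would recall that $\phi \colon \mathbb{K}(\mathbb{X}, C(K)) \to C(K, \mathbb{X}^*)$, defined by $\phi(T)(k) = T^*(\delta_k)$ for all $k \in K$, is a surjective linear isometry (see \cite{DS}). The key structural observation is that $\rho$-orthogonality is an isometric invariant: since $\rho'_+$ and $\rho'_-$ are defined purely in terms of the norm, any surjective linear isometry $\phi$ satisfies $\rho'_\pm(\phi(u), \phi(v)) = \rho'_\pm(u, v)$, and hence $u \perp_\rho v$ if and only if $\phi(u) \perp_\rho \phi(v)$. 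Consequently $\rho$-left symmetricity is preserved by $\phi$, so $f := \phi(T)$ is a $\rho$-left symmetric element of $S_{C(K, \mathbb{X}^*)}$.

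Next I would apply Theorem \ref{left:rho:continuous} (with the target space $\mathbb{X}^*$ in place of $\mathbb{X}$) to the $\rho$-left symmetric function $f$: it yields $f(k) = 0$ for every $k \notin M_f$. It then remains only to identify the norm attainment set $M_f \subseteq K$ with the set $\{k \in K : \delta_k \in M_{T^*}\}$. Since $\|f(k)\| = \|T^*(\delta_k)\|$ and $\|T\| = \|T^*\| = \|f\| = 1$, while each $\delta_k$ lies on $S_{(C(K))^*}$, we have $k \in M_f \iff \|T^*(\delta_k)\| = 1 = \|T^*\| \iff \delta_k \in M_{T^*}$. Therefore $\delta_k \notin M_{T^*}$ forces $k \notin M_f$, whence $T^*(\delta_k) = f(k) = 0$, which is the desired conclusion.

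I expect no serious obstacle here, as this is a direct corollary; the only point requiring a little care is the bookkeeping that matches $M_f$ with the norm attainment set $M_{T^*}$ through the evaluation functionals $\delta_k$, together with the verification that $\rho$-orthogonality genuinely transfers across the isometry so that $\rho$-left symmetricity of $T$ passes to $\phi(T)$.
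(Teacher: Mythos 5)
Your proposal is correct and is essentially the paper's intended argument: the paper omits an explicit proof of this corollary precisely because it follows, as in the preceding corollaries of that section, by transporting $T$ through the isometric isomorphism $\phi(T)(k)=T^*(\delta_k)$ of Proposition \ref{prop:isometric}(ii) and then invoking Theorem \ref{left:rho:continuous}. Your additional care in checking that $\rho$-left symmetricity passes through the surjective isometry and that $M_{\phi(T)}=\{k\in K:\delta_k\in M_{T^*}\}$ is exactly the bookkeeping the paper leaves implicit.
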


			\begin{cor}
			Let $K$ be a compact, perfectly normal space and let  $T \in S_{\mathbb{K}(\mathbb{X}, C(K))}.$ If  $T$ is $\rho$-right symmetric then the following holds:
			\begin{itemize}
				\item[(i)] $T^*(\delta_{k}) \neq 0, \forall k \in K.$
				\item[(ii)] $\|T^*(\delta_{i})\|=\|T^*(\delta_{j})\|$ implies that $\|T^*(\delta_{i})\|=\|T^*(\delta_{j})\|=1.$
				\item[(iii)] $\|T^*(\delta_{i})\|=1$ implies that $T^*(\delta_{i})$ is $\rho$-right symmetric.\\
		\end{itemize}
		\end{cor}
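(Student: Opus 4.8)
The plan is to reduce the statement to Theorem \ref{right:rho:continuous} by transporting the problem into the function space $C(K, \mathbb{X}^*)$. Recall from Proposition \ref{prop:isometric}(ii) that $\mathbb{K}(\mathbb{X}, C(K))$ is isometrically isomorphic to $C(K, \mathbb{X}^*)$ via the map $\phi : \mathbb{K}(\mathbb{X}, C(K)) \to C(K, \mathbb{X}^*)$ given by $\phi(T)(k) = T^*(\delta_k)$, the same isomorphism used in the proof of Corollary \ref{left1}. Writing $f_T := \phi(T)$, we have $f_T(k) = T^*(\delta_k)$ for every $k \in K$, and $\|f_T\| = \|T\| = 1$ since $\phi$ is an isometry.

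First I would observe that $\rho$-right symmetricity is preserved under isometric isomorphisms. Indeed, the norm derivatives depend only on the norm, so for an isometric isomorphism $\phi$ one has $\|\phi(x) + t\,\phi(y)\| = \|x + ty\|$ and hence $\rho'_{\pm}(\phi(x), \phi(y)) = \rho'_{\pm}(x, y)$ for all $x, y$; consequently $\phi(x) \perp_{\rho} \phi(y)$ if and only if $x \perp_{\rho} y$, so $\rho$-right symmetricity transfers along $\phi$. Since $T$ is $\rho$-right symmetric in $\mathbb{K}(\mathbb{X}, C(K))$, it follows that $f_T$ is $\rho$-right symmetric in $C(K, \mathbb{X}^*)$.

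Now I would apply Theorem \ref{right:rho:continuous} to the element $f_T \in S_{C(K, \mathbb{X}^*)}$, taking the Banach space there to be $\mathbb{X}^*$ in place of $\mathbb{X}$; this is legitimate, as $\mathbb{X}^*$ is itself a Banach space and $K$ is compact and perfectly normal. The theorem yields exactly the three conclusions: $f_T(k) \neq 0$ for all $k \in K$; the equality $\|f_T(k_1)\| = \|f_T(k_2)\|$ forces $\|f_T(k_1)\| = \|f_T(k_2)\| = 1$; and $\|f_T(k)\| = 1$ implies that $f_T(k)$ is $\rho$-right symmetric in $\mathbb{X}^*$. Substituting $f_T(k) = T^*(\delta_k)$ gives precisely conditions (i), (ii) and (iii) of the corollary.

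The argument presents no serious obstacle; the only point requiring care is the verification that $\phi$ genuinely preserves $\rho$-right symmetricity, which rests on the fact that $\rho$-orthogonality is defined purely through the norm. This transport-of-structure step is entirely analogous to the preservation of $(\rho_{\pm})$-left symmetricity invoked in the proofs of Corollaries \ref{cor1} and \ref{left1}, and so it may be stated briefly.
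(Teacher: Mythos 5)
Your proposal is correct and is precisely the argument the paper intends: the paper omits the proof of this corollary, but following the pattern of Corollary \ref{left1} it reduces to transporting $\rho$-right symmetricity along the isometric isomorphism $\phi(T)(k)=T^*(\delta_k)$ of Proposition \ref{prop:isometric}(ii) and then invoking Theorem \ref{right:rho:continuous} for $C(K,\mathbb{X}^*)$. Your explicit verification that isometric isomorphisms preserve the norm derivatives, and hence $\rho$-right symmetricity, is exactly the (tacit) justification the paper relies on.
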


Finally we  study  $(\rho_{+})$-left (right) symmetric points, $(\rho_{-})$-left (right) symmetric points and $ \rho$-symmetric points in the space 
		$\mathbb{L}( \mathbb{X}, \ell_{\infty}^n)$.
		\begin{cor}
			Let  $T \in S_{\mathbb{L}( \mathbb{X}, \ell_{\infty}^n)}. $ Then 
			\begin{itemize}
				\item[(i)] $T$ is  $(\rho_{+})$-left symmetric  if and only  there exists $j \in \{1, 2, \ldots, n\}$  such that for any $x \in \mathbb{X},$ $Tx=(0, \ldots, 0, \underset{j\text{-th}}{f(x)}, 0, \ldots, 0),$  where $f \in S_{\mathbb{X}^*}$ is a  $(\rho_+)$-left symmetric point of $ \mathbb{X}^*$.
				
				\item[(ii)] $T$ is  $(\rho_{-})$-left symmetric  if and only  there exists $j \in \{1, 2, \ldots, n\}$  such that for any $x \in \mathbb{X},$ $Tx=(0, \ldots, 0, \underset{j\text{-th}}{f(x)}, 0, \ldots, 0),$  where $f \in S_{\mathbb{X}^*}$ is a  $(\rho_-)$-left symmetric point of $ \mathbb{X}^*$.
			\end{itemize}
		\end{cor}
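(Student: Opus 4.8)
The plan is to reduce the statement to the already-established characterization of $(\rho_\pm)$-left symmetric points of $\ell_{\infty}^n$-type spaces in Corollary \ref{directsum}, by transporting the problem through the isometric isomorphism of Proposition \ref{prop:isometric}(iii). This is entirely parallel to the proof of Corollary \ref{cor1}, so I would model the argument on that one.

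First I would make the isomorphism $\phi : \mathbb{L}(\mathbb{X}, \ell_{\infty}^n) \to \ell_{\infty}^n(\mathbb{X}^*)$ explicit. For $T \in \mathbb{L}(\mathbb{X}, \ell_{\infty}^n)$, writing $\pi_i$ for the $i$-th coordinate projection on $\ell_{\infty}^n$, set $f_i = \pi_i \circ T \in \mathbb{X}^*$, so that $Tx = (f_1(x), \ldots, f_n(x))$ for every $x \in \mathbb{X}$, and define $\phi(T) = (f_1, \ldots, f_n)$. Since $\|Tx\| = \max_i |f_i(x)|$, a short computation gives $\|T\| = \max_i \|f_i\|$, so that $\phi$ is a surjective linear isometry; this is precisely the identification underlying Proposition \ref{prop:isometric}(iii).

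Next I would invoke the fact that $(\rho_+)$- and $(\rho_-)$-orthogonality are defined purely in terms of the norm, via the one-sided norm derivatives $\rho'_{\pm}$, and hence are preserved by any surjective linear isometry; consequently $(\rho_{\pm})$-left symmetricity transfers along $\phi$. Thus $T$ is $(\rho_+)$-left symmetric in $\mathbb{L}(\mathbb{X}, \ell_{\infty}^n)$ if and only if $\phi(T) = (f_1, \ldots, f_n)$ is $(\rho_+)$-left symmetric in $\ell_{\infty}^n(\mathbb{X}^*)$, and similarly for $(\rho_-)$.

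Finally I would apply Corollary \ref{directsum}(i) for $\rho_+$ and Corollary \ref{directsum}(iii) for $\rho_-$, taking the underlying Banach space there to be $\mathbb{X}^*$. These give that $(f_1, \ldots, f_n)$ is $(\rho_{\pm})$-left symmetric precisely when there is a single index $j$ with $\|f_j\| = 1$, with $f_i = 0$ for all $i \neq j$, and with $f_j$ a $(\rho_{\pm})$-left symmetric point of $\mathbb{X}^*$. Reading the condition $f_i = 0$ $(i \neq j)$ back through $\phi$ says exactly that $Tx = (0, \ldots, 0, f_j(x), 0, \ldots, 0)$ with $f_j(x)$ in the $j$-th slot, which is the asserted form with $f = f_j$. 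Since every step is an \emph{if and only if}, both directions follow at once. There is no deep obstacle here; the only point requiring care is the bookkeeping in this last step, namely verifying that the distinguished component $x_j$ appearing in Corollary \ref{directsum} corresponds under $\phi$ to the functional $f_j = \pi_j \circ T$, so that the index $j$ and the left symmetric point are matched up correctly.
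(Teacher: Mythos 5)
Your proposal is correct and follows essentially the same route as the paper: both transport the problem through the isometric isomorphism $\phi : \mathbb{L}(\mathbb{X}, \ell_{\infty}^n) \to \ell_{\infty}^n(\mathbb{X}^*)$ of Proposition \ref{prop:isometric}(iii) (your $f_i = \pi_i \circ T$ is exactly the paper's $T^*(e_i)$), invoke the preservation of $(\rho_{\pm})$-left symmetricity under isometric isomorphism, and then apply Corollary \ref{directsum}(i) and (iii) with underlying space $\mathbb{X}^*$ before translating the conclusion back to the coordinate form of $Tx$. The paper's proof is exactly this bookkeeping, modeled, as you anticipated, on Corollary \ref{cor1}.
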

		
		\begin{proof}
		We only prove (i) as (ii) follows similarly.	Observe that $Ext(B_{(\ell_{\infty}^n)^*})= \{\pm e_i:  1 \leq i \leq n\},$ where $e_i(x)=x_i,$ for any $x=(x_1, x_2, \ldots, x_n)\in \ell_{\infty}^n.$	From Proposition \ref{prop:isometric}, $\mathbb{L}(\mathbb{X}, \ell_{\infty}^n)$ is isometrically isomorphic to $\ell_{\infty}^n(\mathbb{X}^*).$ Moreover, if $\phi$ is the isometric isomorphism between $\mathbb{L}(\mathbb{X}, \ell_{\infty}^n)$ and    $\ell_{\infty}^n(\mathbb{X}^*),$ then $\phi(T)=(T^*e_1, T^* e_2, \ldots, T^* e_n).$ 
			
			From Corollary \ref{directsum}, we obtain that $\phi(T)$ is $(\rho_{+})$-left symmetric if and only if there exists $j \in \{1, 2, \ldots, n\} $ such that $T^*(e_{j}) \in S_{\mathbb{X}^*}$ is a $(\rho_{+})$-left symmetric point and $T^*(e_i)=0,$ for any $i\in \{1,2,\ldots,n\} \setminus \{ j\}.$  Suppose that  $T^*(e_{j})=f \in S_{\mathbb{X}^*}.$
			Let for any $x \in \mathbb{X},$ $Tx= (u_1(x), u_2(x), \ldots, u_n(x)),$ where $u_i \in \mathbb{X}^*.$ Then $u_{j}(x)=e_{j}(Tx)= T^*(e_{j})(x)=f(x)$ and $u_i(x)=e_i(Tx)= T^*(e_i)(x)= 0,$ whenever $i \neq j.$ Thus we obtain (i).
		\end{proof}

		\begin{cor}
			Let  $T \in S_{\mathbb{L}(\mathbb{X}, \ell_{\infty}^n)}.$ Then 
			\begin{itemize}
				\item[(i)] $T$  is a $(\rho_{+})$-right symmetric if and only if
				for any $x \in \mathbb{X},$ $Tx =(f_1(x), f_2(x), \ldots, f_n(x)),$ where $f_i \in \mathbb{X}^*$ such that 
				$f_i$ satisfies    exactly one of following: 
				\begin{itemize}
						\item [(a)] If $\|f_{i_0}\|=1, \|f_j\|<1, \forall j \neq i_0 $  then $f_{i_0}$ is a $(\rho_{+})$-right symmetric element in $\mathbb{X}^*$ and  $g \perp_{\rho_+} f_k \implies g=0, ~  \forall k \in \{1, 2, \ldots, n\} \setminus \{i_0\},$
					
					\item[(b)] If $\|f_i\| = \|f_j\| =1 $ for some $i, j \in \{1, 2, \ldots, n\}$ then for any $k \in \{1, 2, \ldots, n\}$  $ g \perp_{\rho_+} f_k \implies g=0.$
				\end{itemize}

					\item[(ii)] $T$  is a $(\rho_{-})$-right symmetric if and only if
				for any $x \in \mathbb{X},$ $Tx =(f_1(x), f_2(x), \ldots, f_n(x)),$ where $f_i \in \mathbb{X}^*$ such that 
				$f_i$ satisfies    exactly one of following:
				\begin{itemize}
						\item [(a)] If $\|f_{i_0}\|=1, \|f_j\|<1, \forall j \neq i_0 $  then $f_{i_0}$ is a $(\rho_{\pm})$-right symmetric element in $\mathbb{X}^*$ and $g \perp_{\rho_-} f_k \implies g=0, ~  \forall k \in \{1, 2, \ldots, n\} \setminus \{i_0\},$
					
					\item[(b)] If $\|f_i\| = \|f_j\| =1 $ for some $i, j \in \{1, 2, \ldots, n\}$ then for any $k \in \{1, 2, \ldots, n\}$  $ g \perp_{\rho_\pm} f_k \implies g=0.$
				\end{itemize} 
			
			\end{itemize}
				\end{cor}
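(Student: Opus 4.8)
The plan is to reduce the statement to Corollary \ref{directsum}(ii) and (iv) by transporting everything through the isometric isomorphism of Proposition \ref{prop:isometric}(iii). I would prove only (i), since (ii) follows by the identical argument with $\rho_+$ replaced by $\rho_-$ throughout and Corollary \ref{directsum}(iv) used in place of (ii).

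First I would record the extreme point structure of the dual, namely $Ext(B_{(\ell_{\infty}^n)^*}) = \{\pm e_i : 1 \leq i \leq n\}$, where $e_i(x) = x_i$ for $x = (x_1, \ldots, x_n) \in \ell_{\infty}^n$. By Proposition \ref{prop:isometric}(iii), $\mathbb{L}(\mathbb{X}, \ell_{\infty}^n)$ is isometrically isomorphic to $\ell_{\infty}^n(\mathbb{X}^*)$, and the isomorphism $\phi$ is given by $\phi(T) = (T^* e_1, T^* e_2, \ldots, T^* e_n)$. Writing $Tx = (f_1(x), \ldots, f_n(x))$ with $f_i \in \mathbb{X}^*$, I would observe that $T^* e_i(x) = e_i(Tx) = f_i(x)$, so that $f_i = T^* e_i$ and hence $\phi(T) = (f_1, \ldots, f_n) \in S_{\ell_{\infty}^n(\mathbb{X}^*)}$.

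Next, since right symmetricity with respect to $\rho_+$-orthogonality is preserved under isometric isomorphisms (the norm derivatives $\rho'_{\pm}$ depend only on the norm and are therefore invariant), $T$ is $(\rho_+)$-right symmetric in $\mathbb{L}(\mathbb{X}, \ell_{\infty}^n)$ if and only if $\phi(T) = (f_1, \ldots, f_n)$ is $(\rho_+)$-right symmetric in $\ell_{\infty}^n(\mathbb{X}^*)$. Applying Corollary \ref{directsum}(ii) with the ground space $\mathbb{X}$ replaced by the Banach space $\mathbb{X}^*$ then yields precisely the dichotomy (a)--(b): in case (a) exactly one coordinate $f_{i_0}$ has norm one with all others of strictly smaller norm, $f_{i_0}$ must be $(\rho_+)$-right symmetric in $\mathbb{X}^*$, and no nonzero element of $\mathbb{X}^*$ is $\rho_+$-orthogonal to any other coordinate; in case (b) at least two coordinates attain norm one, and no nonzero element is $\rho_+$-orthogonal to any coordinate. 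The translation of the clause ``there exists no $w \in S_{\mathbb{X}^*}$ with $w \perp_{\rho_+} f_k$'' into ``$g \perp_{\rho_+} f_k \implies g = 0$'' is immediate from the homogeneity of $\rho_+$-orthogonality.

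There is no substantial obstacle here; the only points requiring care are the identification $f_i = T^* e_i$ and the fact that the extreme points of $B_{(\ell_{\infty}^n)^*}$ are exactly $\pm e_i$, so that Corollary \ref{directsum} transfers verbatim after the change of ground space. Accordingly the argument is brief and runs in exact parallel to the proof of Corollary \ref{cor1}, which is why it may reasonably be omitted just as the authors do for the neighbouring corollaries.
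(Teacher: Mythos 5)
Your proposal is correct and follows exactly the route the paper intends: the paper omits the proof of this corollary precisely because it runs parallel to the proofs of Corollary \ref{cor1} and the preceding left-symmetric corollary for $\mathbb{L}(\mathbb{X},\ell_{\infty}^n)$, namely transporting the problem through the isometry $\phi(T)=(T^*e_1,\ldots,T^*e_n)$ of Proposition \ref{prop:isometric}(iii), identifying $f_i=T^*e_i$, and invoking Corollary \ref{directsum}(ii) and (iv) with $\mathbb{X}^*$ as the ground space. Your added remarks on the preservation of right symmetricity under isometric isomorphism and the homogeneity-based translation of the orthogonality clause are exactly the implicit steps the paper relies on.
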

		
		\begin{cor}
			Let $\mathbb{X}$ be a Banach space. Suppose $T \in S_{\mathbb{L}( \mathbb{X}, \ell_{\infty}^n)}$	 is a $\rho$-left symmetric. Then for any $x \in \mathbb{X},$ $Tx =(f_1(x), f_2(x), \ldots, f_n(x)),$ where $f_i \in \mathbb{X}^*$ and  $\|f_i\| <  1 $ implies that $f_i=0$
				
			
		\end{cor}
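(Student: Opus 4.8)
The plan is to reduce the statement to Corollary \ref{directsum2}(i) through the isometric identification furnished by Proposition \ref{prop:isometric}(iii). First I would recall that $Ext(B_{(\ell_{\infty}^n)^*})=\{\pm e_i: 1\leq i\leq n\}$, where $e_i(x)=x_i$ for $x=(x_1,\ldots,x_n)\in\ell_{\infty}^n$, and that the isometric isomorphism $\phi:\mathbb{L}(\mathbb{X},\ell_{\infty}^n)\to \ell_{\infty}^n(\mathbb{X}^*)$ is given by $\phi(T)=(T^*e_1,T^*e_2,\ldots,T^*e_n)$. Writing $Tx=(f_1(x),\ldots,f_n(x))$ with $f_i\in\mathbb{X}^*$, one has $f_i(x)=e_i(Tx)=T^*(e_i)(x)$, so $f_i=T^*e_i$, and therefore $\phi(T)=(f_1,f_2,\ldots,f_n)$. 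Since $T\in S_{\mathbb{L}(\mathbb{X},\ell_{\infty}^n)}$ and $\phi$ is an isometry, $\phi(T)\in S_{\ell_{\infty}^n(\mathbb{X}^*)}$, so the hypothesis of Corollary \ref{directsum2} is met.

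The key observation is that $\rho$-orthogonality, being defined solely through the norm derivatives $\rho'_{\pm}$, is invariant under isometric isomorphisms: because $\phi$ preserves the norm, $\rho'_{\pm}(T,S)=\rho'_{\pm}(\phi(T),\phi(S))$ for all $T,S\in\mathbb{L}(\mathbb{X},\ell_{\infty}^n)$, and hence $T\perp_\rho S$ if and only if $\phi(T)\perp_\rho\phi(S)$. As $\phi$ is onto, $\rho$-left symmetricity transfers across $\phi$; thus the assumption that $T$ is $\rho$-left symmetric in $\mathbb{L}(\mathbb{X},\ell_{\infty}^n)$ is equivalent to $\phi(T)=(f_1,\ldots,f_n)$ being $\rho$-left symmetric in $\ell_{\infty}^n(\mathbb{X}^*)$.

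Finally I would apply Corollary \ref{directsum2}(i) to the $\rho$-left symmetric element $(f_1,\ldots,f_n)\in S_{\ell_{\infty}^n(\mathbb{X}^*)}$, which immediately gives that $\|f_i\|<1$ forces $f_i=0$ for each $i$; this is exactly the desired conclusion. I do not anticipate any genuine obstacle here, since the substantive work has already been carried out in Corollary \ref{directsum2} and Proposition \ref{prop:isometric}. The only point deserving an explicit line is the invariance of $\rho$-orthogonality under isometric isomorphisms, and that is itself immediate from the fact that the norm derivatives $\rho'_{\pm}$ depend only on the norm.
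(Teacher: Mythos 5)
Your proposal is correct and follows essentially the same route as the paper: the paper likewise passes through the isometric isomorphism $\phi(T)=(T^*e_1,\ldots,T^*e_n)$ of Proposition \ref{prop:isometric}(iii), notes that $\rho$-left symmetricity is preserved under isometric isomorphisms, and then invokes Corollary \ref{directsum2}(i). Your explicit remark that the invariance of $\rho$-orthogonality follows from the norm derivatives depending only on the norm is a point the paper leaves implicit, but it is the same argument.
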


		\begin{cor}
			Let $\mathbb{X}$ be a Banach space. Suppose  $T \in S_{\mathbb{L}( \mathbb{X}, \ell_{\infty}^n)}$	 is a $\rho$-right symmetric. Then for any $x \in \mathbb{X},$ $Tx =(f_1(x), f_2(x), \ldots, f_n(x)),$ where $f_i \in \mathbb{X}^*$  satisfies the        following:
			\begin{itemize}
				\item[(i)] $f_k \neq 0, \forall k \in \{1, 2, \ldots, n\}$
				\item[(ii)] $\| f_i\|=\|f_j\|$ implies that $\|f_i\|=\|f_j\|=1$
				\item[(iii)] $\|f_i\|=1$ implies that $f_i$ is $\rho$-right symmetric element of $\mathbb{X}^*$.
			\end{itemize}
			
		\end{cor}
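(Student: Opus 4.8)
The plan is to reduce the statement to the necessary conditions for $\rho$-right symmetric points of $\ell_{\infty}^n(\mathbb{X}^*)$ already recorded in Corollary \ref{directsum2}(ii), transporting everything through the isometric isomorphism of Proposition \ref{prop:isometric}(iii). This is exactly the strategy used in the preceding corollaries on $\mathbb{L}(\mathbb{X}, \ell_{\infty}^n)$, so the argument is short and essentially a translation.

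First I would set up the identification. Recalling that $Ext(B_{(\ell_{\infty}^n)^*}) = \{ \pm e_i : 1 \le i \le n\}$, where $e_i$ is the $i$-th coordinate functional $e_i(y) = y_i$, Proposition \ref{prop:isometric}(iii) supplies an isometric isomorphism $\phi : \mathbb{L}(\mathbb{X}, \ell_{\infty}^n) \to \ell_{\infty}^n(\mathbb{X}^*)$ with $\phi(T) = (T^* e_1, T^* e_2, \ldots, T^* e_n)$. Writing $f_i := T^* e_i \in \mathbb{X}^*$, one has $f_i(x) = e_i(Tx) = (Tx)_i$ for every $x \in \mathbb{X}$, so that $Tx = (f_1(x), \ldots, f_n(x))$ as claimed, and $\phi(T) = (f_1, \ldots, f_n)$.

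Next I would observe that $\rho$-right symmetricity is preserved under linear isometric isomorphisms. Indeed, the norm derivatives $\rho'_{\pm}$ depend only on the norm, so if $\phi$ is a linear isometry then $\rho'_{\pm}(\phi(u), \phi(v)) = \rho'_{\pm}(u, v)$ for all $u, v$; consequently $u \perp_{\rho} v$ if and only if $\phi(u) \perp_{\rho} \phi(v)$, and a point is $\rho$-right symmetric exactly when its image is. Hence $\phi(T) = (f_1, \ldots, f_n)$ is a $\rho$-right symmetric point of $\ell_{\infty}^n(\mathbb{X}^*)$.

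Finally I would apply Corollary \ref{directsum2}(ii), with the role of $\mathbb{X}$ there played by $\mathbb{X}^*$, to the point $(f_1, \ldots, f_n)$. This yields directly that $f_k \neq 0$ for all $k$, that $\|f_i\| = \|f_j\|$ forces $\|f_i\| = \|f_j\| = 1$, and that $\|f_i\| = 1$ implies $f_i$ is $\rho$-right symmetric in $\mathbb{X}^*$, which are precisely conditions (i)--(iii). There is no serious obstacle here: the only point requiring any care is the transfer of $\rho$-right symmetricity across $\phi$, and this is immediate from the norm-derivative definition, since $\phi$ preserves all relevant norms. The entire substance of the result is contained in Corollary \ref{directsum2}.
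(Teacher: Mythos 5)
Your proposal is correct and is exactly the argument the paper intends: the paper omits the proof of this corollary precisely because it follows, as you write, by transporting the operator through the isometric isomorphism $\phi(T)=(T^*e_1,\ldots,T^*e_n)$ of Proposition \ref{prop:isometric}(iii) (which preserves $\rho$-right symmetricity, since norm derivatives depend only on the norm) and then applying Corollary \ref{directsum2}(ii) with $\mathbb{X}^*$ in place of $\mathbb{X}$. No gaps; this matches the template used for the preceding corollaries on $\mathbb{L}(\mathbb{X},\ell_{\infty}^n)$.
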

	\noindent 	\textbf{Conflict of interest}\\
		The authors have no relevant financial or non-financial interests to disclose.
		The authors have no competing interests to declare that are relevant to the content of this article.

	\end{document}